\theoremstyle{plain}
\newtheorem{theorem}{Theorem}[section]
\newtheorem{lemma}[theorem]{Lemma}
\newtheorem{proposition}[theorem]{Proposition}
\theoremstyle{definition}
\newtheorem{definition}[theorem]{Definition}
\newtheorem{definition-theorem}[theorem]{Definition-Theorem}
\newtheorem{example}[theorem]{Example}
\theoremstyle{remark}
\newtheorem{remark}[theorem]{Remark}
\def\Aut{\mathrm{Aut}}
\def\GL{\mathrm{GL}}
\def\Im{\mathrm{Im}}
\def\Id{\mathrm{Id}}
\def\Lie{\mathrm{Lie}}
\def\cC{\mathcal{C}}
\def\cM{\mathcal{M}}
\def\cO{\mathcal{O}}
\def\cD{\mathcal{D}}
\def\cS{\mathcal{S}}
\def\cL{\mathcal{L}}
\def\cX{\mathcal{X}}
\def\cU{\mathcal{U}}
\def\cW{\mathcal{W}}
\def\cR{\mathcal{R}}
\def\FF{\mathbb{F}}
\def\CC{\mathbb{C}}
\def\PP{\mathbb{P}}
\def\cX{\mathcal{X}}
\def\cU{\mathcal{U}}
\def\delb{\overline\partial}
\def\K{\mathbb{K}}
\def\R{\mathbb{R}}
\def\Q{\mathbb{Q}}
\def\Z{\mathbb{Z}}
\def\N{\mathbb{N}}
\def\P{\mathbb{P}}
\def\C{\mathbb{C}}
\def\Om{\Omega}
\def\om{\omega}
\def\ep{\varepsilon}
\def\>{\rangle}
\def\<{\langle}
\def\>{\rangle}
\def\Fut{\mathrm{Fut}}
\def\Hom{\mathrm{Hom}}
\def\Spec{\mathrm{Spec}}
\def\trace{\mathrm{trace}}  
\def\dim{\mathrm{dim}}
\title[Foldable fans, cscK surfaces and local K-moduli]
{Foldable fans, cscK surfaces and local K-moduli}
\author[C. Tipler]{Carl Tipler}
\address{Univ Brest, UMR CNRS 6205, Laboratoire de Math\'ematiques de Bretagne Atlantique, France}
\email{carl.tipler@univ-brest.fr}
\date{\today}
\begin{document}
 \begin{abstract}
We study the moduli space of constant scalar curvature K\"ahler surfaces around the toric ones. To this aim, we introduce the class of foldable surfaces : smooth toric surfaces whose lattice automorphism group  contain a non trivial cyclic subgroup. We classify such surfaces and show that they all admit a constant scalar curvature K\"ahler  metric (cscK metric). We then study the moduli space of polarised cscK surfaces around a point given by a foldable surface, and show that it is locally modeled on a finite quotient of a toric affine variety with terminal singularities.
\end{abstract}

\maketitle

%%%%%%%%%%%%%%%%%%%%%%%%%%%%%%%%%%%%%%%%%%%%%%%%%%%%%%%%%%%%%%%%%%%%%% 
 
\section{Introduction}
\label{sec:intro}

The construction of moduli spaces of varieties is a central problem in complex geometry. Pioneered by Riemann, the case of moduli spaces of curves is now fairly well understood. In higher dimension however, the situation becomes much more delicate. Indeed, iterated blow-ups (see e.g. \cite[Example 4.4]{Kollar}) or the presence of non-discrete automorphisms induce non-separatedness in moduli considerations. It is then remarkable that despite their very simple combinatorial description, toric surfaces are subject to those two issues, and typically correspond to pathological points in the moduli space of surfaces. In this paper, we will show that when one restricts to constant scalar curvature K\"ahler (cscK) surfaces, the moduli space enjoys a very nice structure near its toric points.

Our motivation to restrict to cscK surfaces comes from the Yau--Tian--Donaldson conjecture (YTD conjecture, see \cite{yau, tian, donaldson}) which predicts that the existence of a constant scalar curvature K\"ahler metric (cscK metric for short) on a given polarised K\"ahler manifold should be equivalent to (a uniform version of)  K-polystability. In the Fano case, the proof of this conjecture  led to the recent construction of the moduli space of K-polystable Fano varieties by Odaka and Li--Wang--Xu \cite{Odaka2015,LiWangXu18,LiWangXu19} (see \cite{tian,Berman16,CDS,Tian15} for the YTD conjecture and also \cite[Part II]{Xu} and reference therein for a historical survey on the related moduli space). Moreover, from Odaka's work \cite{Odaka2012,Odaka2013}, in the canonically polarised case, K-stability is equivalent to having semi-log canonical singularities, a class that was successfully introduced to construct the so-called KSBA moduli space (see \cite{Kollar} for a survey on moduli of varieties of general type). While a general algebraic construction of a moduli space for K-polystable varieties seems out of reach at the moment, on the differential geometric side, Dervan and Naumann recently  built a separated coarse moduli space for compact polarised cscK manifolds \cite{DN}, extending Fujiki and Schumacher's construction that dealt with the case of discrete automorphism groups \cite{FuSchu} (see also Inoue's work on the moduli space of Fano manifolds with K\"ahler--Ricci solitons \cite{Inoue}).

However, it seems to the author that the Fano case, and its log or pair analogues, are the only sources of examples for those moduli spaces, when non--discrete automorphism groups are allowed (see also the discussion in \cite[Example 4.16]{DN} that may lead to further examples via moduli of polystable bundles). Restricting to surfaces, moduli spaces of K\"ahler--Einstein Del Pezzos were explicitly constructed in \cite{OSS}. Our focus will then be on the moduli space of polarised cscK surfaces, and its geometry around points corresponding to the toric ones. Given that they satisfy the YTD conjecture by Donaldson's work \cite{donaldson}, toric surfaces draw a lot of attention, and they appear as natural candidates to test the general machinery on  (compare also with \cite{Petra21,Petra22}). As the full classification of cscK toric surfaces is yet to be carried out, we will consider a subclass whose fans carry further symmetries, which we introduce now.
\begin{definition}
 \label{def:intro}
 Let $N$ be a rank two lattice and $\Sigma$ a complete smooth fan in $N_\R:=N\otimes_\Z \R$.
 The fan $\Sigma$ is called {\it foldable} if its lattice automorphism group $\Aut(N,\Sigma)$ contains a non-trivial cyclic subgroup. The associated toric surfaces will be called {\it foldable  surfaces}.
\end{definition}
In order to understand this class of surfaces amongst the toric ones, we show that all crystallographic groups arise as lattice automorphism groups of two dimensional smooth fans (see Section \ref{sec:latticeautom}):
\begin{proposition}
 \label{prop:intro fans}
 Let $N$ be a rank two lattice and $\Sigma$ a complete smooth fan in $N_\R$. Then  $\Aut(N,\Sigma)$ is isomorphic to one of the groups in the following set
 $$
 \lbrace C_1, C_2, C_3, C_4,C_6, D_1, D_2, D_3, D_4, D_6 \rbrace.
 $$
 Moreover, any group in the above list is isomorphic to the lattice automorphism group of some complete two dimensional smooth fan. 
\end{proposition}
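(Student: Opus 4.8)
The plan is to treat the two assertions separately, since they require different inputs. For the first, I would show that $\Aut(N,\Sigma)$ is a finite subgroup of $\GL(N)\cong\GL_2(\Z)$ and then invoke the classification of finite subgroups of $\GL_2(\Z)$. Finiteness comes from the action on rays: since $\Sigma$ is complete, its set $\Sigma(1)$ of rays is finite and spans $N_\R$, every $g\in\Aut(N,\Sigma)$ permutes $\Sigma(1)$, and this permutation determines $g$. Indeed, if $g$ sends every ray to itself, then it sends the primitive generator $v$ of each ray to $\lambda v$ for some $\lambda\in\Z_{>0}$, and primitivity of both $v$ and $g(v)$ forces $\lambda=1$; as the rays span $N_\R$ this gives $g=\Id$. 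Hence $\Aut(N,\Sigma)$ embeds into the finite symmetric group on $\Sigma(1)$. For the classification: averaging any scalar product over a finite subgroup $G\subseteq\GL_2(\Z)$ yields a $G$-invariant positive definite form, so $G$ is conjugate in $\GL_2(\R)$ to a subgroup of $\mathrm{O}(2)$, hence cyclic (all rotations) or dihedral; and the crystallographic restriction --- a rotation of finite order $n$ in $\GL_2(\Z)$ has integer trace $2\cos(2\pi k/n)$ with $\gcd(k,n)=1$, forcing $n\in\{1,2,3,4,6\}$ --- leaves exactly the ten groups of the statement.

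For the second assertion I would produce, for each group in the list, an explicit smooth complete fan. The three maximal dihedral cases are the classical symmetric surfaces: the fan of $\P^2$ (rays $e_1,e_2,-e_1-e_2$) realizes $D_3$, the fan of $\P^1\times\P^1$ (rays $\pm e_1,\pm e_2$) realizes $D_4$, and the hexagonal fan (rays $\pm e_1,\pm e_2,\pm(e_1+e_2)$, the degree-six del Pezzo surface) realizes $D_6$; in each case the first assertion together with the number of rays forces the automorphism group to be exactly $G$. A Hirzebruch surface $\mathbb{F}_a$ with $a\geq 1$ has automorphism group $D_1$, and blowing it up at one of its torus-fixed points produces $C_1$. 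For the remaining groups $C_2,C_3,C_4,C_6$ and $D_2$, I would start from whichever of the three symmetric fans above already contains the required rotation (and reflection) subgroup $G$, and perform a $G$-equivariant sequence of toric blow-ups at $G$-orbits of torus-fixed points: this preserves smoothness and completeness and keeps $G$ inside the automorphism group of the new fan, and the centres are chosen so that the resulting configuration of rays admits no further lattice symmetry.

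The first assertion is essentially the classical theory of two-dimensional crystallographic point groups, so I expect the main obstacle to be the second one, and more precisely the verification that the constructed fans have automorphism group \emph{exactly} $G$ rather than something larger. Building a fan whose automorphisms \emph{contain} a prescribed $G$ is automatic from equivariance; excluding accidental extra symmetries is delicate, because even very natural $G$-equivariant blow-ups (for instance of the hexagonal fan) can silently restore a reflection, so one has to pick the blow-up centres carefully and then inspect the rays and maximal cones directly in the small cases. Alternatively, the realization part can be read off from the explicit classification of foldable fans carried out later in the paper.
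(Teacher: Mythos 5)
Your proposal is correct and takes essentially the same route as the paper: the first assertion via finiteness of $\Aut(N,\Sigma)$, an invariant inner product, and the crystallographic restriction on traces (the paper's Lemma \ref{lem:classification orders} and the proposition following it), and the second via the symmetric fans of $\P^2$, $\P^1\times\P^1$, the hexagonal surface and Hirzebruch surfaces together with symmetry-breaking (equivariant) blow-ups, verified by case-by-case inspection of rays and cones exactly as in Proposition \ref{prop:all groups arise}. The only difference is one of completeness: the paper exhibits explicit fans $\Sigma_p$, $\Sigma_p'$ for all ten groups (Examples \ref{example:hirzebruch}--\ref{example:blowups}) whereas you leave the realizations of $C_2,C_3,C_4,C_6,D_2$ as a recipe, while conversely your explicit finiteness argument via the faithful action on $\Sigma(1)$ spells out a step the paper leaves implicit.
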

In Proposition \ref{prop:intro fans}, we denote the cyclic group of order $p$ by $C_p$ and the dihedral group of order $2p$ by $D_p$. We make a distinction between $C_2$ and $D_1$ by assuming that $C_2$ acts via $-\Id$ on $N$ while $D_1$ acts through a reflection (see Section \ref{sec:latticeautom}). Then, from Proposition \ref{prop:intro fans}, a complete and smooth two dimensional fan $\Sigma$ in $N_\R$ is foldable if and only if $\Aut(N,\Sigma)$ is not isomorphic to $C_1$ or $D_1$. The associated class of toric surfaces then provides a wide class of examples of cscK surfaces (see Proposition \ref{prop:existence csck}) :

\begin{proposition}
 \label{prop:intro}
 Let $X$ be a foldable surface.   Then $X$ admits a cscK metric in some K\"ahler class.
\end{proposition}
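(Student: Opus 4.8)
The plan is to use the symmetry of a foldable surface to reduce the cscK equation to a manageable problem, following the equivariant approach to the Calabi problem on toric varieties. Since $X$ is foldable, there is a nontrivial cyclic group $G = \langle g \rangle \subset \Aut(N,\Sigma)$, which induces a biholomorphism of $X$ normalizing the torus $T \subset X$. The key reduction is that the search for a cscK metric can be carried out among $T$-invariant K\"ahler metrics, which by Guillemin--Abreu theory are encoded by a convex symplectic potential on the moment polytope $P$ associated to a choice of polarization; the extra symmetry $g$ then acts on $P$ by a lattice affine transformation, and we look for potentials invariant under this action as well.

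First I would recall, via Donaldson's solution of the YTD conjecture for toric surfaces \cite{donaldson}, that a polarised toric surface $(X, L)$ admits a cscK metric in $c_1(L)$ if and only if the associated polytope $P$ is \emph{analytically stable}: for every nonzero convex function $f$ on $P$ that is not affine, the Futaki-type functional $\mathcal{L}_P(f) = \int_{\partial P} f \, d\sigma - a \int_P f \, d\mu$ is strictly positive, where $a$ is the constant making $\mathcal{L}_P$ vanish on affine functions. So the whole statement reduces to producing, for each foldable $\Sigma$, a polytope $P$ with normal fan $\Sigma$ for which this stability inequality holds. Second, I would exploit the $G$-symmetry: $G$ acts on the space of admissible polytopes, so by averaging one may assume $P$ is $G$-invariant; moreover, a standard argument shows it suffices to test $\mathcal{L}_P$ against $G$-invariant convex functions, since $\mathcal{L}_P$ is linear and $G$-equivariant and one can replace any destabilizing $f$ by its $G$-average (which is still convex, still nonaffine if $f$ is chosen to have a suitable leading behaviour — here one argues that if the average of a convex nonaffine function were affine then, by examining where the functions fail to be affine, one reaches a contradiction, or one works instead in the quotient). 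Third, using Proposition \ref{prop:intro fans} one has an explicit finite list of possibilities for $\Aut(N,\Sigma)$, hence a restricted list of the cyclic subgroups $G$ that can occur ($C_2, C_3, C_4, C_6$), and correspondingly an explicit list of the foldable fans up to lattice equivalence; one then checks the stability inequality polytope-family by polytope-family, either by a direct estimate or by invoking known cscK examples (e.g. many low Picard rank toric surfaces such as $\PP^2$, $\PP^1\times\PP^1$, and their symmetric blow-ups are classically known to carry extremal or cscK metrics).

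The main obstacle, and the place where the folding symmetry does real work, will be the case of surfaces of \emph{higher Picard rank} — iterated symmetric blow-ups — where the polytope has many edges and the stability integral is genuinely delicate. Here the hoped-for mechanism is that the cyclic symmetry forces the barycenter of $P$ to sit at a very constrained location (a fixed point of the induced $G$-action on $N_\R^*$, i.e. the origin after recentering), which in turn kills the linear obstruction (the classical Futaki invariant) and leaves only the convex part of $\mathcal{L}_P$ to control; the positivity of that part on $G$-invariant functions should follow from a combinatorial symmetry argument about the distribution of edges of $P$ under $G$, reducing to checking finitely many model polytopes. I expect that one can also phrase the argument more conceptually: the $G$-action makes the reduced Calabi problem on $X/\!/G$ (or an orbifold quotient) into one to which either an inductive blow-up stability result or a direct Donaldson-type estimate applies. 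Verifying that the recentering genuinely annihilates the Futaki invariant for every foldable $\Sigma$, and that no sporadic higher-rank foldable surface escapes the stability estimate, is the crux; everything else is bookkeeping built on Proposition \ref{prop:intro fans} and \cite{donaldson}.
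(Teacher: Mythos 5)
Your strategy---pick a $G$-invariant polytope $P$ with normal fan $\Sigma$ and verify Donaldson's stability inequality $\mathcal{L}_P(f)>0$ via \cite{donaldson}---is a legitimate route in principle, but as written it has a genuine gap exactly at the step you yourself call the crux. Foldable surfaces form an infinite family of unbounded Picard rank (they are iterated $C_p$-equivariant blow-ups of a few minimal models; see Proposition \ref{prop:classification} and the families in \cite{TipTohoku}), so the verification does \emph{not} reduce to ``checking finitely many model polytopes''. Moreover, the mechanism you invoke---the symmetry forces the barycenter to the fixed point of $G$, hence the classical Futaki invariant vanishes, ``leaving only the convex part to control''---does not close the argument: vanishing of $\mathcal{L}_P$ on affine functions is a necessary condition and is far from implying the strict positivity on all non-affine convex $f$, which is the actual content of stability; no estimate is proposed for it. You also do not specify \emph{which} $G$-invariant polytope (i.e.\ which polarisation) to take: for a fixed fan most polarisations are not cscK, so the choice must be made and justified, and this is again an open-ended problem over an infinite family. (The averaging step you worry about is actually fine: if a sum of convex functions is affine, each summand is affine, so the $G$-average of a non-affine convex function is never affine.)

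The paper closes this gap by a different mechanism. By Proposition \ref{prop:classification}, every foldable surface is obtained from $\P^2$, $\P^1\times\P^1$ or $\mathrm{Bl}_{p_1,p_2,p_3}(\P^2)$ by successive $C_p$-equivariant blow-ups, and one argues by induction using the equivariant blow-up theorem of Arezzo--Pacard--Singer and Sz\'ekelyhidi (Theorem \ref{theo:equiv blow up}): blowing up a cscK surface along a $C_p$-orbit of torus fixed points, with a $C_p$-invariant class, yields an extremal metric in $\pi^*\Om-\ep\, c_1(E)$ for small $\ep$, and the Futaki invariant of this class vanishes because the adjoint action of $C_p$ on $\Lie(\Aut(X))$ has no nonzero fixed vector, so averaging any holomorphic vector field over $C_p$ gives zero and equivariance of $\Fut$ forces it to vanish; hence the extremal metric is cscK. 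This is where the folding symmetry does the real work---through the fixed-point-free action on the automorphism Lie algebra, not through a polytope estimate. If you insist on the Donaldson route, you would have to establish (uniform) stability of an explicit $G$-invariant polytope for every member of the infinite family, a computation your proposal does not supply and which the blow-up theorem is designed to avoid.
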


This result relies on the classification of foldable  surfaces (Section \ref{sec:classification}) and an application of Arezzo--Pacard--Singer and Sz\'ekelyhidi's blow-up theorem for extremal K\"ahler metrics \cite{ArPacSing,szeII}. Similar arguments were used in \cite{TipTohoku} to show that any toric surface admits an iterated toric blow-up with a cscK metric. 

\begin{remark}
 To our knowledge,  all known examples of toric cscK surfaces are foldable (see e.g. \cite{TipTohoku} for a family of examples with unbounded Picard rank, and \cite{WangZhou} for a complete classification of polarised cscK toric surfaces up to Picard rank $4$). It would be interesting to produce examples of non foldable toric cscK surfaces.
\end{remark}

Our main result shows that the moduli space of polarised cscK surfaces, which is known to be a complex space \cite{FuSchu,DN}, is quite well behaved around a foldable one, as it is locally modeled on a finite quotient of a toric terminal singularity (see Section \ref{sec:singularity} for a definition of this class of singularities that originated in the MMP). 

\begin{theorem}
 \label{theo:intro}
 Let $(X,[\omega])$ be a polarised cscK foldable surface with fan $\Sigma$ in $N_\R$, and let $G=\Aut(N,\Sigma)$. Denote by $[X]\in \cM$ the corresponding point in the moduli space $\cM$ of polarised cscK surfaces. Then, there are :
 \begin{enumerate}
  \item[(i)] A Gorenstein toric affine $G$-variety $W$ with at worst terminal singularities,
\item[(ii)] Open neighborhoods $\cW$ and $\cU$ of respectively (the image of) the torus fixed point $x\in \cW \subset W/G$ and of $[X]\in \cU\subset \cM$, 
 \end{enumerate}
 such that $(\cU,[X])$ and $(\cW,x)$ are isomorphic as pointed complex spaces.
\end{theorem}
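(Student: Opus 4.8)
The plan is to combine the local structure theory of the moduli space $\cM$ with the combinatorics of toric deformations. \emph{Step 1 (the local model).} By the construction of the moduli of polarised cscK manifolds as a Hausdorff complex space \cite{FuSchu,DN} — and the Kuranishi-type slice theorem that underlies it — the germ $(\cM,[X])$ is biholomorphic to the germ at the origin of a GIT quotient $\mathrm{Kur}(X,[\omega])\,/\!\!/\,\Aut(X,[\omega])$. Here $\Aut(X,[\omega])\subseteq\Aut(X)$ consists of the biholomorphisms preserving $[\omega]\in H^2(X,\R)$; it is reductive by the Lichnerowicz--Matsushima theorem ($X$ being cscK and rational) and contains the torus $T\cong(\C^\times)^2$. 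The Kuranishi space $\mathrm{Kur}(X,[\omega])$ is an $\Aut(X,[\omega])$-invariant analytic germ inside $H^1(X,\Theta_X)$ cut out by the obstruction map to $H^2(X,\Theta_X)$ — the polarisation deforming along because $H^2(X,\cO_X)=0$. Finally, since $X$ itself is cscK the scalar-curvature moment map vanishes at $X$, so by Kempf--Ness the quotient is the \emph{affine} GIT quotient, the $\Spec$ of the ring of invariants.

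\emph{Step 2 (reduction using toricity).} The generalised Euler sequence and Serre duality give $H^2(X,\Theta_X)=0$ (it reduces to $H^0(X,\cO_X(K_X-D_\rho))=0$, which holds because the primitive ray generators positively span $N_\R$); thus the deformations of $X$ are unobstructed and $\mathrm{Kur}(X,[\omega])$ is the germ at $0$ of $V:=H^1(X,\Theta_X)$ with its linear $\Aut(X,[\omega])$-action. If $\Aut_0(X)\ne T$ then $X$ is $\P^2$ or $\P^1\times\P^1$ (no other toric surface has reductive identity component different from its torus) and is rigid, so the theorem holds with $W$ a point; otherwise $\Aut_0(X)=T$, whence — $\Aut_0(X)$ being normal — $\Aut(X)=N_{\Aut(X)}(T)=T\rtimes G$ with $G=\Aut(N,\Sigma)$. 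Taking $[\omega]$ to be $G$-invariant (as for the classes of Proposition~\ref{prop:intro}; in general $G$ is replaced below by the stabiliser of $[\omega]$ in $G$) gives $\Aut(X,[\omega])=T\rtimes G$, and therefore
\[
(\cM,[X])\;\cong\;\bigl(V/\!\!/(T\rtimes G),\,0\bigr)\;\cong\;\bigl((V/\!\!/T)/G,\,x\bigr).
\]
Set $W:=V/\!\!/T=\Spec\C[V]^T$. Since $T$ acts diagonally on $V=\C^{\dim V}$ with characters $m_1,\dots,m_{\dim V}\in M$, the invariant ring is the semigroup algebra of $\Z^{\dim V}_{\ge 0}\cap M'$, where $M':=\ker(\Z^{\dim V}\to M,\ e_i\mapsto m_i)$; so $W=U_\sigma$ is an affine toric variety, the finite group $G$ acts on it compatibly with its torus (permuting the weight spaces $V_{m_i}$ via its action on $M$), and $x$ is the image of $0$, the torus-fixed point of $W$. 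This already produces the asserted isomorphism $(\cU,[X])\cong(\cW,x)$ of pointed complex spaces with $\cW\subseteq W/G$.

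\emph{Step 3 ($W$ is Gorenstein).} In the toric description $K_W=-\sum_i D_i$, so $W$ is Gorenstein exactly when $(1,\dots,1)\in M'$, i.e. when $\sum_i m_i=0$ in $M$, and $W$ is terminal exactly when the polytope $\mathrm{conv}(\bar e_1,\dots,\bar e_{\dim V})\subset N'_\R$ (the images of the standard basis of $\Z^{\dim V}$) contains no lattice point other than its vertices. Gorenstein-ness is now immediate from foldability: because $G=\Aut(N,\Sigma)$ is neither $C_1$ nor $D_1$, it fixes no nonzero vector of $N_\R$, so $\sum_{g\in G}g$ annihilates $M_\R$; as the weight multiset $\{m_i\}$ is $G$-invariant it is a union of $G$-orbits, each of which sums to $0$, whence $\sum_i m_i=0$.

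\emph{Step 4 (the main obstacle: $W$ is terminal).} This does \emph{not} follow from Gorenstein-ness — for instance the characters $(2,0),(-1,0),(-1,0)$ give the $A_1$ surface singularity and $(3,-1,-1,-1)$ the canonical but non-terminal cone $\C^3/\mu_3$ — and such non-primitive, one-directional weight patterns genuinely occur, e.g. for the Hirzebruch surfaces $\F_n$, which are not cscK. To exclude them I would invoke the classification of foldable surfaces from Section~\ref{sec:classification}: the summands of $H^1(X,\Theta_X)$ are the ``strings'' of deformations attached to the toric curves $D_\rho$ with $D_\rho^2\le -2$, so the weight multiset $\{m_i\}$ can be read off the fan $\Sigma$ explicitly; one then checks, case by case along the classification and using that $\{m_i\}$ is a union of $G$-orbits, that $\mathrm{conv}(\bar e_1,\dots,\bar e_{\dim V})$ contains only its vertices as lattice points. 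This final combinatorial verification — converting the cscK/foldability constraint on $\Sigma$ into sharp enough control of the deformation weights — is the technical heart of the argument; granting it, the preceding steps identify $(\cM,[X])$ with $(W/G,x)$ for a Gorenstein, terminal, toric $G$-variety $W$, as required.
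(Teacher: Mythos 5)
Your Steps 1--3 essentially retrace the paper's route (Proposition \ref{prop:moduliaroundtoric} for the local GIT model, Nill/Demazure as in Proposition \ref{prop:reductive aut group toric surface}, unobstructedness, and the Gorenstein condition via the vanishing of the weight sum, which is exactly the averaging argument in Proposition \ref{prop:Wterminal}). But the proposal has a genuine gap precisely where you yourself flag it: terminality is not proved, only ``granted''. Worse, the mechanism you suggest --- ``one then checks, case by case along the classification'' --- cannot be carried out as stated, because the classification of foldable surfaces (Proposition \ref{prop:classification}) is an infinite family: arbitrary iterated $C_p$-equivariant blow-ups of the minimal models, with $\dim H^1(X,TX)$ unbounded. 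What is missing is a reduction of the lattice-point condition to finitely many verifiable cases. The paper achieves this via Lemma \ref{lem:minimal models} together with Ilten's restriction map \cite[Corollary 1.6]{Ilten11}, which gives an inclusion $V_j=H^1(Y_j,TY_j)\subset V=H^1(X,TX)$ and hence a splitting $B=(B_j,B_+)$ of the weight map; injectivity of $B_j$ and saturation of $B_j(N)$ in $\tilde N_j$ force the coefficients $\lambda_i$ of a lattice point of $\Pi_\sigma$ to be controlled by the first $d_j$ coordinates, reducing the whole verification to the three explicit models $Y_2,Y_3,Y_4$ of Example \ref{example:deformation spaces}, where it is a short computation. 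Nothing in your proposal plays this role, and the ``strings of deformations attached to curves with $D_\rho^2\le -2$'' description is not converted into an argument.

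Two auxiliary points are also silently assumed but are nontrivial and are needed even before terminality. First, your Gorenstein and terminality criteria are stated in terms of the images $\bar e_i=A(\tilde e_i)$, whereas the toric criteria involve the \emph{primitive} generators $u_\rho$ of the rays of $\sigma$; the identification of the two requires proving that each $A(\tilde e_i)$ spanning a ray is primitive in $N'$, which the paper establishes by the same reduction to $Y_j$ (non-primitivity would force a weight vector $B_j(x)$ with exactly one coordinate $+1$ and all other nonzero coordinates $\le -2$, impossible for $V_2,V_3,V_4$). Second, for $W$ to have a torus fixed point $x$ at all (and for the criteria of Proposition \ref{prop:Gorenstein and terminal} to apply) one needs $\sigma=A(\tilde\sigma)$ to be strictly convex, i.e.\ $\mathrm{Im}(B)\cap\tilde\sigma=\{0\}$, and $N'=\tilde N/B(N)$ to be torsion-free; both are proved in Proposition \ref{prop:Wtoric}, again by restricting to the minimal models. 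So the skeleton of your argument is the right one, but the ``technical heart'' you defer is exactly the content of Propositions \ref{prop:Wtoric} and \ref{prop:Wterminal}, and without the minimal-model reduction your plan does not close.
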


This result sheds some light on the singularities that may appear on the moduli space of polarised cscK surfaces. Note that by \cite[Theorem 4]{BraunGLM}, the good moduli space of smooth K-polystable Fano manifolds of fixed dimension and volume has klt type singularities (see Section \ref{sec:singularity} for definitions). Also, a combination of Braun, Greb, Langlois and Moraga's work on GIT quotients of klt type singularities \cite[Theorem 1]{BraunGLM}, together with the construction of Dervan and Naumann's moduli space for cscK polarised manifolds \cite[Section 3]{DN}, directly implies that the moduli space of polarised cscK surfaces has klt type singularities around its toric points (see Proposition \ref{prop:moduliaroundtoric}). Hence, Theorem \ref{theo:intro} provides a refinement of Proposition \ref{prop:moduliaroundtoric}, showing that, up to a finite covering, the toric structure is  locally preserved on the moduli space, and that the singularities are terminal, at a foldable point. On the other hand, one may not expect such a nice structure at toric boundary points of a (still hypothetical) compactification $\overline{\cM}$. Indeed, the local structure of the K-moduli space of Fano varieties was studied around singular toric varieties in \cite{Petra21,Petra22}, where much wilder phenomena were observed, mainly due to the existence of obstructed deformations.

\begin{remark}
It would be interesting to understand whether the final quotient $W/G$ in Theorem \ref{theo:intro} still is terminal or not. Note however that it is a non-trivial problem to settle when a finite quotient of a terminal singularity is terminal (see  \cite[Remark page 161]{KM}).
\end{remark}

\begin{remark}
 Due to cohomology vanishings for toric surfaces, the germ of the moduli space of polarised cscK surfaces at a toric point does not depend on the chosen cscK polarisation. See however \cite{SekTip} for local wall-crossing type phenomena when the polarisation is pushed away from the cscK locus of the K\"ahler cone.
\end{remark}

\begin{remark}
 Theorem \ref{theo:intro} provides another instance where canonical K\"ahler metrics, or K-polystability, turns useful in moduli problems.
 We would like to mention two other approaches to produce moduli spaces for (or around) toric varieties. In \cite{Mee}, a notion of analytic stack is introduced to produce moduli spaces of integrable complex structures modulo diffeomorphisms. Those spaces carry more information, as they classify a wider class of varieties, but are typically non-separated. In another direction, leaving the classical setting to the non-commutative one, quantum toric varieties admit moduli spaces which are orbifolds, see \cite{KLMV}.
\end{remark}

We proceed now to an overview of the proof of Theorem \ref{theo:intro}. If $(X,[\omega])$ is a cscK foldable surface, relying on \cite{szedef,DN} and an observation in \cite{RT}, the deformation theory of $(X,[\omega])$ is entirely encoded by the set of unobstructed polystable points in $H^1(X,TX)$ under the action of $\Aut(X)$, the latter being reductive by Matsushima and Lichnerowicz's theorem \cite{matsushima57,Lichn}. Nill's study of toric varieties with reductive automorphism groups \cite{Nill} then shows that either $X$ is isomorphic to $\C\P^2$ or $\C\P^1\times\C\P^1$ in which case it is rigid, or $\Aut^0(X)\simeq  T$, where $T$ is the torus of $X$. Then, by Ilten's work \cite{Ilten11}, $H^2(X,TX)=0$, so that locally, the moduli space we are after is given by (a neighborhood of the origin in) the GIT quotient $H^1(X,TX)//\Aut(X)$, where $\Aut(X)\simeq T\rtimes G$. Thanks again to \cite{Ilten11},
the deformation theory of toric surfaces is explicitly described in terms of their fan, and the affine toric variety $W:=H^1(X,TX)//T$ in Theorem \ref{theo:intro} can be explicitly computed. 
 The proof of Theorem \ref{theo:intro} then goes as follows. By a classification result (see Lemma \ref{lem:minimal models}), $X$ can be obtained by successive $C_p$-equivariant blow-ups of some ``minimal'' models in a list of $6$ toric surfaces. We then use the combinatorial description of toric terminal singularities (see e.g. \cite[Section 11.4]{CLS}) to prove the result for surfaces in that list. Then, using convex geometry, we show that the desired properties for the local moduli space are preserved after the $C_p$-equivariant blow-ups, and hold around $[X]$.
 
 \begin{remark}
  In Section \ref{sec:examples}, we produce an example of a toric surface $X$ such that $\Aut(X)\simeq T$ and the quotient $H^1(X,TX)//T$ is {\it not} $\Q$-Gorenstein. We do not know whether this surface carries a cscK metric. If not, the expectation is that the theory should be extended to smooth pairs $(X,D)$ where $D\subset X$ is a simple normal crossing divisor, considering singular or Poincar\'e type cscK metrics that are non-singular away from $D$, such as in \cite{AAS}.
 \end{remark}

The paper is organised as follows. In Section \ref{sec:background}, we settle the necessary background material on cscK manifolds. Then, Section \ref{sec:foldable toric surfaces} is devoted to the classification of foldable surfaces and the proofs of Proposition \ref{prop:intro fans} and Proposition \ref{prop:intro}. In Section \ref{sec:local moduli construction}, we carry over the construction of the local moduli spaces, and prove Theorem \ref{theo:intro}. Finally, in Section \ref{sec:relate moduli}, we provide maps between the local moduli spaces and speculate about their Weil--Petersson geometry, while in Section \ref{sec:higher dimension} we discuss the higher dimensional case.

\subsection*{Notations}
We will use the notations from \cite{CLS}. 
For a toric variety $X$, $\Aut(X)$ stands for its automorphism group, and $\Aut^0(X)\subset \Aut(X)$ the connected component of the identity. We denote $T$ the torus of $X$, $N$ the lattice of its one parameter subgroups, with dual lattice $M$ and pairing $\langle m , u \rangle$, for $(m,u)\in M\times N$. The fan of $X$ will be denoted $\Sigma$ (or $\Sigma_X$) and letters $\tau,\sigma$ will be used for cones in $\Sigma$. For $0\leq j\leq \dim(X)$, $\Sigma(j)$ is the set of $j$-dimensional cones in $\Sigma$. For $\K\in \lbrace \Q, \R,\C\rbrace$, we let $N_\K:= N\otimes_\Z \K$, and similarly $M_\K=M\otimes_\Z\K$. Finally, $X_\Sigma$ may be used to refer to the toric variety associated to $\Sigma$.

\subsection*{Acknowledgments} 
The author would like to thank Ruadha\'i Dervan and Cristiano Spotti for answering his questions on moduli of cscK manifolds and for several helpful comments, as well as Ronan Terpereau for stimulating discussions on the topic. The author is partially supported by the grants MARGE ANR-21-CE40-0011 and BRIDGES ANR--FAPESP ANR-21-CE40-0017, and beneficiated from the France 2030 framework porgramme, Centre Henri Lebesgue ANR-11-LABX-0020-01.

\section{Background on cscK metrics and their moduli}
 \label{sec:background}
 Let $(X,\Omega)$ be a compact K\"ahler manifold with K\"ahler class $\Omega$. We will give a very brief overview on extremal metrics and their deformations, and refer the reader to \cite{gauduchon,Szebook} for a more comprehensive treatment. 
 \subsection{Extremal K\"ahler metrics}
 \label{sec:extremal}
 Extremal K\"ahler metrics were introduced by Calabi \cite{Calabi} and provide canonical representatives of K\"ahler classes. They are defined as the critical points of the so-called Calabi functional, that assigns to each K\"ahler metric in $\Omega$ the $L^2$-norm of its scalar curvature. They include as special cases of interest cscK metrics and thus K\"ahler--Einstein metrics. The following obstruction to the existence of a cscK metric is due to Matsushima and Lichnerowicz \cite{matsushima57,Lichn}.
 \begin{theorem}[\cite{matsushima57,Lichn}]
  \label{theo:MatLich}
  Assume that $X$ carries a cscK metric. Then the automorphism group of $X$ is reductive.
 \end{theorem}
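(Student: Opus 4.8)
The standard approach I would take is to study the structure of the Lie algebra $\mathfrak{h} = \mathrm{Lie}(\Aut(X))$ of holomorphic vector fields directly, using the cscK metric $\omega$ as a tool to decompose it. The key object is the space $\mathfrak{h}_0$ of holomorphic vector fields with zeros (equivalently, those lying in the image of the $\bar\partial$-operator applied to complex-valued functions); by a theorem of Lichnerowicz and Matsushima, $\mathfrak{h}_0$ coincides with the full $\mathfrak{h}$ when $X$ is, say, Fano, but in general one works with the reductive quotient. The first step is to fix a cscK metric $g$ in $\Omega$, and consider the Lie algebra $\mathfrak{k}$ of Killing vector fields of $(X,g)$, with complexification $\mathfrak{k}^{\CC} \subset \mathfrak{h}$. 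I would then show that $\mathfrak{k}^{\CC}$ is exactly the reductive part, by proving the Hodge-type decomposition $\mathfrak{h}_0 = \mathfrak{k}^{\CC} \oplus J\mathfrak{k}^{\CC}$ is not quite it — rather the decomposition one wants is of $\mathfrak{h}$ itself.

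Concretely, the heart of the argument: given the cscK condition, the complex gradient vector fields $\mathrm{grad}^{1,0} f$ for $f$ complex-valued are holomorphic precisely when $f$ lies in the kernel of the Lichnerowicz operator $\mathcal{D}^*\mathcal{D}$, a fourth-order self-adjoint elliptic operator whose definition uses the metric. When the scalar curvature is constant, this operator has a crucial symmetry: the real and imaginary parts of such $f$ are separately in the kernel (this is where constancy of scalar curvature enters — the relevant commutator term that would mix real and imaginary parts vanishes). This forces the space of holomorphy potentials to be the complexification of the real ones, and the real ones generate Killing fields. So $\mathfrak{h}$ (or the relevant piece of it) is the complexification of the compact Lie algebra $\mathfrak{k}$, hence reductive; exponentiating and taking care of the component group gives that $\Aut(X)$ is reductive.

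The main obstacle — and the place where cscK (as opposed to merely extremal) is essential — is precisely establishing that constancy of the scalar curvature makes $\ker(\mathcal{D}^*\mathcal{D})$ stable under $f \mapsto \bar f$, i.e. that one can separate a holomorphy potential into real and imaginary parts each of which remains a holomorphy potential. This is a Bochner–Weitzenböck type computation: one integrates by parts and the obstruction to the splitting is an integral involving $\{s(\omega), f\}$ (a Poisson-bracket-type term with the scalar curvature), which vanishes when $s(\omega)$ is constant. I would carry this out via the standard identity relating $\bar\partial \mathrm{grad}^{1,0} f$ to $\mathcal{D}f$ and then pairing appropriately. Once the splitting is in hand, the reductivity is essentially formal: a complex Lie algebra that is the complexification of the Lie algebra of a compact group is reductive, and one checks the full automorphism group (not just its identity component) is reductive because the compact real form is a maximal compact subgroup meeting every component.
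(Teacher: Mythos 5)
The paper offers no proof of this statement: it is the classical Matsushima--Lichnerowicz theorem, quoted directly from \cite{matsushima57,Lichn}, so the only comparison available is with the standard argument in those references and in the textbooks the paper points to. Your sketch is exactly that argument --- holomorphy potentials, the fourth-order Lichnerowicz operator $\mathcal{D}^*\mathcal{D}$, and the observation that the conjugation-defect of this operator is a Poisson-bracket term $\{s(\omega),f\}$ which vanishes for constant scalar curvature, so the kernel is stable under $f\mapsto \bar f$ and the algebra of holomorphic vector fields becomes the complexification of the Killing algebra, hence reductive --- and it is correct in outline, the only points you leave implicit being the harmonic (zero-free) part of a holomorphic field, which for a cscK metric is parallel and contributes an abelian summand, and the convention that reductivity of $\Aut(X)$ is read off from its identity component together with the finiteness of the component group in the situations the paper considers.
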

 Later on, Futaki discovered another obstruction to the existence of a cscK metric on $(X,\Omega)$ :  the vanishing of the so-called Futaki invariant \cite{Futaki83,FutBook}. Moreover, from \cite{Calabi2}, an extremal K\"ahler metric in the K\"ahler class $\Om$ is cscK precisely when the Futaki invariant $\Fut_\Om$ vanishes. Together with Arezzo--Pacard--Singer and Sz\'ekelyhidi's results on blow-ups of extremal K\"ahler metrics, we deduce the following existence result for toric surfaces :
 \begin{theorem}[\cite{ArPacSing,szeII}]
 \label{theo:equiv blow up}
  Let $(X,\Om)$ be a smooth compact polarised toric surface with cscK metric $\om\in\Om$. Let $Z\subset X$ be a finite set of torus fixed points, and  $G\subset \Aut(X)$ a finite subgroup such that :
  \begin{itemize}
  \item[(i)] The set $Z$ is $G$-invariant,
   \item[(ii)] The class $\Om$ is $G$-invariant,
   \item[(iii)] The adjoint action of $G$ on $\Lie(\Aut(X))$ has no fixed point but zero.
  \end{itemize}
Then, there is $\ep_0 >0$ such that $(\mathrm{Bl}_Z(X), \Om_\ep)$ carries a cscK metric in the class $\Om_\ep:=\pi^*\Om - \ep c_1(E)$ for $\ep\in (0,\ep_0)$, where $\pi : \mathrm{Bl}_Z(X)\to X$ stands for the blow-down map and $E=\sum_{z\in Z} E_z$ is the exceptional locus of $\pi$.
 \end{theorem}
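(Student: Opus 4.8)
The plan is to verify that the blow-up $\mathrm{Bl}_Z(X)$ satisfies the hypotheses of the Arezzo--Pacard--Singer--Sz\'ekelyhidi gluing theorem for extremal K\"ahler metrics, and then to use the equivariant control on the Futaki invariant to upgrade the resulting extremal metric to a cscK one. First I would recall the precise statement of the blow-up theorem from \cite{ArPacSing,szeII}: if $(X,\Om)$ carries an extremal metric $\om$ with isometry group containing a maximal compact subgroup $K$ of (a maximal compact in) $\Aut(X)$, and $Z\subset X$ is a $K$-invariant finite set of points whose associated weighted point configuration is \emph{balanced} (equivalently, the moment map obstruction at the level of $\mathrm{Lie}(K)$ vanishes for suitable weights), then for small $\ep$ the blow-up $\mathrm{Bl}_Z(X)$ admits an extremal metric in $\pi^*\Om-\ep^2 c_1(E)$ (up to the usual reparametrisation $\ep \mapsto \ep^2$, which I would absorb into the statement's $\ep$). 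Here $X$ is a toric surface, so $\Aut^0(X)$ contains the torus $T$; a cscK metric $\om$ may be taken $T$-invariant by Calabi's theorem on the isometry group of an extremal metric, and in fact we may take it invariant under $T\rtimes G$ since $\Om$ is $G$-invariant by (ii) and one can average. So the relevant compact group acting is $T_c\rtimes G$ where $T_c$ is the compact torus.

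The key point is that the torus fixed points in $Z$ automatically give a balanced configuration in the required sense \emph{because of the symmetry}. Concretely, the obstruction to gluing lives in $\mathrm{Lie}(\Aut(X))$ (or its dual via the moment map), and one must choose the weights $a_z>0$ on the exceptional divisors $E_z$ so that the moment-map images $\sum_z a_z \mu(z)$ match the extremal vector field direction. The adjoint action of $G$ on $\mathfrak{g}:=\mathrm{Lie}(\Aut(X))$ has only $0$ as a fixed point by (iii); since the extremal vector field of $\om$ is $\Aut(X)$-invariant, hence $G$-invariant, it must vanish, so $\om$ is already cscK (consistent with the hypothesis) and the balancing condition becomes $\sum_z a_z\mu(z)=0$ in $\mathfrak{g}^*$ projected to the part dual to $\mathfrak{g}$. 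Now $Z$ is $G$-invariant and $\mu$ is $G$-equivariant, so averaging over $G$ forces $\sum_z a_z\mu(z)$ (for $G$-constant weights, e.g. all $a_z$ equal on each $G$-orbit) to lie in the $G$-fixed subspace of $\mathfrak{g}^*$, which is zero. Hence the configuration is balanced for these weights, the gluing theorem applies, and we obtain an extremal metric $\om_\ep$ on $\mathrm{Bl}_Z(X)$ in $\Om_\ep=\pi^*\Om-\ep c_1(E)$ for $\ep\in(0,\ep_0)$.

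It remains to see that $\om_\ep$ is cscK and not merely extremal, i.e. that $\Fut_{\Om_\ep}$ vanishes; by Calabi's characterisation \cite{Calabi2} this is equivalent to the extremal vector field of $(\mathrm{Bl}_Z(X),\Om_\ep)$ being zero. Here I would argue again by symmetry: the construction can be carried out $G$-equivariantly (the gluing is canonical enough that it respects the $G$-action permuting the $E_z$, since $Z$ and $\Om$ and $\om$ are all $G$-invariant), so $\Om_\ep$ is $G$-invariant and the extremal vector field $V_\ep\in\mathrm{Lie}(\Aut(\mathrm{Bl}_Z(X)))$ is $G$-invariant. Now $\mathrm{Aut}^0(\mathrm{Bl}_Z(X))$ injects into $\mathrm{Aut}^0(X)$ via $\pi_*$ (blowing up points can only shrink the connected automorphism group among these toric-type surfaces), so $V_\ep$ descends to a $G$-invariant element of $\mathfrak{g}$, which is $0$ by hypothesis (iii). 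Therefore $\Om_\ep$ is cscK. The main obstacle, and the point requiring the most care, is the claim that the gluing construction of \cite{ArPacSing,szeII} can be performed $G$-equivariantly and that the balancing condition is satisfied for the natural $G$-symmetric choice of weights; the former is standard for a finite group acting on the data (one works in $G$-invariant H\"older spaces throughout the implicit function theorem argument, as in \cite{szeII}), and the latter reduces, as explained, to the triviality of the $G$-fixed subspace of $\mathfrak{g}^*$, which is exactly hypothesis (iii). I would also remark that a nearly identical argument appears in \cite{TipTohoku}, which can be cited for the equivariant bookkeeping.
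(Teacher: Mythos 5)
Your overall strategy coincides with the paper's: invoke \cite{ArPacSing,szeII} to obtain an extremal metric on $\mathrm{Bl}_Z(X)$ in $\Om_\ep$ for small $\ep$, then use hypothesis (iii) and the $G$-symmetry to force this extremal metric to be cscK. The second step, however, is implemented differently. You propose to run the gluing $G$-equivariantly, deduce that the extremal vector field $V_\ep$ of $(\mathrm{Bl}_Z(X),\Om_\ep)$ is $G$-invariant, push it down to a $G$-fixed element of $\Lie(\Aut(X))$ (vector fields on the blow-up being lifts of fields vanishing on $Z$), and kill it by (iii). The paper never needs the construction, or any metric, to be $G$-invariant: it only uses that, by (i) and (ii), $G$ lifts to $\Aut(\mathrm{Bl}_Z(X))$ preserving $\Om_\ep$, together with the $\Ad$-equivariance of the Futaki character; for an arbitrary $\tilde w\in\Lie(\Aut(\mathrm{Bl}_Z(X)))$ the average $\sum_{g\in G}\Ad_g(\tilde w)$ is $G$-fixed, descends to a $G$-fixed field on $X$, hence vanishes by (iii), whence $\Fut_{\Om_\ep}(\tilde w)=0$ for every $\tilde w$ and Calabi's theorem gives cscK. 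This bypasses the extra claim you must justify, namely that the gluing of \cite{ArPacSing,szeII} can be performed in $G$-invariant function spaces and that $V_\ep$ is well defined and $G$-invariant (which also implicitly requires a $G$-invariant choice of maximal compact subgroup); your claim is indeed standard, so your route works, but it is heavier than necessary. One further caveat on your first step: the extremal blow-up theorems at torus-fixed points do not require the ``balanced'' condition $\sum_z a_z\mu(z)=0$ --- that condition (together with a spanning condition you omit) belongs to Arezzo--Pacard's cscK gluing theorem, not to the extremal result you actually apply --- so that paragraph is superfluous and slightly misattributes the hypotheses; and if you did want to use it, $\mu$ would have to be suitably normalised for the $G$-averaging to yield zero. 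None of this affects the correctness of your outline.
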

  
\begin{proof}
From \cite{ArPacSing,szeII}, there is $\ep_0$ such that $(\mathrm{Bl}_Z(X), \Om_\ep)$ admits an extremal K\"ahler metric for $\ep\in (0,\ep_0)$. We only need to show the vanishing of the Futaki invariant $\Fut_{\Om_\ep}$. This will follow from its equivariance, as already used in \cite[Proposition 2.1]{SekTip23}. Denote $\tilde X=\mathrm{Bl}_Z(X)$. By $(i)$ and $(ii)$, $G$ lifts to a subgroup of $\Aut(\tilde X)$, such that $\Om_\ep$ is $G$-invariant. Then, by equivariance of $\Fut_{\Om_\ep} \in (\Lie ( \Aut(\tilde X)))^*$
(see e.g. \cite[Chapter 3]{FutBook} or \cite[Section 3.1]{LeSim94}), we have for all $g\in G$ and $v\in (\Lie ( \Aut(\tilde X)))$ :
$$
\Fut_{\Om_\ep}(\mathrm{Ad}_g(v))=\Fut_{\Om_\ep}(v).
$$
Let $\tilde w\in \Lie ( \Aut(\tilde X))$. Then $\tilde w$ is the lift of an element $w\in \Lie ( \Aut( X))$ that vanishes on $Z$. Moreover, 
$$
\tilde w_0 := \sum_{g\in G} \mathrm{Ad}_g(\tilde w)
$$
is a fixed point under the adjoint action of $G$ on $\Lie ( \Aut(\tilde X))$. As $Z$ is $G$-invariant, we deduce that it is the lift of a fixed point $w_0\in \Lie ( \Aut( X))$ under the adjoint action of $G$. From $(iii)$, we deduce $\tilde w_0=0$. Hence by equivariance of $\Fut_{\Om_\ep}$ :
$$
0=\Fut_{\Om_\ep}(\tilde w_0)=\sum_{g\in G} \Fut_{\Om_\ep}(\mathrm{Ad}_g(\tilde w))=\vert G\vert\; \Fut_{\Om_\ep}(\tilde w)
$$
where $\vert G \vert$ is the cardinal of $G$. This concludes the proof. 
\end{proof}

 \subsection{Local moduli of polarised cscK manifolds}
 \label{sec:local moduli cscK}
 A moduli space $\cM$ for polarised cscK manifolds has been constructed in \cite{DN}, generalising the results in \cite{FuSchu} by taking care of non-discrete automorphisms by means of \cite{szedef,Inoue}. This space is Hausdorff and endowed with a complex space structure. It is a coarse moduli space in the following sense : its points are in bijective correspondence with isomorphism classes of polarised cscK manifolds and for any complex analytic family $(\cX,\cL) \to \cS$ of polarised cscK manifolds over some reduced analytic space $\cS$ ($\cL$ stands for the relative polarisation of $\cX\to \cS$), there is an induced map $\cS \to \cM$. In what follows, we will restrict to the connected components corresponding to the moduli space of polarised cscK surfaces, still denoted by $\cM$. Roughly, $\cM$ is constructed by patching together local moduli spaces $(\cM_X)_{X\in \mathfrak{S}}$ parameterised by $\mathfrak{S}$, the set of polarised cscK surfaces, each $\cM_X$ being obtained as some open set in an analytic GIT quotient $W_X//G$. Our case of interest is when $X$ is a cscK toric surface. The first cohomology group of the tangent sheaf, namely $H^1(X,TX)$, plays a central role in our study. Indeed, it naturally arises as the space of infinitesimal deformations for $X$, and will play the role of the tangent space at $X$ to the base of a semi-universal family of deformations of $X$ (see e.g. \cite[Chapter 4]{Kodaira}). The tangent bundle $TX$ is naturally $\Aut(X)$-equivariant, meaning that the $\Aut(X)$-action lifts to an action on $TX$ in such a way that the projection map $TX\to X$ is equivariant (the lift is given by the differential of the action). Using pullbacks, we see that the bundles $\Lambda^{p,q}TX^*$ are also equivariant. Then, $\Lambda^{p,q}TX^*\otimes TX$ is equivariant, from which we deduce that $\Aut(X)$ acts naturally on $TX$-valued $(p,q)$-forms. More explicitly, for any $\Aut(X)$-equivariant bundle $E\to X$, we deduce an action of $\Aut(X)$ on the sections of $E$ by setting, for $s\in\Gamma(E)$,  $g\in\Aut(X)$ and $x\in X$:
 $$
 (g\cdot s)(x):= g( s(g^{-1}x)).
 $$
 From holomorphicity of the action, we deduce that the $\Aut(X)$-action on $TX$-valued $(0,p)$-forms  commutes with the Dolbeault operator, hence induces a representation of $\Aut(X)$ on 
 $$
 H^1(X,TX)\simeq \frac{\ker\:( \delb : \Om^{0,1}(TX)\to\Om^{0,2}(TX))}{\mathrm{Im}\:(\delb : \Om^{0,0}(TX)\to\Om^{0,1}(TX))}.
 $$
 In a more formal way, we could say that the Dolbeault resolution of the holomorphic tangent bundle is actually a resolution by injectives in the category of $\Aut(X)$-equivariant sheaves, and thus the right derived functors of the global sections functor $\Gamma$, namely the cohomology groups $H^\bullet(X,TX)$, inherit of an $\Aut(X)$-structure. We then extract from \cite{DN} the following proposition :
 
 \begin{proposition}
  \label{prop:moduliaroundtoric}
  Let $(X,\Om)$ be a polarised cscK toric surface. Then the local moduli space $\cM_X$ is given by an open neighborhood of $\pi(0)$ in the GIT quotient 
  $$
  \pi : H^1(X,TX) \to H^1(X,TX)//\Aut(X).
  $$
  In particular, its singularities are of klt type. 
 \end{proposition}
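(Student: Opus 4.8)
The plan is to read off the local model of $\cM_X$ from the Dervan--Naumann construction, collapse it using the toric vanishing $H^2(X,TX)=0$, and then invoke the Braun--Greb--Langlois--Moraga theorem on quotients of klt type singularities. First, since $(X,\Om)$ is cscK, Theorem~\ref{theo:MatLich} gives that $\Aut(X)$ is reductive. By the construction of \cite[Section 3]{DN} (which upgrades \cite{FuSchu} to the non-discrete case via \cite{szedef,Inoue}), a neighborhood of $[X]\in\cM$ is biholomorphic to an open neighborhood of $\pi(0)$ in an analytic GIT quotient $W_X//\Aut(X)$, where $W_X$ is a Kuranishi slice for $(X,\Om)$: it is cut out inside an $\Aut(X)$-stable open neighborhood $U$ of $0\in H^1(X,TX)$ as the zero set of an $\Aut(X)$-equivariant obstruction map $\Phi\colon U\to H^2(X,TX)$ with $\Phi(0)=0$ and $d\Phi(0)=0$, the stability being taken with respect to the linearisation induced by the Weil--Petersson form, i.e. the moment map for a maximal compact subgroup of $\Aut(X)$.

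Next I would kill the obstruction. For a smooth complete toric surface $X$ one has $H^2(X,TX)=0$; this is part of Ilten's explicit description of the deformation theory of toric surfaces in \cite{Ilten11}, and it also follows from the standard toric cohomology computations of \cite{CLS} applied to the line bundles $\cO_X(D_\rho)$ appearing in the generalised Euler sequence for $TX$ (for a complete fan no relevant character occurs, using a positive linear relation among the ray generators). Hence $\Phi\equiv 0$, so $W_X=U$ is an $\Aut(X)$-stable open neighborhood of $0$ in $H^1(X,TX)$. Since the linearisation on $W_X$ is the restriction of the natural linear one on $H^1(X,TX)$ and forming a GIT quotient commutes with passing to a saturated open subset, a neighborhood of $[X]$ in $\cM$ is identified with an open neighborhood of $\pi(0)$ in
\[ H^1(X,TX)\longrightarrow H^1(X,TX)//\Aut(X), \]
which is the first assertion.

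It remains to deal with the singularities. The vector space $H^1(X,TX)$ is a smooth affine variety, in particular of klt type, on which the reductive group $\Aut(X)$ acts linearly, hence algebraically. By \cite[Theorem 1]{BraunGLM}, the good GIT quotient of a klt type singularity by a reductive group is again of klt type, so $H^1(X,TX)//\Aut(X)$ is of klt type; since $0$ is a fixed point its orbit is closed, hence $0$ is polystable and $\pi(0)$ is a genuine point of the quotient. As being of klt type is inherited by open subsets, $\cM_X$ is of klt type, which is the last assertion.

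I expect the only delicate point to be bookkeeping rather than geometry: one must check that the stability used in the slice construction of \cite{DN} --- a priori phrased via a moment map for a compact group --- coincides with algebraic GIT stability for the reductive complexification $\Aut(X)$ acting linearly on $H^1(X,TX)$ (the linear Kempf--Ness correspondence), and that the neighborhoods involved can be chosen saturated, so that the local moduli space is literally an open subset of $H^1(X,TX)//\Aut(X)$ and not merely of a GIT quotient of a proper subvariety. Granting this analytic/algebraic GIT dictionary, which is already in force throughout \cite{DN}, the argument is routine.
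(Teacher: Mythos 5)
Your overall strategy is the paper's: use the Dervan--Naumann local model, kill the obstruction with $H^2(X,TX)=0$ from \cite{Ilten11}, and get klt type from \cite[Theorem 1]{BraunGLM}. But there is a genuine gap at the very first step, where you place the Kuranishi slice inside $H^1(X,TX)$. The construction of \cite{DN} (following \cite{szedef} and \cite[Lemma 6.1]{ChenSun}) is for \emph{polarised} deformations: the slice lives in $\tilde H^1(TX)$, the first cohomology of the modified complex $(\Om^{0,\bullet}_{\om}(TX),\delb^{\bullet})$ whose terms are cut out by the condition $\iota_\beta\om=0$ (with $\Om^{0,0}_\om(TX)=\cC^\infty(X,\C)$ and $\delb^0=\cD$), not in $H^1(X,TX)$. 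For a general polarised cscK manifold these two spaces differ, and the difference is controlled by $h^{0,1}(X)$ and $h^{0,2}(X)$; the identification $\tilde H^1(TX)\simeq H^1(X,TX)$, $\Aut(X)$-equivariantly, is exactly where the toric hypothesis enters, via the Bott--Steenbrink--Danilov vanishings $h^{0,1}(X)=h^{0,2}(X)=0$ (\cite[Theorem 9.3.2]{CLS}, as observed in \cite[Lemma 2.10]{RT}). Your proof never makes this identification, so as written it proves a statement about the wrong ambient space.

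A second, smaller omission of the same nature: the group acting in the \cite{DN} model is $G=K^\C$, the complexification of the isometry group $K=\mathrm{Aut}(X,\om)$, and you silently replace it by $\Aut(X)$. This needs the observation that for a toric surface $\Aut(X)$ is a linear algebraic (reductive, by Matsushima--Lichnerowicz) group and its Lie algebra contains no parallel vector fields, so $K^\C\simeq\Aut(X)$. Finally, your route through an obstruction map and ``saturated open subsets'' can be shortcut: in \cite{DN} the quotient is taken of $W_X$, the smallest $G$-invariant Stein subspace containing the integrable locus $Z$, and once $H^2(X,TX)=0$ gives $Z=B$ one gets $W_X=H^1(X,TX)$ directly, so the Kempf--Ness/saturation bookkeeping you flag as delicate is not needed. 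The klt-type part of your argument matches the paper and is fine.
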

The above GIT quotient in this affine setting is given by the good categorical quotient that we now recall. Let  $R$ be the ring of regular functions on $H^1(X,TX)$. The $\Aut(X)$-action on $H^1(X,TX)$ induces an $\Aut(X)$-module structure on $R$, given by 
 $$g\cdot f:=f(g\:\cdot\:)$$ for any $g\in \Aut(X)$ and $f\in R$. We then introduce $R^{\Aut(X)}$ to be the set of $\Aut(X)$-invariant elements. By a classical result of Hilbert and Nagata, $R^{\Aut(X)}$ is finitely generated as soon as $\Aut(X)$ is reductive. In that case, the GIT quotient is given by
$$
H^1(X,TX)//\Aut(X):=\mathrm{Spec}(R^{\Aut(X)}),
$$
 with quotient map $$\pi: \mathrm{Spec}(R) \to \mathrm{Spec}(R^{\Aut(X)})$$ corresponding to the inclusion of the finitely generated algebras
 $$
 R^{\Aut(X)}\to R.
 $$
 We will then say that a point in $H^1(X,TX)$ is polystable if its $\Aut(X)$-orbit is closed, and stable if in addition its stabiliser is discrete. We refer the reader to \cite[Section 5.0]{CLS} for a short account on good categorical quotients and to \cite[Section 14.1]{CLS} for the special case of toric GIT quotients. Finally, we refer to Section \ref{sec:singularity} for the definition of klt type singularities.
 \begin{proof}[Proof of Proposition \ref{prop:moduliaroundtoric}]
 The tangent space at $X$, denoted $\tilde H^1(TX)$, of the base of a semi-universal family of complex deformations of $X$ compatible with the polarisation $\Omega$ is constructed in \cite[Lemma 6.1]{ChenSun} (see also \cite{szedef}). In general, the construction of $\tilde H^1(TX)$ goes as follows.
  First, the usual Kuranishi complex for deformations of complex structures on $X$ is 
$$
\ldots \to \Om^{0,k}(TX) \stackrel{\delb}{\to} \Om^{0,k+1}(TX) \to \ldots ,
$$
where the extension of $\delb$ to $(0,p)$-forms is given in local coordinates, for $\beta = \sum_j \beta_j \otimes \frac{\partial}{\partial z^j} $ , by 
$$
\delb \beta = \sum_j \delb \beta_j \otimes \frac{\partial}{\partial z^j}.
$$
We are interested in deformations that are compatible with some cscK metric $\om\in \Om$. Following \cite{FuSchu}, one defines maps
$$
\begin{array}{cccc}
\iota^k_\bullet  : & \Om^{0,k}(TX) & \to &\Om^{0,k+1}\\
                &  \beta & \mapsto & \iota_\beta\om
\end{array}
$$
where $\iota_\beta \om$ is obtained by the composition of first contraction and then alternation operators. For $k\geq 1$, we then set :
$$
\Om_{\om}^{0,k}(TX) := \ker \iota^k_\bullet,
$$
while we define
$$
\Om_{\om}^{0,0}(TX):= \cC^\infty(X,\C).
$$
Together with the restriction of $\delb$, and the map $\delb^0:=\cD$ defined, for $f\in\cC^\infty(X,\C)$, by
$$
\cD(f):=\delb (\nabla_{\om}^{1,0} f),
$$
where $\nabla_{\om}^{1,0} f$ is given by $\delb f = \om( \nabla_{\om}^{1,0}f , \cdot )$,  we obtain another elliptic complex $(\Om_{\om}^{0,\bullet}(TX), \delb^\bullet )$.
We denote by $\tilde H^{\bullet}(TX)$ the associated cohomology groups. Then, from this complex, following Kuranishi's techniques, one can build a semi-universal family $(\cX,\cL)\to Z$ of polarised deformations of $(X,\Omega)$, where $Z\subset B\subset \tilde H^1(TX)$ is an analytic subspace (corresponding to integrable infinitesimal deformations) of some open ball $B$ centered at zero (again, we refer to \cite[Lemma 6.1]{ChenSun} for the detailed construction). As in \cite{doan}, setting $K:=\mathrm{Aut}(X,\om)$ the group of holomorphic isometries of $(X,\omega)$, this construction can actually be made $K$-equivariantly, and even locally $K^\C$-equivariantly on $Z$, for the natural $K^\C$-action on $\tilde H^1(TX)$.
 From \cite[Section 3]{DN}, $\cM_X$ is given by an open neighborhood of $0$ in the analytic GIT quotient $W_X//G$, where $G=K^\C$, and where $W_X\subset \tilde H^1(TX)$ is the smallest $G$-invariant Stein space containing $Z$.  As noticed in \cite[Lemma 2.10]{RT}, as $X$ is toric, by Bott--Steenbrink--Danilov's vanishing of $h^{0,1}(X)$ and $h^{0,2}(X)$ (see \cite[Theorem 9.3.2]{CLS}), the vector space $\tilde H^1(TX)$ is $G$-equivariantly isomorphic to $H^1(X,TX)$. Also, for a toric surface, $\Aut(X)$ is a linear algebraic group, so that $G\simeq \Aut(X)$, as the Lie algebra of $\Aut(X)$ has no parallel vector field (see \cite[Chapter 2]{gauduchon}). Finally, $H^2(X,TX)=0$ by \cite[Corollary 1.5]{Ilten11}, so that any small element of $H^1(X,TX)$ is integrable, and $Z=B\subset H^1(X,TX)$. Hence $W_X=H^1(X,TX)$, and altogether, we obtain the first part of Proposition  \ref{prop:moduliaroundtoric}. The statement about klt type singularities follows directly from \cite[Theorem 1]{BraunGLM}, which asserts that the GIT quotient of a klt type singularity is of klt type, and which implies in particular that the GIT quotient of $H^1(X,TX)$ (which is smooth) by $\Aut(X)$ is of klt type.
 \end{proof}

 \section{Foldable toric surfaces}
\label{sec:foldable toric surfaces}
 Let $N$ be a rank two lattice and $\Sigma$ be a  fan in $N_\R$, with associated toric surface $X$. We will assume $\Sigma$ to be {\it smooth}, that is each cone $\sigma\in\Sigma$ is generated by elements in $N$ that form part of a $\Z$-basis, and to be {\it complete},  i.e. 
 $$
 \bigcup_{\sigma\in\Sigma} \sigma =N_\R.
 $$
 Hence, $X$ is a smooth and compact toric surface.

 \subsection{Lattice automorphisms and fans}
 \label{sec:latticeautom}
 We will be interested in automorphism groups of smooth toric surfaces. Denote by $T:=N\otimes_\Z\C^*$ the torus of $X$, and by $\Aut(N,\Sigma)$ the group of {\it lattice automorphisms} of $\Sigma$. Recall that $\Aut(N,\Sigma)$ is the subgroup of $\GL_\Z(N)$ consisting in elements $g\in \GL_\Z(N)$ such that the induced isomorphisms $g\in \GL_\R(N_\R)$ send $\Sigma$ bijectively to $\Sigma$. By a result of Demazure \cite[Proposition 11]{Demazure}, the automorphism group $\Aut(X)$ of $X$ is a linear algebraic group isomorphic to 
 $$
 \Aut^0(X)\rtimes G,
 $$
 where $G$ is a quotient of $\Aut(N,\Sigma)$.  We will later on be interested in the GIT quotient of $H^1(X,TX)$ by $\Aut(X)$, assuming $X$ to admit a cscK metric. A combination of  Matsushima and Lichnerowicz's obstruction together with Nill's work on toric varieties with reductive automorphism group \cite{Nill} implies :
 \begin{proposition}
  \label{prop:reductive aut group toric surface}
  Assume that $X$ is a toric surface that admits a cscK metric. Denote by $\Sigma$ its fan and $N$ its lattice of one-parameter subgroups. Then either 
  $$
  X\in\lbrace \P^2, \P^1\times\P^1 \rbrace,
  $$
  or 
  $$
 \Aut(X)\simeq T\rtimes \Aut(N,\Sigma).
 $$
 \end{proposition}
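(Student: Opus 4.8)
The plan is to combine Demazure's structure theorem with Matsushima--Lichnerowicz's obstruction and Nill's classification. We know from Demazure that $\Aut(X) \simeq \Aut^0(X) \rtimes G$, where $\Aut^0(X)$ contains the torus $T$ as a maximal torus, and $G$ is a quotient of $\Aut(N,\Sigma)$. Since $X$ admits a cscK metric, Theorem~\ref{theo:MatLich} tells us that $\Aut(X)$ is reductive, hence so is its identity component $\Aut^0(X)$. The strategy is thus to first pin down $\Aut^0(X)$ using reductivity, and then identify the finite part $G$ with the full lattice automorphism group $\Aut(N,\Sigma)$.

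First I would recall the precise description of $\Aut^0(X)$ for a complete smooth toric variety: by Demazure (or Cox's description via the Cox ring), $\Aut^0(X)$ is generated by $T$ together with root subgroups $U_\alpha \simeq \G_a$ indexed by the Demazure roots of $\Sigma$, i.e. lattice points $m \in M$ for which there is a ray generator $u_\rho$ with $\langle m, u_\rho\rangle = -1$ and $\langle m, u_{\rho'}\rangle \geq 0$ for all other rays $\rho'$. Each such root subgroup is a unipotent subgroup normalised by $T$, and its presence obstructs reductivity: if $\Aut^0(X)$ is reductive, there can be no nontrivial unipotent normal pieces, and more precisely the root system must be ``balanced'' so that $\Aut^0(X)$ is a genuine reductive group. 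This is exactly the content of Nill's work \cite{Nill} on toric varieties with reductive automorphism group. In the surface case his classification yields a very short list: either $X$ has no Demazure roots at all, in which case $\Aut^0(X) = T$; or the roots arrange themselves into the root system of $\mathrm{SL}_3$ or $\mathrm{SL}_2 \times \mathrm{SL}_2$, forcing $X \simeq \P^2$ or $X \simeq \P^1 \times \P^1$ respectively. (Any surface with more symmetric automorphism group would have to be one of these, since $\mathrm{PGL}_3$ and $(\mathrm{PGL}_2)^2$ already act with the maximal possible torus of rank two, and a third independent root direction cannot balance.) So after this step we are reduced to the case $\Aut^0(X) = T$.

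It then remains to show that in this case $G \simeq \Aut(N,\Sigma)$, i.e. that the quotient map $\Aut(N,\Sigma) \to G$ is an isomorphism. The point is that the kernel of this quotient, in Demazure's description, consists of lattice automorphisms that can be ``absorbed'' into $\Aut^0(X)$ by composing with an element of the group generated by the root subgroups; when $\Aut^0(X) = T$ (no roots), there is nothing to absorb, the map is injective, and one gets $\Aut(X) \simeq T \rtimes \Aut(N,\Sigma)$. Concretely: any $g \in \Aut(N,\Sigma)$ induces a genuine automorphism of $X$ permuting the torus-fixed points and $T$-orbits according to its action on $\Sigma$, and normalising $T$; conversely any element of $\Aut(X)$ normalising $T$ induces a lattice automorphism of $\Sigma$, and by the semidirect product decomposition with $\Aut^0(X) = T$ every coset of $T$ in $\Aut(X)$ has such a representative. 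One should check that distinct elements of $\Aut(N,\Sigma)$ give distinct cosets, which again uses that $T$ is its own normaliser's identity component and that a lattice automorphism acting trivially on the fan is the identity.

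The main obstacle I expect is not the toric-geometry bookkeeping but making the invocation of Nill's classification \cite{Nill} precise and self-contained enough: one must argue that reductivity of $\Aut^0(X)$, for a \emph{surface}, genuinely forces the trichotomy $\{T,\ \mathrm{PGL}_3,\ \mathrm{PGL}_2\times\mathrm{PGL}_2\}$ (up to the relevant identifications), rather than leaving open some exotic reductive group of rank two acting on a smooth complete toric surface. The cleanest route is to observe that a reductive $\Aut^0(X)$ of rank two with nontrivial semisimple part must have semisimple rank one or two; semisimple rank two over a two-dimensional torus gives the two Del Pezzo cases, while semisimple rank one would give an $\mathrm{SL}_2$-root-pair which, by the Demazure root condition, cannot occur on its own on a \emph{complete smooth} surface without generating more roots — so either it closes up to one of the two maximal cases or there are no roots at all. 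Once that dichotomy is nailed down, the rest is a direct translation through Demazure's theorem.
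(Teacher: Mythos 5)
Your proposal is correct and follows essentially the same route as the paper: Matsushima--Lichnerowicz for reductivity, Demazure's structure theorem $\Aut(X)\simeq \Aut^0(X)\rtimes G$ with $G$ a quotient of $\Aut(N,\Sigma)$ by the Weyl group of the root system $\cR\cap-\cR$, and Nill's theorem to force either a product of projective spaces or an empty Demazure root system, whence $\Aut^0(X)=T$ and the quotient is by a trivial Weyl group. The only differences are cosmetic (a sign convention on Demazure roots, and the paper phrases the Nill step as the dimension count $\dim\Aut(X)=\dim T+\vert\cR\vert=2$ rather than a rank analysis of the semisimple part).
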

\begin{proof}
 By Matsushima and Lichnerowicz theorem \cite{matsushima57,Lichn}, $\Aut(X)$ is reductive. The result then follows from Nill's result \cite[Theorem 1.8]{Nill} together with Demazure's structure theorem \cite[Proposition 11, p. 581]{Demazure}, that we now recall. Demazure's root system for $(N,\Sigma)$, that we denote here by $\cR$ (it corresponds to $\mathrm{Rac}(\Sigma)$ in Demazure's notations \cite{Demazure}), is the set 
 $$
 \cR:=\lbrace m\in M\:\vert \:  \exists \rho\in\Sigma(1)\:\mathrm{with}\: \langle m,  u_\rho\rangle =1 \:\mathrm{but}\:\forall \rho'\in\Sigma(1),\: \rho'\neq\rho, \langle m , u_{\rho'}\rangle\leq 0\rbrace ,
 $$
 where we use the notation $u_\rho\in N$ for the primitive generator of the ray $\rho$. Demazure's structure theorem then asserts that the identity component $\Aut(X)^0$ of $\Aut(X)$ is generated by $T$ and some unipotent elements $\lbrace U_m, m\in\cR\rbrace$. In particular,
 $$
 \dim(\Aut(X))=\dim(T)+\vert \cR\vert.
 $$
 Then, the quotient $\Aut(X)/\Aut(X)^0$ is finite and isomorphic to a quotient of $\Aut(N,\Sigma)$ by a subgroup $W(N,\Sigma)\subset \Aut(N,\Sigma)$ that is  the Weyl group of a maximal reductive subgroup of $\Aut(X)$ with root system $\cR\cap-\cR$. Then, Nill's theorem \cite[Theorem 1.8]{Nill} gives an upper bound for the dimension of the automorphism group of a smooth complete toric variety, when it is reductive. In the case of a smooth toric surface $X$ with reductive automorphism group, the outcome is that either $X$ is a product of projective spaces, or $\dim(\Aut(X))=2$. In the latter case, from the previous discussion on Demazure's result, this implies that the set of Demazure's roots $\cR$ for $(N,\Sigma)$ is empty. Hence, $\Aut(X)^0=T$ and $\Aut(X)/\Aut(X)^0\simeq \Aut(N,\Sigma)$, which concludes the proof.
\end{proof}
As $\P^2$ and $\P^1\times\P^1$ are rigid, we will focus now on the case 
$$
\Aut(X)\simeq T\rtimes \Aut(N,\Sigma),
$$
and characterise the possible finite groups that arise as lattice automorphism groups of rank two complete fans.

\begin{lemma}
\label{lem:classification orders}
 Let $g\in \Aut(N,\Sigma)$, and denote by $m\in \N$ the order of $g$. Then $$m\in \lbrace 1, 2, 3, 4, 6\rbrace.$$
 Moreover, the complex linear extension of $g$ to $N_\C$ is conjugated (in $\GL(N_\C)\simeq \GL_2(\C)$) to the following  :
 \begin{enumerate}
  \item If $m=1$, then $g \sim \Id$,
  \item If $m=2$ and $\det(g)= 1$, then $g\sim -\Id$,
  \item If $m=2$ and $\det(g)=-1$, then $g\sim \left[                      
                                \begin{array}{cc}
                                      1 & 0 \\
                                      0 & -1
                              \end{array}
                              \right]$,
  \item If $m=3$, then $g\sim      \left[                      
                                \begin{array}{cc}
                                      j & 0 \\
                                      0 & j^2
                              \end{array}
                              \right]$,                       
\item If $m=4$, then $g\sim      \left[                      
                                \begin{array}{cc}
                                      i & 0 \\
                                      0 & -i
                              \end{array}
                              \right]$, 
\item If $m=6$, then $g\sim      \left[                      
                                \begin{array}{cc}
                                      -j & 0 \\
                                      0 & -j^2
                              \end{array}
                              \right]$,
 \end{enumerate}
where we denoted by $j=e^{i\frac{2\pi}{3}}$ and by $\sim$ the equivalence relation given by conjugation, and where we used an isomorphism $N_\C \simeq \C^2$.
\end{lemma}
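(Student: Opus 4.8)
This is the two-dimensional \emph{crystallographic restriction} together with a normal-form computation, so I would argue by elementary linear algebra over $\Z$ and $\C$, using nothing about the fan beyond the fact that $\Aut(N,\Sigma)$ is finite (it embeds into the permutation group of the finite set $\Sigma(1)$ of rays, since a lattice automorphism fixing every primitive ray generator is the identity). In particular $g^m=\Id$.

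First I would diagonalise. Since $g^m=\Id$, the minimal polynomial of $g$ divides the separable polynomial $t^m-1$, so $g$ is diagonalisable over $\C$ with eigenvalues $\lambda_1,\lambda_2$ that are $m$-th roots of unity. As $g\in\GL_2(\Z)$ is a real matrix, $\{\lambda_1,\lambda_2\}$ is either a pair of real numbers or a complex-conjugate pair; in all cases $\lambda_1\lambda_2=\det g\in\{1,-1\}$ and $\lambda_1+\lambda_2=\mathrm{tr}\,g$ is an integer with $|\mathrm{tr}\,g|\le|\lambda_1|+|\lambda_2|=2$. The entire proof is then the analysis of these two constraints.

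If $\det g=-1$: a non-real conjugate pair would have product $|\lambda_1|^2=1\neq-1$, so both eigenvalues are real roots of unity, hence lie in $\{1,-1\}$, and the product being $-1$ forces $\{\lambda_1,\lambda_2\}=\{1,-1\}$; thus $m=2$, $\mathrm{tr}\,g=0$, and $g\sim\mathrm{diag}(1,-1)$, which is case (3). If $\det g=1$: then $\lambda_2=\lambda_1^{-1}$, so writing $\lambda_1=e^{i\theta}$ gives $\mathrm{tr}\,g=2\cos\theta\in\{-2,-1,0,1,2\}$, which leaves (up to replacing $\theta$ by $-\theta$, which only swaps the two eigenvalues) exactly $\theta\in\{0,\pi,\pi/2,\pi/3,2\pi/3\}$. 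Reading off the eigenvalue pair in each case — $\{1,1\}$, $\{-1,-1\}$, $\{i,-i\}$, $\{e^{i\pi/3},e^{-i\pi/3}\}=\{-j,-j^2\}$, $\{j,j^2\}$ — yields respectively $g\sim\Id$ with $m=1$, $g\sim-\Id$ with $m=2$, $g\sim\mathrm{diag}(i,-i)$ with $m=4$, $g\sim\mathrm{diag}(-j,-j^2)$ with $m=6$, and $g\sim\mathrm{diag}(j,j^2)$ with $m=3$. In each case the displayed eigenvalues are primitive $m$-th roots of unity (or, for $m\le 2$, both equal to the unique such root), so the order of $g$ is exactly $m$. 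Combining the two cases gives $m\in\{1,2,3,4,6\}$ together with all the stated conjugacy classes.

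I do not expect a genuine obstacle. The only points needing a word of care are ruling out $m\in\{3,4,6\}$ when $\det g=-1$ — handled by the "product of a conjugate pair is $1$" observation — and confirming that $g$ has order \emph{exactly} $m$ rather than a proper divisor, which is immediate once one checks that the eigenvalues read off from the trace are primitive roots of unity of the claimed order.
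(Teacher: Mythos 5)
Your proof is correct and follows essentially the same route as the paper: diagonalise $g$ over $\C$ (you via separability of $t^m-1$, the paper via Jordan form plus finite order), then exploit that the eigenvalues are roots of unity while $\det g\in\{\pm 1\}$ and $\mathrm{tr}\,g$ is an integer of absolute value at most $2$, and read off the possible eigenvalue pairs. Your bookkeeping by cases on $\det g$ versus the paper's split into real/non-real eigenvalues is an immaterial difference, and your preliminary remark justifying finiteness of the order (embedding of $\Aut(N,\Sigma)$ into the permutations of $\Sigma(1)$) is a harmless addition.
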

The proof is an elementary exercise in linear algebra. We include it for convenience of the reader.
\begin{proof}
Fix and isomorphism $N\simeq \Z^2$ and identify $g$ with an element of $\GL_2(\Z)$. The characteristic polynomial of $$g =\left[                      
                                \begin{array}{cc}
                                      a & b \\
                                      c & d
                              \end{array}
                              \right]$$                    
then reads 
$$
\chi(Y)=Y^2-(a+d)Y+(ad-bc).
$$
On the other hand, Jordan's normal form for the $\C$-linear extension of $g$ implies that it is conjugated (in $\mathrm{M}_2(\C)$) to 
$$
\left[
\begin{array}{cc}
                                      \alpha & \delta \\
                                      0 & \beta
                              \end{array}
                              \right]
$$
for $(\alpha,\beta)\in\C^2$ and $\delta\in\lbrace 0,1 \rbrace$. As $g$ is both invertible and of finite order, we deduce that $\delta =0$ and $g$ is conjugated to 
$$
\left[
\begin{array}{cc}
                                      \alpha & 0 \\
                                      0 & \beta
                              \end{array}
                              \right].
$$
Moreover, $\alpha^m=\beta^m=1$, so that $\alpha$ and $\beta$ are $m$-th roots of unity in $\C$. On the other hand, by conjugation invariance, we also have 
$$
\det(g)=\alpha\beta=ad-bc\in \lbrace -1,+1 \rbrace$$ as $g\in \GL_2(\Z)$ 
and 
$$
\trace(g)=\alpha +\beta =a+d \in\Z.
$$
If $\alpha$ and $\beta$ belong to $\lbrace -1, +1 \rbrace$, that is if $m=2$, then we are done. If not, at least one of $\alpha$ or $\beta$ is not real, and thus $\alpha$ and $\beta$ must be complex conjugated roots of $\chi(Y)$. Hence 
$$\trace(g)=\alpha +\overline{\alpha}\in\Z,$$
but also 
$$\vert \alpha +\overline{\alpha} \vert <2$$
so that 
$$\alpha +\overline{\alpha} \in \lbrace -1,0, 1 \rbrace.
$$
Hence 
$$\alpha \in \lbrace \pm i, \pm j, \pm j^2 \rbrace,$$
from which the result follows easily.
\end{proof}
Recall that we denote by $C_p$ the cyclic group of order $p$ and by $D_p$ the dihedral group of order $p$.
We then have :
\begin{proposition}
 The group $\Aut(N,\Sigma)$ is isomorphic to one of the groups in the following set 
 $$
  \lbrace C_1, C_2, C_3, C_4,C_6, D_1, D_2, D_3, D_4, D_6 \rbrace.
 $$
\end{proposition}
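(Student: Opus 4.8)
The plan is to classify the finite subgroups of $\GL_\Z(N) \simeq \GL_2(\Z)$ that can arise as $\Aut(N,\Sigma)$ for a complete fan, using the structural information already assembled in Lemma \ref{lem:classification orders}. By that lemma, every element of $\Aut(N,\Sigma)$ has order in $\{1,2,3,4,6\}$; in particular, $\Aut(N,\Sigma)$ is a finite group whose elements have these orders, and it embeds into $\GL_2(\Z)$. The first step is a purely group-theoretic one: I would recall (or re-derive via the crystallographic restriction theorem) that the finite subgroups of $\GL_2(\Z)$ are, up to conjugacy, exactly the cyclic groups $C_1, C_2, C_3, C_4, C_6$ and the dihedral groups $D_1, D_2, D_3, D_4, D_6$. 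The bound on orders from Lemma \ref{lem:classification orders} immediately rules out $C_5$ and all $C_p$ with $p\geq 7$, and likewise the corresponding dihedral groups, since a dihedral group $D_p$ contains a cyclic subgroup $C_p$. What remains on the group side is to check that no \emph{other} finite subgroup type occurs: a finite subgroup $G$ of $\GL_2(\C)$ whose elements all have order dividing $6$ and which is generated by rotations and (possibly) reflections in the sense of Lemma \ref{lem:classification orders} is either cyclic or dihedral — this follows from the classification of finite subgroups of $O(2)$ once one observes that $\Aut(N,\Sigma)$ preserves a lattice, hence (after averaging) a positive definite quadratic form, so it is conjugate to a subgroup of $O(2)$.

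Concretely, I would argue as follows. Averaging the standard inner product over the finite group $\Aut(N,\Sigma)$ produces an invariant inner product, so there is $P \in \GL_2(\R)$ with $P\,\Aut(N,\Sigma)\,P^{-1} \subset O(2)$. Finite subgroups of $O(2)$ are cyclic (consisting of rotations) or dihedral. If $\Aut(N,\Sigma)$ contains only orientation-preserving elements it is cyclic, say $C_k$; by Lemma \ref{lem:classification orders} the order of any element — hence $k$ — lies in $\{1,2,3,4,6\}$, giving $C_1, C_2, C_3, C_4, C_6$. If it contains an orientation-reversing element, it is dihedral $D_k$ with rotation subgroup $C_k$, and again $k\in\{1,2,3,4,6\}$, giving $D_1, D_2, D_3, D_4, D_6$. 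The distinction between $C_2$ (acting by $-\Id$) and $D_1$ (acting by a reflection) is exactly the one fixed in Section \ref{sec:latticeautom}: $-\Id$ has determinant $+1$ while a reflection has determinant $-1$, so these are genuinely different subgroups of $\GL_2(\Z)$, and both must be listed separately. This completes the inclusion of $\Aut(N,\Sigma)$ in the stated set.

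The only subtlety — and I expect this to be the main (though minor) obstacle — is that the proposition as stated is a one-directional claim (every $\Aut(N,\Sigma)$ is isomorphic to one of the ten groups), so no realizability of each group by an actual fan is needed here; that half is Proposition \ref{prop:intro fans} and presumably handled by exhibiting explicit fans later. Thus the proof reduces to the above averaging-plus-order-bound argument, and the one thing to be careful about is bookkeeping: ensuring that the order constraint from Lemma \ref{lem:classification orders} is applied to \emph{every} element (not just generators) so that, e.g., a group like $C_2 \times C_2$ is correctly identified as $D_2$ rather than overlooked, and that the case $k \in \{1,2,3,4,6\}$ is the complete list of integers $k$ all of whose element-orders in $C_k$ (namely the divisors of $k$) lie in $\{1,2,3,4,6\}$ — which forces $k \neq 5$ and $k \le 6$, hence $k \in \{1,2,3,4,6\}$ exactly. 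I would close the argument by noting that conjugation in $\GL_2(\Z)$ versus abstract isomorphism does not matter for the statement, since we only claim an isomorphism of abstract groups.
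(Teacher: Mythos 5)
Your proposal is correct and follows essentially the same route as the paper: average a Euclidean metric over the finite group $\Aut(N,\Sigma)$, invoke the classification of finite subgroups of $O(2)$ to conclude the group is cyclic or dihedral, and then use Lemma \ref{lem:classification orders} to restrict the possible orders to $\{1,2,3,4,6\}$. Your extra remarks (determinant distinguishing $C_2$ from $D_1$, no realizability needed here) are consistent with the paper's treatment and add nothing problematic.
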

\begin{proof}
As $\Aut(N,\Sigma)$ is finite, we can fix an $\Aut(N,\Sigma)$-invariant euclidean metric on $N_\R$ (again considering the $\R$-linear extension of $\Aut(N,\Sigma)$). Then, by the classification of finite subgroups of the group of orthogonal transformations of the plane, we deduce that $\Aut(N,\Sigma)$ is isomorphic to $C_p$ or $D_p$, for some $p\in\N^*$. As those groups admit elements of order $p$, by Lemma \ref{lem:classification orders}, the result follows. 
\end{proof}
\begin{remark}
 This proposition simply recovers the well known classification of crystallographic groups in dimension $2$.
\end{remark}

We will now show that all the above listed groups arise as lattice automorphism groups of rank two complete smooth fans. Note that $C_2$ and $D_1$ will be distinguished by the fact that their generator is $-\Id$ in the first case and a reflection in the second case.
Consider then the following fans in $\Z^2$, where we denote by $\lbrace e_1, e_2 \rbrace$ the standard basis of $\Z^2$.
\begin{example}
\label{example:hirzebruch}
Let $\FF_n$ be the $n$-th Hirzebruch surface, that is the total space of the fibration 
$$\PP(\mathcal{O}_{\P^1}\oplus \mathcal{O}_{\P^1}(n)) \rightarrow \CC\PP^1.$$
Note that $\FF_0=\CC\PP^1\times\CC\PP^1$. Then, up to isomorphism, the fan $\Sigma_{\FF_n}$ of $\FF_n$ is described by 
$$\Sigma_{\FF_n}(1)=\lbrace \R_+\cdot(0,-1),\:\R_+\cdot(1,0),\:\R_+\cdot(0,1),\:\R_+\cdot(-1,-n) \rbrace.$$
As an example, $\FF_2$
admits the fan description of Figure 1, that we will denote by $\Sigma_1'$. In this figure, and the one that follow, the dots represent the lattice points, and we only represent the ray generators of the complete two dimensional fan. 

\begin{figure}[htbp]
\label{fig1prime}
\psset{unit=0.95cm}
\begin{pspicture}(-3.5,-3.5)(3.5,0.5)
$$
\xymatrix @M=0mm{
% \bullet & \bullet & \bullet & \bullet & \bullet \\
\bullet & \bullet & \bullet & \bullet & \bullet \\
 \bullet & \bullet & \bullet\ar[u]^{e_2}\ar[r]^{e_1}\ar[ldd]\ar[d]& \bullet & \bullet \\
\bullet & \bullet & \bullet & \bullet & \bullet \\
\bullet & \bullet & \bullet & \bullet & \bullet 
}
$$
\end{pspicture}
\caption{Fan $\Sigma_1'$ of $\FF_2$}
\end{figure}
\end{example}

\begin{example}
\label{example:projectivespace}
In Figure 2 and Figure 3 are depicted the fans $\Sigma_{\P^2}=\Sigma_3'$ and $\Sigma_{\P^1\times\P^1}=\Sigma_4'$ of $\P^2$ and $\P^1\times\P^1$ respectively (the numbering of the fans is motivated by Proposition \ref{prop:all groups arise} below).
\begin{figure}[htbp]
\label{fig3prime}
\psset{unit=0.95cm}
\begin{pspicture}(-3.5,-2.5)(3.5,0.5)
$$
\xymatrix @M=0mm{
% \bullet & \bullet & \bullet & \bullet & \bullet \\
\bullet & \bullet & \bullet & \bullet & \bullet \\
 \bullet & \bullet & \bullet\ar[u]^{e_2}\ar[r]^{e_1}\ar[dl]& \bullet & \bullet \\
% \bullet & \bullet & \bullet & \bullet & \bullet \\
\bullet & \bullet & \bullet & \bullet & \bullet
}
$$
\end{pspicture}
\caption{Fan $\Sigma_3'$ of $\P_2$}
\end{figure}
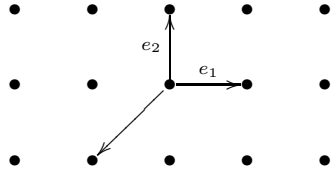

\begin{figure}[htbp]
\label{fig4prime}
\psset{unit=0.95cm}
\begin{pspicture}(-3.5,-2.5)(3.5,0.5)
$$
\xymatrix @M=0mm{
\bullet & \bullet & \bullet & \bullet & \bullet \\
 \bullet & \bullet & \bullet\ar[u]^{e_2}\ar[r]^{e_1}\ar[l]\ar[d]& \bullet & \bullet \\
\bullet & \bullet & \bullet & \bullet & \bullet 
}
$$
\end{pspicture}

\caption{Fan $\Sigma_4'$ of $\CC\PP^1\times\CC\PP^1$}
\end{figure}
\end{example}
\newpage
\begin{example}
 \label{example:blowups}

Recall that blowing-up a smooth toric surface along a torus fixed point produces a new smooth toric surface. By the orbit cone correspondence, such a fixed point corresponds to a two dimensional cone $$\sigma =\R_+\cdot e_i + \R_+\cdot e_{i+1}$$
in the fan of the blown-up surface, and the fan of the resulting surface is obtained by adding the ray generated by $e_i + e_{i+1}$ to the set of rays of the initial surface (see  \cite[Chapter 3, Section 3]{CLS}). By iterated blow-ups, we obtain the following fans. Figure 4 represents an iterated blow-up of $\P^1\times\P^1$.

\begin{figure}[htbp]
\label{fig2prime}
\psset{unit=0.95cm}
\begin{pspicture}(-3.5,-2.5)(3.5,0.5)
$$
\xymatrix @M=0mm{
% \bullet & \bullet & \bullet & \bullet & \bullet \\
\bullet & \bullet & \bullet & \bullet & \bullet \\
 \bullet & \bullet & \bullet\ar[u]\ar[r]\ar[l]\ar[d]\ar[ru]\ar[ld]\ar[urr]
\ar[lld]
& \bullet & \bullet \\
% \bullet & \bullet & \bullet & \bullet & \bullet \\
\bullet & \bullet & \bullet & \bullet & \bullet 
}
$$
\end{pspicture}
\caption{Fan $\Sigma_2'$ : iterated blow-up of $\P^1\times\P^1$}
\end{figure}
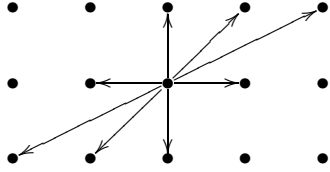

The following Figure 5 is a two points blow-up of $\P^1\times\P^1$ and at the same time a three points blow-up of $\C\P^2$. 

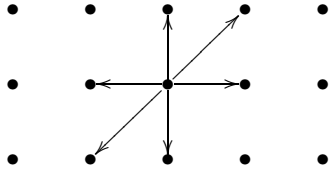
\begin{figure}[htbp]
\label{figX}
\psset{unit=0.95cm}
\begin{pspicture}(-3.5,-2.5)(3.5,0.5)
$$
\xymatrix @M=0mm{
% \bullet & \bullet & \bullet & \bullet & \bullet \\
\bullet & \bullet & \bullet & \bullet & \bullet \\
 \bullet & \bullet & \bullet\ar[u]\ar[r]\ar[l]\ar[d]\ar[ru]\ar[ld]& \bullet & \bullet \\
% \bullet & \bullet & \bullet & \bullet & \bullet \\
\bullet & \bullet & \bullet & \bullet & \bullet 
}
$$
\end{pspicture}
\caption{Fan $\Sigma_6'$ : blow-up of $\P^2$ along its three fixed points.}
\end{figure}

Here are further examples to complete our classification of lattice automorphism groups of rank two complete smooth fans (Figures 6 to 10). The first one is a single blow-up of $\FF_2$, that cancels the symmetry of $\Sigma_1'=\Sigma_{\FF_2}$.

\begin{figure}[htbp]
\label{fig5}
\psset{unit=0.95cm}
\begin{pspicture}(-3.5,-4.5)(3.5,0.5)
$$
\xymatrix @M=0mm{
% \bullet & \bullet & \bullet & \bullet & \bullet \\
\bullet & \bullet & \bullet & \bullet & \bullet \\
 \bullet & \bullet & \bullet\ar[u]^{e_2}\ar[r]^{e_1}\ar[ldd]\ar[d]\ar[lddd] & \bullet & \bullet \\
\bullet & \bullet & \bullet & \bullet & \bullet \\
\bullet & \bullet & \bullet & \bullet & \bullet \\
\bullet & \bullet & \bullet & \bullet & \bullet 
}
$$
\end{pspicture}
\caption{Fan $\Sigma_1$ : one-point blow-up of $\FF_2$}
\end{figure}
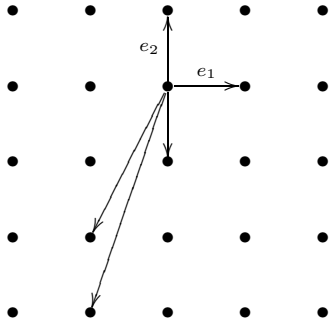
\newpage
The next one is obtained from $\Sigma_2'$ by blowing-up in a $C_2$-equivariant way (recall that $C_2=\langle -\Id \rangle$).
\begin{figure}[htbp]
\label{fig2}
\psset{unit=0.95cm}
\begin{pspicture}(-3.5,-4.5)(3.5,0.5)
$$
\xymatrix @M=0mm{
\bullet &\bullet & \bullet & \bullet & \bullet & \bullet &\bullet \\
\bullet &\bullet & \bullet & \bullet & \bullet & \bullet &\bullet \\
 \bullet &\bullet & \bullet & \bullet\ar[u]\ar[r]\ar[l]\ar[d]\ar[ru]\ar[ld]\ar[urr]\ar[uurrr]
\ar[lld]\ar[llldd]
& \bullet & \bullet &\bullet \\
\bullet &\bullet & \bullet & \bullet & \bullet & \bullet &\bullet \\
\bullet &\bullet & \bullet & \bullet & \bullet & \bullet &\bullet 
}
$$
\end{pspicture}
\caption{Fan $\Sigma_2$.}
\end{figure}
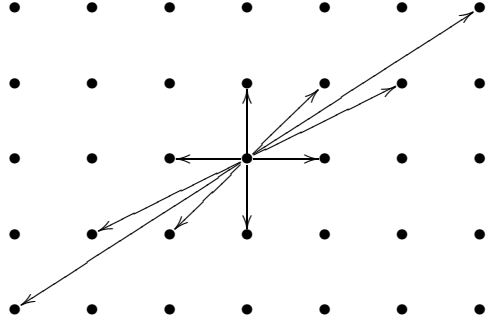

The third one is obtained from the fan of $\P^2$ by three successive blow-ups of three distinct fixed points, in a symmetric way.

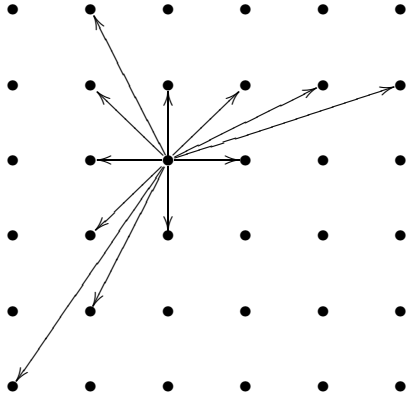
\begin{figure}[htbp]
\label{fig3}
\psset{unit=0.95cm}
\begin{pspicture}(-3.5,-5.5)(3.5,0.5)
$$
\xymatrix @M=0mm{
\bullet & \bullet & \bullet & \bullet & \bullet& \bullet \\
\bullet & \bullet & \bullet & \bullet & \bullet& \bullet \\
 \bullet & \bullet & \bullet\ar[u]\ar[r] \ar[ru]\ar[rru]\ar[rrru]\ar[l]\ar[lu]\ar[luu]\ar[llddd]\ar[ldd]\ar[ld]\ar[d] & \bullet & \bullet & \bullet\\
\bullet & \bullet & \bullet & \bullet & \bullet & \bullet\\
\bullet & \bullet & \bullet & \bullet & \bullet & \bullet\\
\bullet & \bullet & \bullet & \bullet & \bullet & \bullet
}
$$
\end{pspicture}
\caption{Fan $\Sigma_3$}
\end{figure}
% \newpage
The next one is obtained by two successive blow-ups of $\P^1\times\P^1$ at four fixed points.

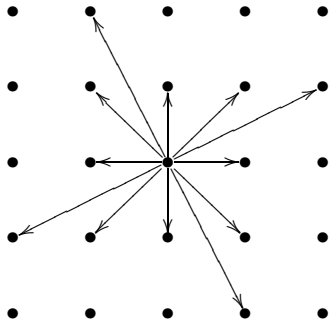
\begin{figure}[htbp]
\label{fig4}
\psset{unit=0.95cm}
\begin{pspicture}(-3.5,-4.5)(3.5,0)
$$
\xymatrix @M=0mm{
\bullet & \bullet & \bullet & \bullet & \bullet\\
\bullet & \bullet & \bullet & \bullet & \bullet \\
 \bullet & \bullet & \bullet\ar[u]\ar[r] \ar[ru]\ar[rru]\ar[l]\ar[lu]\ar[luu]\ar[lld]\ar[ld]\ar[d]\ar[rd]\ar[rdd] & \bullet &  \bullet\\
\bullet & \bullet & \bullet & \bullet & \bullet \\
 \bullet & \bullet & \bullet & \bullet  & \bullet
% \bullet & \bullet & \bullet & \bullet & \bullet & \bullet
}
$$
\end{pspicture}
\caption{Fan $\Sigma_4$}
\end{figure}
\newpage
The last one is obtained from $\Sigma_6'$ by two successive blow-ups at six fixed points.

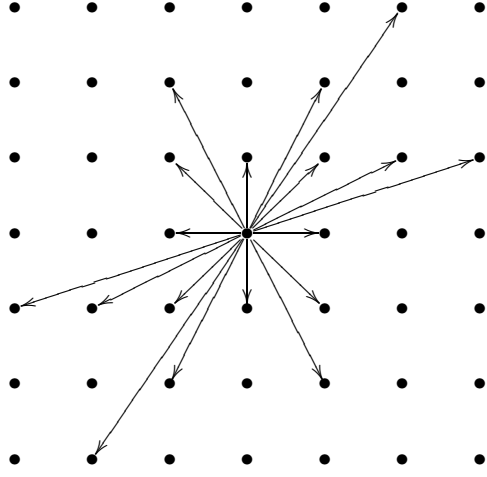
\begin{figure}[htbp]
\label{fig6}
\psset{unit=0.95cm}
\begin{pspicture}(-3.5,-6.5)(3.5,0.5)
$$
\xymatrix @M=0mm{
\bullet & \bullet & \bullet & \bullet & \bullet& \bullet& \bullet \\
\bullet & \bullet & \bullet & \bullet & \bullet& \bullet& \bullet \\
\bullet & \bullet & \bullet & \bullet & \bullet& \bullet& \bullet \\
 \bullet& \bullet & \bullet & \bullet\ar[u]\ar[r]\ar[l]\ar[d]\ar[ru]\ar[ld]\ar[rru]\ar[rrru]\ar[ruu]\ar[rruuu]\ar[luu]\ar[lu]\ar[lld]\ar[llld]\ar[ddl]\ar[dddll]\ar[rd]\ar[rdd]& \bullet & \bullet & \bullet\\
\bullet & \bullet & \bullet & \bullet & \bullet & \bullet& \bullet\\
\bullet & \bullet & \bullet & \bullet & \bullet & \bullet& \bullet\\
\bullet & \bullet & \bullet & \bullet & \bullet & \bullet& \bullet
}
$$
\end{pspicture}
\caption{Fan $\Sigma_6$.}
\end{figure}

\end{example}

\begin{proposition}
 \label{prop:all groups arise}
 Let $p\in \lbrace 1,2,3,4,6 \rbrace$. Then we have $$\Aut(\Z^2,\Sigma_p)\simeq C_p$$ and $$\Aut(\Z^2,\Sigma_p')\simeq D_p.$$
\end{proposition}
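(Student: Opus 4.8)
The plan is to prove each of the two isomorphisms by a double inclusion: exhibit enough explicit lattice automorphisms for the lower bound, then exploit the combinatorial rigidity of a complete smooth two‑dimensional fan for the upper bound.

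\emph{Lower bound.} In a lattice basis adapted to the fan at hand I would write down explicit elements of $\GL_2(\Z)$: for the ``rotation'' $r_p$ one can take $r_2=-\Id$, $r_3=\left[\begin{smallmatrix}0&-1\\ 1&-1\end{smallmatrix}\right]$, $r_4=\left[\begin{smallmatrix}0&-1\\ 1&0\end{smallmatrix}\right]$ and $r_6=-r_3$, of orders $2,3,4,6$ respectively. A direct check on the finitely many primitive ray generators listed in the figures above shows that $r_p$ permutes $\Sigma_p(1)$, resp.\ $\Sigma_p'(1)$; since it preserves the set of rays it preserves the cones, hence the fan. This gives $C_p\subseteq\Aut(\Z^2,\Sigma_p)$ and $C_p\subseteq\Aut(\Z^2,\Sigma_p')$. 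For the primed fans, which are drawn symmetrically across an axis, one checks likewise that some reflection $s_p\in\GL_2(\Z)$ with $s_p^2=\Id$ and $\det(s_p)=-1$ (a coordinate permutation or a sign change in the suitable basis) preserves $\Sigma_p'$, so that $\langle r_p,s_p\rangle\simeq D_p\subseteq\Aut(\Z^2,\Sigma_p')$.

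\emph{Upper bound.} Here I would use two structural facts about $G:=\Aut(N,\Sigma)$ for a complete smooth two‑dimensional fan. First, $G$ acts faithfully on the finite cyclically ordered set $\Sigma(1)$: any $g\in G$ permutes the rays and, being $\R$‑linear, respects their cyclic order up to a global reversal, while a pair of adjacent rays is a $\Z$‑basis of $N$ (smoothness) whose image is again an adjacent pair, so $g$ is determined by the image of one such pair. Together with the crystallographic restriction (Lemma \ref{lem:classification orders}) this already forces $G\simeq C_k$ or $D_k$ with $k\in\{1,2,3,4,6\}$. Second, $g$ necessarily preserves the cyclic ``necklace'' $(a_1,\dots,a_d)$ of self‑intersection numbers $a_i=D_i^2$ of the torus‑invariant prime divisors, characterised by $u_{i-1}+u_{i+1}=-a_iu_i$; hence $G$ embeds into the dihedral symmetry group of this necklace. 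It then remains to compute the necklaces of the ten fans — routine, since an equivariant point blow‑up inserts an entry $-1$ into the sequence and lowers its two neighbours by $1$ — and to read off their symmetries: the necklace of $\Sigma_p'$ has an order‑$p$ rotation, a reflection, and nothing more, giving $D_p$; that of $\Sigma_p$ has the same order‑$p$ rotation but, precisely because of the non‑symmetric choice of blow‑up centres, no reflection and no larger rotation, giving $C_p$. Combining with the lower bound yields the proposition. This is also the step that separates $C_2=\langle-\Id\rangle$ from $D_1=\langle\text{reflection}\rangle$: the necklace of $\Sigma_2$ carries a nontrivial rotation but no reflection, that of $\Sigma_1'$ the reverse.

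I expect this last ``nothing more'' verification to be the main obstacle. Once the self‑intersection necklaces are in hand, excluding a reflection of $\Sigma_p$ and a rotation of order larger than $p$ of either $\Sigma_p$ or $\Sigma_p'$ is an elementary convex‑geometric check, but it is genuinely case‑by‑case and is the only part of the argument that does not run uniformly in $p$.
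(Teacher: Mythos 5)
Your route is genuinely different from the paper's sketch (which fixes a maximal cone $\sigma$, solves for the matrix of $g$ from each candidate image cone $g\cdot\sigma$ using smoothness, and then discards candidates by checking that all rays go to rays), and in principle it is cleaner: the invariance of the self-intersection necklace $(a_i)$, $u_{i-1}+u_{i+1}+a_iu_i=0$, is correct, and your upper bound is in fact automatically sharp, since consecutive primitive generators form a $\Z$-basis and the recursion $u_{i+1}=-a_iu_i-u_{i-1}$ reconstructs the fan, so every rotation or reflection of the necklace lifts to a lattice automorphism and $\Aut(N,\Sigma)$ is exactly the symmetry group of the necklace. So the method would reduce the proposition to ten necklace computations.

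The gap is precisely the step you postpone ("and nothing more"): you assert the outcome of that verification, and for $p=2$ the assertion fails for the fans as depicted. Reading Figure 4, $\Sigma_2'$ has rays $\pm(1,0),\pm(0,1),\pm(1,1),\pm(2,1)$, whose necklace is $(-2,-1,-2,-1,-2,-1,-2,-1)$; this is invariant under rotation by two beads, and indeed $g=\left(\begin{smallmatrix}1&-2\\ 1&-1\end{smallmatrix}\right)$ satisfies $g^2=-\Id$ and permutes the eight rays, so $\Aut(\Z^2,\Sigma_2')$ contains an element of order $4$ (via the shear $(x,y)\mapsto(x-y,y)$ this fan is the fan of the blow-up of $\P^1\times\P^1$ at all four fixed points, with lattice automorphism group $D_4$, not $D_2$). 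Likewise $\Sigma_2$ of Figure 7 (the same rays together with $\pm(3,2)$) has necklace $(-2,-2,-1,-3,-1)$ repeated twice, which is palindromic about the beads at $\pm(1,1)$, and the reflection $\left(\begin{smallmatrix}-2&3\\ -1&2\end{smallmatrix}\right)$ preserves the fan, so that group contains $D_2$ rather than only $C_2$. For the remaining eight fans the necklaces do come out as you predict — $(0,2,0,-2)$ and $(0,2,-1,-1,-3)$ for $p=1$, $(1,1,1)$ and $(-3,-1,-2,-2)$ repeated three times for $p=3$, $(0,0,0,0)$ and $(-3,-1,-2)$ repeated four times for $p=4$, all $-1$ and $(-4,-1,-2)$ repeated six times for $p=6$ — so your argument closes those cases; but as written the $p=2$ case does not go through, and no argument can yield $C_2$ and $D_2$ for the particular fans $\Sigma_2,\Sigma_2'$ drawn in the paper: to finish along your lines one must replace them by fans whose necklaces admit only a half-turn (resp. a half-turn and a single reflection) as symmetries.
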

\begin{proof}
 The proof is straightforward, although a bit tedious, so we will only give a sketch of it. Fix an orientation of $\R^2$, and pick a two dimensional cone $\sigma$ of $\Sigma_p$ (the proof goes the same for $\Sigma_p'$). Let $g\in \Aut(\Z^2,\Sigma_p)$. Then $g\cdot \sigma\in\Sigma_p(2)$. Moreover, $g$ maps the two facets of $\sigma$ to the two facets of $g\cdot \sigma$. As the fan $\Sigma_p$ is explicitly described, for each possible $\tau=g\cdot\sigma\in\Sigma_p(2)$, we deduce the possible matrix coefficients for $g$ :
 $$
 g=\left[                      
                                \begin{array}{cc}
                                      a_\tau & b_\tau \\
                                      c_\tau & d_\tau
                              \end{array}
                              \right]\in\GL_2(\Z).
 $$
 For any possible $\tau$, one can then explicitly compute the images of the rays of $\Sigma_p$ by $g$. They should all belong to $\Sigma_p(1)$, and this enables to select the allowed $\tau$'s for $g$ to be a lattice automorphism of $\Sigma_p$. The result then follows easily.
\end{proof}

\begin{remark}
\label{rem:auto fans hirzebruch}
 One can also show similarly that for any Hirzebruch surface $\FF_n$ with $n\geq 1$, and with fan $\Sigma_{\FF_n}$, one has $\Aut(N,\Sigma_{\FF_n})\simeq D_1$.
\end{remark}

 \subsection{Classification of foldable surfaces}
 \label{sec:classification}
 We first recall the definition :
 \begin{definition}
  \label{def:foldable surface}
 We will say that a smooth and complete two dimensional fan is {\it foldable} if its lattice automorphism group contains a non-trivial cyclic group. A {\it foldable surface} is a toric surface whose fan is foldable.
 \end{definition}
We now proceed to a partial classification of foldable toric surfaces. Notice that if $G\subset \Aut(N,\Sigma)$, then we can find a $G$-action on $X$, faithfull as soon as $\Aut(X)\simeq T\rtimes \Aut(N,\Sigma)
$.
 \begin{definition}
 A $G$-equivariant blow-up of $X$ is a blow-up of $X$ along a $G$-orbit of torus fixed points. We will say that a smooth toric surface $\tilde X$ is obtained from $X$ by successive $G$-equivariant blow-ups if there is a sequence of $G$-equivariant blow-ups : $$\tilde X= X_k \to X_{k-1} \to \ldots \to X_1 \to X_0=X.$$
 \end{definition}
Denote by $\mathrm{Bl}_{p_1,p_2,p_3}(\P^2)$ the toric surface associated to the fan of Figure $5$, that is the blow-up of $\P^2$ at its three torus fixed points.
 \begin{proposition}
  \label{prop:classification}
  Let $X$ be a foldable surface with fan $\Sigma$. Assume that $\Aut(N,\Sigma)$ contains a subgroup isomorphic to $C_p$, $p \geq 2$.
  \begin{itemize}
   \item If $p\in \lbrace 2, 4 \rbrace$, then $X$ is obtained from $\P^1\times \P^1$ by successive $C_p$-equivariant blow-ups,
   \item If $p=3$, then $X$ is obtained from $\P^2$ by successive $C_p$-equivariant blow-ups,
   \item If $p=6$, then $X$ is obtained from $\mathrm{Bl}_{p_1,p_2,p_3}(\P^2)$ by successive $C_p$-equivariant blow-ups.
  \end{itemize}
 \end{proposition}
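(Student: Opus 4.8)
The plan is to argue by induction on the number $d:=|\Sigma(1)|$ of rays of $\Sigma$, reducing $X$ to the asserted minimal model by a sequence of $C_p$-equivariant blow-downs; reversing that sequence exhibits $X$ as a successive $C_p$-equivariant blow-up of the minimal model. By Lemma~\ref{lem:classification orders} we have $p\in\{2,3,4,6\}$. Fix a generator $g$ of a copy of $C_p$ inside $\Aut(N,\Sigma)$. Again by Lemma~\ref{lem:classification orders}, together with the convention that a $C_2$ acts through $-\Id$, the automorphism $g$ equals $-\Id$ when $p=2$ and is conjugate over $\R$ to the rotation of angle $2\pi/p$ when $p\in\{3,4,6\}$; in every case $\det g=1$ and no nontrivial power of $g$ fixes a ray (a ray is a half-line, and $-\Id$ or a nonzero rotation fixes no half-line). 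Hence $\langle g\rangle$ acts freely on $\Sigma(1)$, so $p\mid d$; and since $g$ preserves the cyclic order of the rays it acts on $\Sigma(1)$ as a cyclic shift of order exactly $p$. In particular two rays lie in a common $\langle g\rangle$-orbit and are adjacent in $\Sigma$ only if $d=p$.

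The inductive step rests on two observations. First, since $g$ is a lattice automorphism of $\Sigma$ it sends the wall relation $u_{\rho^-}+u_{\rho^+}=u_{\rho}$ characterising a $(-1)$-ray $\rho$ (one of self-intersection $-1$) to the analogous relation at $g\rho$; thus $g$ permutes the set of $(-1)$-rays, which is therefore a union of free $\langle g\rangle$-orbits. Second, if $d>p$ then by the previous paragraph no two rays of a $\langle g\rangle$-orbit $O$ of $(-1)$-rays are adjacent, so the rays of $O$ may be contracted one at a time, each remaining a $(-1)$-ray until it is removed; the outcome is the fan of a smooth complete toric surface $X'$ with $\langle g\rangle\subseteq\Aut(N,\Sigma')$ and $|\Sigma'(1)|=d-p$, and $X$ is recovered as the blow-up of $X'$ along a single $C_p$-orbit of $p$ torus fixed points, i.e.\ as a $C_p$-equivariant blow-up.

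Now set $d_0=3$ if $p=3$, $d_0=4$ if $p\in\{2,4\}$, and $d_0=6$ if $p=6$. If $d>d_0$, then $p\mid d$ forces $d\ge d_0+p\ge 6$, so $X$ has more than four rays and is hence neither $\P^2$ nor a Hirzebruch surface; by the structure theory of smooth complete toric surfaces (\cite[Theorem~10.4.3]{CLS}) it therefore carries a $(-1)$-ray, and contracting its $\langle g\rangle$-orbit as above replaces $X$ by a smaller smooth complete $C_p$-symmetric toric surface with $d-p\ge d_0$ rays; we then recurse. It remains to identify the $C_p$-symmetric surfaces with $d=d_0$ rays. If $d=3$ then $X=\P^2$, which satisfies $\Aut(N,\Sigma)=D_3\supseteq C_3$. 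If $d=4$ then $X=\FF_n$ for some $n\ge 0$, and by Proposition~\ref{prop:all groups arise} and Remark~\ref{rem:auto fans hirzebruch} one has $\Aut(N,\Sigma_{\FF_n})\cong D_4$ if $n=0$ while $\Aut(N,\Sigma_{\FF_n})$ is generated by a reflection for $n\ge 1$; in the latter case it contains neither $-\Id$ nor an element of order $4$, so the hypothesis forces $n=0$ and $X=\FF_0=\P^1\times\P^1$. Finally, if $d=6$ and $p=6$, then $\langle g\rangle$ acts freely and transitively on the six rays, and since it acts as a shift of order $6$ it acts by $\pm1$, so for any ray $\rho_0$ the vectors $u_{\rho_0}$ and $gu_{\rho_0}$ are neighbouring ray generators and hence a $\Z$-basis of $N$ by smoothness. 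In that basis the conditions $\det g=1$ and $\trace g=2\cos(\pi/3)=1$ force $g=\left(\begin{smallmatrix}0&-1\\1&1\end{smallmatrix}\right)$, whence $\Sigma(1)=\{g^ku_{\rho_0}\}_{k=0}^{5}=\{(1,0),(0,1),(-1,1),(-1,0),(0,-1),(1,-1)\}$, the (essentially unique) fan with six rays all of self-intersection $-1$, namely that of $\mathrm{Bl}_{p_1,p_2,p_3}(\P^2)$. Reversing the chain of contractions completes the proof.

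The substance of the argument — and the only ingredient not already present in the classical minimal model theory for toric surfaces — is the interplay in the second paragraph: a single $(-1)$-ray automatically carries its whole $C_p$-orbit of $(-1)$-rays, and the adjacency dichotomy ``$d=p$ or no orbit collisions'' guarantees that this orbit can be contracted while staying within smooth complete $C_p$-symmetric toric surfaces. The only place requiring an explicit (though short) computation is the $p=6$ base case, where one pins down the hexagonal fan; the cases $d_0=3$ and $d_0=4$ are handled by results of Section~\ref{sec:foldable toric surfaces}.
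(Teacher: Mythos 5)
Your proof is correct and follows essentially the same strategy as the paper's: induct on $|\Sigma(1)|$ by contracting a $C_p$-orbit of $(-1)$-rays equivariantly, with base cases $\P^2$, the Hirzebruch surfaces (forcing $\P^1\times\P^1$ via Remark~\ref{rem:auto fans hirzebruch}), and the hexagonal fan for $p=6$. In fact you are more explicit than the paper on two points it leaves implicit, namely the non-adjacency of rays within an orbit when $d>p$ (which justifies contracting the whole orbit) and the identification of the $p=6$ base case with $\mathrm{Bl}_{p_1,p_2,p_3}(\P^2)$.
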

\begin{proof}
 Through its representation 
 $$C_p \to \GL_\R(N_\R),
 $$
 the subgroup $C_p$ of $\Aut(N,\Sigma)$ acts on $N_\R$ by rotations (once an invariant euclidean metric is fixed), hence has no fixed points. We deduce that the action of $C_p$ on $\Sigma(1)$ is free, and then $p$ divides $\vert \Sigma(1)\vert$, the number of rays of $\Sigma$.
 
 On the other hand, from the classification of toric surfaces (see e.g. \cite[Chapter 10]{CLS}), we must have $ \vert\Sigma(1)\vert\geq 3$. 
%  and if $\vert\Sigma(1)\vert\in\lbrace 3,4\rbrace$ we must have
%  $$p\in \lbrace 3,4,6 \rbrace.$$
%  In that situation,  
Moreover, if $\vert\Sigma(1)\vert=3$, then $X\simeq \P^2$, while if $\vert\Sigma(1)\vert=4$, then $X\simeq \FF_n$ for some $n\in \N$. By Remark \ref{rem:auto fans hirzebruch}, we see that $X\simeq \P^1\times \P^1$ in that case. 
% We will see shortly that if $p=6$, then $X\simeq \mathrm{Bl}_{p_1,p_2,p_3}(\P^2)$.
 
 So we may assume that $\vert \Sigma(1)\vert \geq 5$, and that $p$ divides $\vert \Sigma(1)\vert$.  By the classification of toric surfaces, we know that $X$ is obtained by successive blow-ups from $\FF_n$, $n\geq 0$, or from $\P^2$. Then, there is a ray $\rho_i$ in $\Sigma(1)$ generated by the sum of two primitive elements 
 $$u_i=u_{i-1}+u_{i+1}$$
 generating adjacent rays $\rho_{i-1}$ and $\rho_{i+1}$ in $\Sigma(1)$ (that is both $\rho_i+\rho_{i-1}$ and $\rho_i+\rho_{i+1}$ belong to $\sigma(2)$). Note that
 $$
 \det(u_{i+1},u_{i-1})=\det(u_i-u_{i-1},u_{i-1})=\det(u_i,u_{i-1})=1
 $$
 so the contraction of the divisor associated to the ray $\rho_i$ to a point is smooth. By linearity, for any $g\in C_p$ we have 
 $$g\cdot u_i=g\cdot u_{i-1}+g\cdot u_{i+1}, $$
 hence we can contract to points the $C_p$-orbit of the torus invariant divisor $D_{\rho_i}$ associated to $\rho_i$ and obtain a smooth toric surface whose fan still admits $C_p$ as a subgroup of its lattice symmetry group. By an inductive argument on $\vert \Sigma(1)\vert$, we obtain the result.
\end{proof}
From the classification, we obtain the following proposition.
\begin{proposition}
 \label{prop:existence csck}
 Let $X$ be a foldable surface. Then $X$ admits a cscK metric.
\end{proposition}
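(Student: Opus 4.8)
The strategy is to feed the partial classification of Proposition~\ref{prop:classification} into an iterated application of the equivariant blow‑up criterion, Theorem~\ref{theo:equiv blow up}. Fix a non‑trivial cyclic subgroup $C_p\subset\Aut(N,\Sigma)$ with $p\in\{2,3,4,6\}$. If $X\simeq\P^2$ or $X\simeq\P^1\times\P^1$ we are done (Fubini--Study, resp.\ product of round spheres), so by Proposition~\ref{prop:classification} we may present $X$ as the top of a chain of $C_p$-equivariant blow‑ups
$$
X=X_k\to X_{k-1}\to\cdots\to X_1\to X_0,
$$
with $X_0=\P^1\times\P^1$ when $p\in\{2,4\}$, $X_0=\P^2$ when $p=3$, and $X_0=\mathrm{Bl}_{p_1,p_2,p_3}(\P^2)$ when $p=6$. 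Each such $X_0$ admits a cscK metric $\om_0$ in an $\Aut(X_0)$-invariant K\"ahler class $\Om_0$: the K\"ahler--Einstein metric in the anticanonical class of $\P^2$, resp.\ $\P^1\times\P^1$, resp.\ of the degree‑six toric del Pezzo $\mathrm{Bl}_{p_1,p_2,p_3}(\P^2)$ (whose Futaki invariant vanishes because $\Aut(N,\Sigma_{X_0})$ contains a rotation of order six and hence fixes no non‑zero vector of $M_\C$). I would then induct on $i$ with hypothesis: $X_i$ carries a cscK metric in a $C_p$-invariant K\"ahler class $\Om_i$.

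For the inductive step $X_{i+1}=\mathrm{Bl}_{Z_i}(X_i)\to X_i$, where $Z_i$ is the blown‑up $C_p$-orbit of torus fixed points, one applies Theorem~\ref{theo:equiv blow up} with a finite group $G_i\subset\Aut(X_i)$. Conditions (i) and (ii) hold with $G_i\supseteq C_p$ automatically: $Z_i$ is a $C_p$-orbit, and the output class $\Om_{i+1}=\pi_i^*\Om_i-\ep\,c_1(E_i)$ is again $C_p$-invariant since $\pi_i$ is $C_p$-equivariant and $E_i$ is $C_p$-invariant. The real content is condition (iii). For $i\geq 1$ one has $\Aut^0(X_i)=T$: indeed $X_1$ is either the del Pezzo $\mathrm{dP}_6$ (when $p\in\{2,3\}$) or the blow‑up of $\P^1\times\P^1$ at its four torus fixed points (when $p=4$; the complete fan on $\pm e_1,\pm e_2,\pm e_1\pm e_2$), and one checks directly that these carry no Demazure root, while adjoining a ray to a fan can never create a Demazure root, so no $X_i$ with $i\geq1$ has one. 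Hence $\Lie(\Aut(X_i))=N_\C$ for $i\geq 1$, on which $C_p$ acts as a non‑trivial rotation with no non‑zero fixed vector, so (iii) holds with $G_i=C_p$. When $p=6$ this is already true at the bottom of the chain, since $\Aut^0(\mathrm{Bl}_{p_1,p_2,p_3}(\P^2))=T$, and the induction runs through without further ado.

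The one genuinely delicate point — which I expect to be the main obstacle — is the first step $X_1\to X_0$ when $p\in\{2,3,4\}$, i.e.\ when $\Aut^0(X_0)\supsetneq T$: there the Demazure root spaces contribute non‑zero $C_p$-fixed holomorphic vector fields (e.g.\ for $\P^2$, the traceless part of the centraliser in $\mathfrak{sl}_3$ of the cyclic coordinate permutation), so condition (iii) fails for $G=C_p$ and Theorem~\ref{theo:equiv blow up} does not apply verbatim. I would get around this by enlarging the group: choose a torsion point $t$ of the torus $T\subset\Aut(X_0)$ on which no Demazure root becomes trivial — possible because the Demazure roots form a finite set of non‑zero characters of $T$ — so that $\mathrm{Ad}_t$ annihilates every root space, and set $G_0:=\langle g,t\rangle$, where $g$ is a lift of a generator of $C_p$ to $N_{\Aut(X_0)}(T)=T\rtimes\Aut(N,\Sigma_{X_0})$. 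Then $G_0$ is finite, $Z_0$ and $\Om_0$ are $G_0$-invariant, and any vector of $\Lie(\Aut(X_0))$ fixed by $\mathrm{Ad}(G_0)$ lies in $\Lie(T)=N_\C$ and is fixed by the rotation $\mathrm{Ad}_g$, hence vanishes; so Theorem~\ref{theo:equiv blow up} applies to $X_1\to X_0$ with $G=G_0$ and produces a cscK metric on $X_1$ in a $C_p$-invariant class, starting the induction. (Alternatively, the proof of Theorem~\ref{theo:equiv blow up} only uses the weaker requirement that no non‑zero $G$-fixed vector field vanishes on $Z$; as the $C_p$-fixed fields on $X_0$ above vanish only at points away from the torus fixed points, this already holds for $G=C_p$.) Chaining the steps then equips $X=X_k$ with a cscK metric.
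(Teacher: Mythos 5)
Your argument follows essentially the same route as the paper---the classification of Proposition~\ref{prop:classification} fed into an induction using Theorem~\ref{theo:equiv blow up}---and in fact it supplies the verification of hypothesis (iii) at the first blow-up (where $\Aut^0(X_0)\supsetneq T$ and $C_p$ alone fixes nonzero root-space vectors) that the paper's one-line proof leaves implicit; your enlargement of the group by a torsion element of the torus killing the Demazure root spaces, together with the observation that blow-ups never create Demazure roots, is a correct way to run the induction. The only small caveat is the base case $\mathrm{Bl}_{p_1,p_2,p_3}(\P^2)$ for $p=6$: vanishing of the Futaki invariant does not by itself give existence of the K\"ahler--Einstein metric there, so you should instead invoke the classical existence result for the degree-six toric del Pezzo (or, as in your own first step for $p=3$, obtain a cscK metric on it via Theorem~\ref{theo:equiv blow up} applied to $\P^2$ with a suitably enlarged finite group, noting that for the $p=6$ chain one needs the $C_6$-invariant anticanonical class rather than a class of the form $\pi^*\Om-\ep c_1(E)$).
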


\begin{proof}
 This is simply an induction on the number of $C_p$-equivariant blow-ups in Proposition \ref{prop:classification}, using Theorem \ref{theo:equiv blow up} at each step, and the fact that $\P^1\times\P^1$, $\P^2$ and $\mathrm{Bl}_{p_1,p_2,p_3}(\P^2)$ admit cscK metrics in $C_p$-equivariant classes (for $\mathrm{Bl}_{p_1,p_2,p_3}(\P^2)$, it follows again from Theorem \ref{theo:equiv blow up} applied to $\P^2$).
\end{proof}

 \section{The local cscK moduli around a foldable  surface}
 \label{sec:local moduli construction}
  Let $X$ be a smooth and complete toric surface with fan $\Sigma$ and one parameter subgroups lattice $N$. We will denote by $(u_i)_{0\leq i \leq r} $ the primitive elements of $N$ that generate the rays 
  $$\rho_i=\R_+\cdot u_i \in \Sigma(1),$$ 
  labeled in the counterclockwise order, so that for each $i$, $(u_i, u_{i+1})$ is a positively oriented $\Z$-basis of $N$, and $\rho_i+\rho_{i+1}\in\Sigma(2)$, with the convention $u_0=u_r$. 
 \subsection{Deformation theory of toric surfaces}
 \label{sec:deformations of toric surfaces}
 We recall here some results from \cite{Ilten11} regarding the deformation theory of toric surfaces. As $X$ is toric, dualizing the generalized Euler sequence yields (see e.g. \cite[Theorem 8.1.6]{CLS}) :
 $$
 0\to \mathrm{Pic}(X)^*\otimes_\Z\cO_X\to \bigoplus_{i=1}^r\cO_X(D_{\rho_i})\to TX\to 0,
 $$
 where $\mathrm{Pic}(X)$ stands for the Picard group of $X$ and $D_{\rho_i}$ is the torus invariant divisor associated to the ray $\rho_i\in\Sigma(1)$. This sequence is actually a short exact sequence of $\Aut(X)$-equivariant bundles, and induces a long exact sequence of $\Aut(X)$-modules in cohomology. From a classical vanishing theorem (see e.g.  \cite[Theorem 9.3.2]{CLS}), we have $H^1(X,\cO_X)=0$ and $H^2(X,\cO_X)=0$, and this long exact sequence induces an isomorphism of $\Aut(X)$-modules :
  \begin{equation}
   \label{eq:isom ilten}
 \bigoplus_{i=1}^r H^1(X,\cO_X(D_{\rho_i}))\simeq H^1(X,TX).
  \end{equation}
 In particular, the action of the torus $T$ on $X$ induces a representation of $T$ on 
 $$
 V:=H^1(X,TX)
 $$
 compatible with the above isomorphism. The complete reducibility theorem for torus actions (see \cite[Proposition 1.1.2]{CLS} and references therein) provides a weight space decomposition
 that we will denote by
 $$
 V=\bigoplus_{m\in M} V_m.
 $$
 More precisely, $V$ splits into $T$-invariant subspaces $V_m\subset V$ where the $T$-action on $V_m$ is given by
 $$
 t\cdot x= \chi^m(t)\,x
$$
for $t\in T$, $x\in V_m$, and where $\chi^m : T \to \C^*$ is the character associated to the weight $m\in M$. Using the isomorphism (\ref{eq:isom ilten}), Ilten obtained for each weight $m\in M=\Hom_\Z(N,\Z)$ (\cite[Corollary 1.5]{Ilten11}) :
 \begin{equation}
  \label{eq:iltenformula}
 \dim(V_m)=\sharp \displaystyle\left\{ \rho_i\in\Sigma(1)\: \left\vert \begin{array}{c}
             \langle m, u_i \rangle = -1 \\
             \langle m, u_{ i\pm 1} \rangle <0
           \end{array}
 \right.  \right\}.
 \end{equation}
 where $\sharp$ stands for the cardinal.
%  Moreover, by \cite[Corollary 1.5]{Ilten11}, $H^2(X,TX)=0$, so that the deformation theory of $X$ is unobstructed.
 \begin{example}
 \label{example:deformation spaces}
 We will provide explicit examples of weight space decompositions for $V=H^1(X,TX)$, when $X$ is the smooth toric surface associated to a foldable fan. First, $\P^1 \times \P^1$, $\P^2$ and  $\mathrm{Bl}_{p_1,p_2,p_3}(\P^2)$ are rigid, as can be checked directly using Formula (\ref{eq:iltenformula}). In those cases, $V=0$. Then, denote by $Y_2$ the foldable toric surface associated to the fan $\Sigma_2'$ of Figure 4.
 We also introduce (see Figures 11 and 12) the foldable toric surfaces $Y_4$ and $Y_3$ associated to the following fans :
 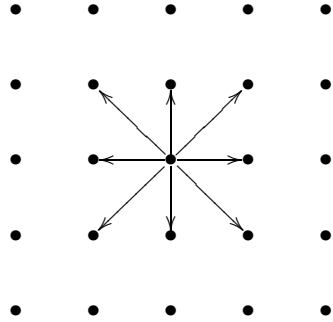
\begin{figure}[htbp]
\label{figY4}
\psset{unit=0.95cm}
\begin{pspicture}(-3.5,-4.5)(3.5,0.5)
$$
\xymatrix @M=0mm{
\bullet & \bullet & \bullet & \bullet & \bullet \\
\bullet & \bullet & \bullet & \bullet & \bullet \\
 \bullet & \bullet & \bullet\ar[u]\ar[r]\ar[l]\ar[d]\ar[ru]\ar[ld]\ar[lu]\ar[rd]& \bullet & \bullet \\
\bullet & \bullet & \bullet & \bullet & \bullet \\
\bullet & \bullet & \bullet & \bullet & \bullet 
}
$$
\end{pspicture}
\caption{Fan of $Y_4$.}
\end{figure}
 
\begin{figure}[htbp]
\label{figY3}
\psset{unit=0.95cm}
\begin{pspicture}(-3.5,-5.5)(3.5,0)
$$
\xymatrix @M=0mm{
\bullet & \bullet & \bullet & \bullet & \bullet& \bullet \\
\bullet & \bullet & \bullet & \bullet & \bullet& \bullet \\
 \bullet & \bullet & \bullet\ar[u]\ar[r] \ar[ru]\ar[rru]\ar[l]\ar[lu]\ar[ldd]\ar[ld]\ar[d] & \bullet & \bullet & \bullet\\
\bullet & \bullet & \bullet & \bullet & \bullet & \bullet\\
\bullet & \bullet & \bullet & \bullet & \bullet & \bullet\\
\bullet & \bullet & \bullet & \bullet & \bullet & \bullet
}
$$
\end{pspicture}
\caption{Fan of $Y_3$}
\end{figure}
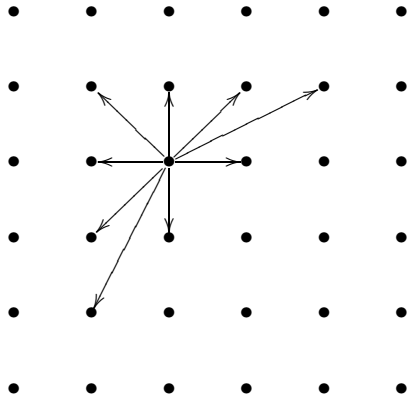
One can compute directly that for $i\in\lbrace 2,3,4\rbrace$,
$$\Aut(N,\Sigma_{Y_i})\simeq D_i.$$
  Set $V_i=H^1(Y_i,TY_i)$. Denote by $(e_1,e_2)$ the standard basis of $N=\Z^2$ and by $(e_1^*,e_2^*)$ the dual basis of $M=(\Z^2)^*\simeq \Z^2$. Testing the conditions in Formula (\ref{eq:iltenformula}) for each ray generator gives 
 \begin{equation}
  \label{eq:example Vs}
 \begin{array}{ccc}
 V_2 & = & V^1_{+e_2^*}\oplus V^1_{-e_2^*}\oplus V^1_{-e_1^*+e_2^*}\oplus V^1_{e_1^*-e_2^*}\\
  & &  \\
V_3 & = & V^2_{-e_2^*}\oplus V^2_{e_1^*}\oplus V^2_{-e_1^*+e_2^*} \\
 & & \\
 V_4 & =& V^1_{+e_1^*}\oplus V^1_{-e_1^*}\oplus V^1_{+e_2^*}\oplus V^1_{-e_2^*}\\
  & & 
 \end{array}
 \end{equation}
 where $V^j_m$ stands for a $j$-dimensional weight $m$ representation of $T$. 
 \begin{remark}
 \label{rem:Gaction}
  One can check by unraveling the isomorphisms used in \cite[Section 1]{Ilten11} that the $C_3$-action on $Y_3$ induces an action on the coordinates $\chi_j^m$ of $V_3$, with two orbits given by 
  $$\lbrace \chi_j^{-e_2^*},\chi_j^{e_1^*}, \chi_j^{-e_1^*+e_2^*}\rbrace, \: j\in\lbrace 1,2\rbrace$$
  where $\chi_j^m$, for $1\leq j\leq \dim(V_m)$, stand for generators of the weight $m$ space $V_m$. Similarly, the $D_2$ (resp. $D_4$) action on $Y_2$ (resp. $Y_4$) induces a transitive action on the coordinates of $V_2$ (resp. $V_4$).
 \end{remark}

 \end{example}

 \subsection{The toric GIT quotient}
 \label{sec:toric quotient}
 We will assume from now on that $X$ carries a cscK metric, so by Proposition \ref{prop:reductive aut group toric surface}, we have 
 $$
 \Aut(X)\simeq T \rtimes \Aut(N,\Sigma).
 $$
 We will set 
 $$
 G:= \Aut(N,\Sigma).
 $$
 From Proposition \ref{prop:moduliaroundtoric}, the local moduli space $\cM_X$ of polarised cscK surfaces around $[X]\in\cM$ is given by a neighborhood of the origin in the GIT quotient of $V$ by $T\rtimes G$. Assume now that $X$ is foldable. To prove Theorem \ref{theo:intro}, it is then enough to show that the GIT (or categorical) quotient
 $$
 W:=V//T
 $$
is toric, Gorenstein and terminal. We will end this section by showing that $W$ is indeed an affine toric variety. We first need the following lemma, where by $C_p\subset G$ we mean that there is an injection of $C_p$ in $G$.
\begin{lemma}
 \label{lem:minimal models}
 Let $X$ be a foldable toric surface. Then one of the following holds :
 \begin{itemize}
  \item  If $X$ is rigid, then $X\in\lbrace \P^2, \P^1\times \P^1 , \mathrm{Bl}_{p_1,p_2,p_3}(\P^2)\rbrace$.
  \item If $X$ is not rigid and $C_2\subset G$, $X\in\lbrace Y_2, Y_4 \rbrace$ or is obtained from $Y_2$ or $Y_4$ by successive $C_2$-equivariant blow-ups.
  \item If $X$ is not rigid and $C_3\subset G$, $X=Y_3$ or is obtained from $Y_3$ by successive $C_3$-equivariant blow-ups.
 \end{itemize}
\end{lemma}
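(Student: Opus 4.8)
The plan is to run Proposition~\ref{prop:classification} one blow-up at a time, to identify the surfaces occurring after the first and after the second blow-up, and to control $h^1(X,TX)$ along the resulting tower by means of Ilten's formula~(\ref{eq:iltenformula}). Since $X$ is foldable, $G=\Aut(N,\Sigma)$ is one of the eight groups $C_2,C_3,C_4,C_6,D_2,D_3,D_4,D_6$ of Proposition~\ref{prop:intro fans}, and each of them contains the rotation subgroup $C_p$ for some $p\in\{2,3,4,6\}$; since $C_4,C_6\supset C_2$ and $C_6\supset C_3$, in every case $C_2\subset G$ or $C_3\subset G$. It therefore suffices to treat these two cases, and the first bullet will follow by combining the outcomes of the other two with the rigidity computations made along the way.

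The point on which everything hinges is a monotonicity statement: if $\tilde X\to X$ is the blow-up of a smooth complete toric surface at a torus fixed point, then $h^1(\tilde X,T\tilde X)\ge h^1(X,TX)$; consequently every iterated $C_p$-equivariant blow-up of a non-rigid toric surface is again non-rigid. I would deduce this directly from~(\ref{eq:iltenformula}): such a blow-up is the star subdivision of a two-dimensional cone $\langle u_j,u_{j+1}\rangle$, which inserts the ray $u'=u_j+u_{j+1}$ between $u_j$ and $u_{j+1}$ and leaves every other ray, together with its two cyclic neighbours, unchanged; given $m\in M$, a ray $u_i$ satisfying the two conditions of~(\ref{eq:iltenformula}) for $\Sigma$ still satisfies them for the subdivided fan, the only case needing a word being $i\in\{j,j+1\}$, where one uses $\langle m,u'\rangle=\langle m,u_j\rangle+\langle m,u_{j+1}\rangle<0$. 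Hence the dimension of every $T$-weight space can only grow, so $h^1(\tilde X,T\tilde X)\ge h^1(X,TX)$, and blowing up a whole $C_p$-orbit is a composition of such single blow-ups. (Alternatively, the exact sequence $0\to T\tilde X\to\pi^*TX\to i_*\mathcal{O}_{\mathbb{P}^1}(1)\to 0$, with $i\colon E\cong\mathbb{P}^1\hookrightarrow\tilde X$, together with $R^i\pi_*\mathcal{O}_{\tilde X}=0$ for $i>0$, yields a surjection $H^1(\tilde X,T\tilde X)\twoheadrightarrow H^1(X,TX)$.)

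It then remains to walk along the two towers. If $C_3\subset G$, Proposition~\ref{prop:classification} writes $X$ as a tower of $C_3$-equivariant blow-ups over $\mathbb{P}^2$; as $\mathbb{P}^2$ has a single $C_3$-orbit of torus fixed points, the first blow-up always produces $\mathrm{Bl}_{p_1,p_2,p_3}(\mathbb{P}^2)$, and on the latter the induced $C_3$ — the unique order-$3$ subgroup of its symmetry group $D_6$ — has exactly two orbits of torus fixed points, swapped by the order-$6$ rotation, so the second blow-up produces, up to isomorphism, a surface which a direct computation with the fan of $\mathrm{Bl}_{p_1,p_2,p_3}(\mathbb{P}^2)$ identifies with $Y_3$ (Figure~12). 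Hence either $X\in\{\mathbb{P}^2,\mathrm{Bl}_{p_1,p_2,p_3}(\mathbb{P}^2)\}$, both rigid by~(\ref{eq:iltenformula}), or $X$ is $Y_3$ or an iterated $C_3$-equivariant blow-up of $Y_3$, which is non-rigid by the monotonicity statement and by $V_3\neq 0$ (Example~\ref{example:deformation spaces}). The case $C_2=\langle-\Id\rangle\subset G$ is parallel, now over $\mathbb{P}^1\times\mathbb{P}^1$: its two $C_2$-orbits of torus fixed points are interchanged by the order-$4$ rotation of its symmetry group $D_4$, so the first blow-up again gives $\mathrm{Bl}_{p_1,p_2,p_3}(\mathbb{P}^2)$; on this surface $\langle-\Id\rangle$ has three orbits of torus fixed points, and matching the three blown-up fans with those of $Y_2$ (Figure~4) and $Y_4$ (Figure~11) shows that each of them yields a surface isomorphic to $Y_2$ or to $Y_4$, both non-rigid. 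Hence $X\in\{\mathbb{P}^1\times\mathbb{P}^1,\mathrm{Bl}_{p_1,p_2,p_3}(\mathbb{P}^2)\}$, or $X\in\{Y_2,Y_4\}$, or $X$ is an iterated $C_2$-equivariant blow-up of $Y_2$ or $Y_4$; combining the two cases yields the three bullets. I expect this last step — the fan bookkeeping, in particular checking that the three $\langle-\Id\rangle$-orbits on $\mathrm{Bl}_{p_1,p_2,p_3}(\mathbb{P}^2)$ give exactly $Y_2$ and $Y_4$ and no further model — to be the most laborious part, of the same elementary-but-tedious nature as the proof of Proposition~\ref{prop:all groups arise}.
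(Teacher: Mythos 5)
Your argument is correct and follows essentially the same route as the paper: reduce to the $C_2$/$C_3$ towers of Proposition \ref{prop:classification}, note that the minimal models $\P^2$, $\P^1\times\P^1$, $\mathrm{Bl}_{p_1,p_2,p_3}(\P^2)$ are rigid while $Y_2$, $Y_3$, $Y_4$ are not, and propagate non-rigidity up the tower via the inequality $h^1(\tilde X,T\tilde X)\geq h^1(X,TX)$ for an equivariant blow-up. The only differences are in the level of detail: the paper cites \cite[Corollary 1.6]{Ilten11} for that inequality and treats the identification of the first stages of the tower (through $\mathrm{Bl}_{p_1,p_2,p_3}(\P^2)$, then $Y_2$, $Y_4$ or $Y_3$) as implicit in Proposition \ref{prop:classification}, whereas you verify both directly, and your fan bookkeeping as well as your derivation of the monotonicity from Formula (\ref{eq:iltenformula}) are correct.
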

Recall that $Y_2$, $Y_3$ and $Y_4$ were defined in Example \ref{example:deformation spaces}. 
\begin{proof}
 Note that by Example \ref{example:deformation spaces},  $\P^2$, $\P^1\times \P^1$ and $\mathrm{Bl}_{p_1,p_2,p_3}(\P^2)$ are rigid. If $X$ is not one of those three foldable surfaces, then  from Proposition \ref{prop:classification}, either it belongs to $\lbrace Y_2, Y_3, Y_4 \rbrace$, or is obtained from $Y_j$ by successive $C_p$-equivariant blow-ups, where $p=2$ for $j\in \lbrace 2,4 \rbrace$ and $p=3$ for $j=3$. From \cite[Corollary 1.6]{Ilten11}, $H^1(Y_j,TY_j)$ injects in $H^1(X,TX)$. Hence from the computations of Example \ref{example:deformation spaces}, we see that $X$ is not rigid, and the result follows.
\end{proof}

\begin{proposition}
 \label{prop:Wtoric}
 The GIT quotient $W$ inherits the structure of a normal toric affine variety.
\end{proposition}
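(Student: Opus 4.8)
The plan is to exhibit $W = V /\!/ T$ as an affine toric variety by producing an explicit torus acting on it with a dense orbit, using the weight space decomposition of $V$ together with Ilten's formula (\ref{eq:iltenformula}). First I would record that $V = H^1(X,TX)$ carries the linear $T$-action coming from the toric structure, and that by complete reducibility it splits as $V = \bigoplus_{m\in M} V_m$, with each $V_m$ a weight-$m$ representation; let $S = \{m_1,\dots,m_s\}\subset M$ be the (finite) set of weights occurring, and fix a basis of $V$ adapted to this decomposition, so that $V \simeq \C^n$ with coordinates $x_1,\dots,x_n$, each $x_k$ of some weight $m(k)\in S$. The ring of regular functions is $R = \C[x_1,\dots,x_n]$, graded by $M$ via $\deg(x_k) = m(k)$, and $R^T = R_0$ is the degree-$0$ part, i.e. the semigroup algebra on the monomials $x^a = \prod x_k^{a_k}$ with $\sum_k a_k\, m(k) = 0$ and $a\in \N^n$. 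Thus $R^T = \C[\mathsf{S}]$ where $\mathsf{S} = \{ a\in \N^n : \sum_k a_k m(k) = 0\}$ is a finitely generated submonoid of $\Z^n$, so $W = \Spec(\C[\mathsf{S}])$ is an affine toric variety in the sense of \cite[Chapter 1]{CLS}, with big torus $\Spec(\C[\Z\mathsf{S}])$ and acting torus of dimension $n - \dim(\Span_\R(S))$. Normality I would get from the standard fact that $R^T$ is the ring of invariants of a torus acting linearly on a polynomial ring, hence integrally closed (the torus is reductive and $R$ is normal, so $R^T = R\cap \mathrm{Frac}(R^T)$ is normal), or equivalently by saturating $\mathsf{S}$ and checking it is already saturated because it is cut out from $\N^n$ by a linear condition over $\Z$.

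Concretely: the map $a\mapsto \sum_k a_k m(k)$ is a homomorphism $\Z^n \to M \simeq \Z^2$, and $\mathsf{S}$ is the intersection of its kernel $L := \ker(\Z^n \to \Z^2)$ with the positive orthant $\N^n$. Since $L$ is a saturated sublattice of $\Z^n$ (kernel of a map to a torsion-free group), $\mathsf{S} = L\cap \N^n$ is a saturated affine semigroup, and Gordan's lemma gives finite generation; hence $W$ is a normal affine toric variety of dimension $n-2$ (or $n-1$ if all weights in $S$ are proportional, or $n$ if $V=0$, the rigid case where $W$ is a point — but in the cases of interest from Lemma \ref{lem:minimal models} the weights span $M_\R$). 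Explicitly, the cone of $W$ is $\sigma^\vee = \mathrm{Cone}(\mathsf{S})\subset L_\R$, dually the fan is generated by the images of the coordinate functionals under the projection $\Z^n \twoheadrightarrow \Z^n/\Z^2 = L^*$ cut out appropriately; I would set up this combinatorial data now since Sections \ref{sec:singularity} onwards need it to check the Gorenstein and terminal conditions.

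The main obstacle — which is really bookkeeping rather than a conceptual difficulty — is to be careful that the $T$-action on $V$ used here is the one genuinely induced by the toric structure on $H^1(X,TX)$, i.e. that the weights $m(k)$ are exactly those read off from (\ref{eq:isom ilten}) and (\ref{eq:iltenformula}), and in particular that there really is a full $2$-torus $T$ (not a proper subtorus) acting, so that $\dim W = n-2$ as expected; this is guaranteed because the weights appearing in Example \ref{example:deformation spaces} span $M_\R$ for $Y_2, Y_3, Y_4$ and this is preserved under the equivariant blow-ups by the injectivity statement \cite[Corollary 1.6]{Ilten11}. A secondary point to handle cleanly is the normality claim when the torus acting on $W$ has smaller dimension than $W$: here one just invokes that invariant rings of linearly reductive groups acting on normal (indeed polynomial, hence regular) rings are normal, so no subtlety arises. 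With the semigroup $\mathsf{S} = L\cap\N^n$ identified, the proposition follows.
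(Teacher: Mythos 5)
Your proof is correct, but it reaches the conclusion by the dual route to the paper's. You stay on the coordinate-ring side: the invariants of the diagonal $T$-action are spanned by the weight-zero monomials, so $R^T=\C[\mathsf{S}]$ with $\mathsf{S}=L\cap\N^n$, $L=\ker(\Z^n\to M)$, and finite generation plus saturation are immediate (Gordan's lemma; saturation is clear since $L$ is a kernel and $\N^n$ is saturated), so $W=\Spec(\C[\mathsf{S}])$ is a normal affine toric variety with torus $\Spec(\C[\Z\mathsf{S}])$ --- no strict-convexity check is needed for the statement itself. The paper instead works on the one-parameter-subgroup side: it sets up the exact sequence $0\to N\stackrel{B}{\to}\tilde N\stackrel{A}{\to} N'\to 0$ (checking injectivity of $B$ and saturation of $B(N)$ in $\tilde N$ via the minimal models $Y_j$), defines $\sigma=A(\tilde\sigma)\subset N'_\R$, proves strict convexity through $\Im(B)\cap\tilde\sigma=\{0\}$ by explicit computation for $Y_2,Y_3,Y_4$, and only then identifies $\C[\sigma^\vee\cap M']$ with $R^T$. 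What that longer route buys is exactly the data needed in Proposition \ref{prop:Wterminal}: the lattice $N'$, the ray generators $A(\tilde e_i)$ of $\sigma$, and the dual exact sequence; with your proof you would have to redo that set-up later (your aside ``$\Z^n\twoheadrightarrow\Z^n/\Z^2=L^*$'' already implicitly uses the injectivity and saturation of $B$ that the paper verifies). One caveat: your incidental dimension count is not justified as written. That $\dim W=n-2$ does not follow from the weights spanning $M_\R$ (the weights $e_1^*,e_2^*,e_1^*+e_2^*$ span $M_\R$ yet give $\mathsf{S}=\{0\}$); it requires $0$ to lie in the interior of the cone generated by the weights, which is precisely the paper's strict-convexity step $\Im(B)\cap\tilde\sigma=\{0\}$, or alternatively follows from foldability via $\sum_i m_i=0$, which places $(1,\dots,1)$ in $\mathsf{S}$. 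Since the proposition only asserts that $W$ is a normal affine toric variety, this gap does not affect your proof of the stated result, but it should be closed before the dimension or the cone $\sigma$ is used downstream.
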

Although the proof is straightforward, we should warn the reader that our setting is slightly different from the standard quotient construction of toric varieties (see e.g. \cite[Chapter 5]{CLS}), as the weight spaces $V_m$ may have dimension greater than $1$, and the quotient map $\tilde \sigma \to \sigma$ (see the proof below) may send several rays to the same one. This detailed proof will also settle the necessary notations for the following section.
\begin{proof}
If $V=0$, then there is nothing to prove. We then assume that $X$ is not rigid. From Lemma \ref{lem:minimal models}, $X$ is obtained from $Y_j$ by successive $C_p$-equivariant blow-ups, where $j\in\lbrace 2, 3, 4\rbrace$, for suitable $p\in\lbrace 2,3 \rbrace$.

 Denote by $d=\dim(V)$ and by $\tilde N\simeq \Z^d$ the lattice of one parameter subgroups of $\tilde T \simeq(\C^*)^d$ acting on $V\simeq \C^d$ by multiplication on each coordinate :
 $$
 \forall (t,x)\in \tilde T \times V,\: t\cdot x =(t_i x_i)_{1\leq i \leq d}.
 $$
 The $T$-module structure of $V$ is then equivalent to the injection $T \to \tilde T$, or the lattice monomorphism
 $$
 B : N \to \tilde N
 $$
 defined by 
 $$B(u)=(\langle m_i , u \rangle)_{1\leq i \leq d}$$
 where the $m_i$'s run through all the weights in the weight space decomposition of $V$ (with multiplicities when $\dim(V_m)\geq 2$). 
 The fact that $B$ is injective comes again from the injection $V_j \subset V$ (cf \cite[Corollary 1.6]{Ilten11}) and the explicit computation of $V_j$ in Example \ref{example:deformation spaces}. We then consider the quotient
 $$
 N':=\tilde N / N,
 $$
 where we identify $N$ with $B(N)$ by abuse of notation. We claim that $N'$ is again a lattice, that is $N$ is saturated in $\tilde N$. If $X=Y_j$, for $j\in \lbrace 2,3,4\rbrace$, then this can be checked directly using the description of $V_j$ in Example \ref{example:deformation spaces}. If $X$ is a blow-up of $Y_j$, then $V_j \subset V$, so that $B$ can be written $B=(B_j, B_+)$, where
 $$B_j : N \to \tilde N_j$$
 is the injection corresponding to the weight space decomposition of $V_j$, with 
 $$\tilde N_j\simeq \Z^{d_j},$$
 for $d_j=\dim(V_j)$. Then, the fact that $B_j(N)$ is saturated in $\tilde N_j\subset \tilde N$ implies that $B(N)$ is saturated in $\tilde N$. Thus, we have a short exact sequence of lattices 
 \begin{equation}
  \label{eq:short sequence lattice}
  0\longrightarrow N \stackrel{B}{\longrightarrow}\tilde N \stackrel{A}{\longrightarrow} N' \longrightarrow 0
 \end{equation}
where 
$$A : \tilde N \to \tilde N /N$$
denote the quotient map. 

Introduce $(\tilde e_i)_{1\leq i\leq d}$ the basis of $V$ dual to the coordinates $\chi^{m_i}$ of the weight spaces $V_{m_i}$. It corresponds to a $\Z$-basis of $\tilde N$, still denoted $(\tilde e_i)_{1\leq i \leq d}$.  The cone 
$$\tilde \sigma =\sum_{i=1}^d \R_+\cdot \tilde e_i\subset \tilde N_\R$$
satisfies 
$$
\Spec(\C[\tilde \sigma^\vee\cap\tilde M])=V,
$$
where
$$\tilde M=\Hom_\Z(\tilde N, \Z).
$$
Consider the cone of $N'_\R$ :
$$
\sigma = A(\tilde \sigma)= \sum_{i=1}^d \R_+\cdot A(\tilde e_i)
$$
(we will still denote by $A$ and $B$ their $\R$-linear extensions). Our goal is then to show that $W$ is isomorphic to the affine toric variety defined by $\sigma$. We first need to prove that $\sigma$ has the required properties to define such a variety. The cone $\sigma$ clearly is rational, convex and polyhedral. We now prove that $\sigma$ is strictly convex, that is 
$$-\sigma \cap \sigma =\lbrace 0 \rbrace.$$ So let $v\in-\sigma\cap \sigma$. Then there is $(u_+, u_-)\in(\tilde \sigma)^2$ such that $A (u_\pm)=\pm v$ . We deduce that 
$$u_+ + u_-\in \ker (A)\cap \tilde \sigma = \Im (B)\cap \tilde \sigma .$$
We will then show that 
\begin{equation}
 \label{eq:ImB cap sigma}
 \Im (B) \cap \tilde \sigma= \lbrace 0 \rbrace,
\end{equation}
which implies $u_+=u_-=0$ by strict convexity of $\tilde \sigma$. Notice that it is enough to show (\ref{eq:ImB cap sigma}) for $X=Y_j$. Indeed, using the decomposition $B=(B_j,B_+)$ as before, if 
$$u=B(x)\in\tilde \sigma,$$
then $$u_j=B_j(x)\in \tilde \sigma_j,$$ where $\tilde \sigma_j \subset (\tilde N_j)_\R$ is the cone corresponding to $V_j$. If 
\begin{equation}
 \label{eq:Im Bj}
\Im(B_j)\cap \tilde \sigma_j =\lbrace 0 \rbrace,
\end{equation} 
by injectivity of $B_j$, we deduce $x=0$ and then $u=0$. Remains to prove (\ref{eq:Im Bj}), which follows again from the explicit descrition of the weights of the $T$ action on $V_j$. For example, if $j=2$, and if 
$$B_2(x_1,x_2)=(x_2,-x_2,-x_1+x_2,x_1-x_2) \in \tilde \sigma_2,$$
then by definition of $\tilde \sigma_2$ we deduce 
$$
\left\{ \begin{array}{ccc}
         x_2 & \geq & 0 \\
         - x_2 & \geq & 0 \\
         -x_1 +x_2 & \geq & 0 \\
         x_1 - x_2 & \geq & 0
        \end{array}
\right.
$$
and thus $x_1=x_2=0$. The cases $j\in\lbrace 3, 4 \rbrace$ are similar. We just proved that $\sigma$ is a strongly convex rational polyhedral cone, and thus defines an affine toric variety.

We now claim that 
$$
W\simeq\Spec(\C[\sigma^\vee\cap M'])
$$
with 
$$M'=\Hom_\Z(N', \Z).
$$
It is equivalent to show
$$
\C[\sigma^\vee\cap M']\simeq \C[\chi^{m_1},\ldots,\chi^{m_d}]^T.
$$
The latter equality follows easily from the definitions. Indeed, considering the dual sequence to (\ref{eq:short sequence lattice}),
\begin{equation}
  \label{eq:short sequence lattice dual}
  0\longrightarrow M' \stackrel{A^*}{\longrightarrow}\tilde M \stackrel{B^*}{\longrightarrow} M \longrightarrow 0
 \end{equation}
for $m'\in M'$, if $\tilde m =A^* (m')$, then
$$
\begin{array}{ccc}
 \chi^{m'} \in \sigma^\vee & \Longleftrightarrow & \forall i\in [\![ 1 , d ]\!],\: \langle m', A(\tilde e_i) \rangle \geq 0\\
 & & \\
  & \Longleftrightarrow &  \forall i\in [\![ 1 , d ]\!],\: \langle A^*( m'),  \tilde e_i \rangle \geq 0\\
  & & \\
  & \Longleftrightarrow & \left\{ \begin{array}{c}
  B^*(\tilde m) =0 \\
  \forall i\in [\![ 1 , d ]\!],\: \langle \tilde m,  \tilde e_i \rangle \geq 0\end{array} \right. \\
  & & \\
   & \Longleftrightarrow &  \chi^{\tilde m}\in   \C[\chi^{m_1},\ldots,\chi^{m_d}]^T,
\end{array}
$$
where in the last equivalence we used the facts that from the definition of the basis $(\tilde e_i)_{1\leq i\leq d}$ :
$$
\chi^{\tilde m}\in   \C[\chi^{m_1},\ldots,\chi^{m_d}] \Longleftrightarrow \forall i\in [\![ 1 , d ]\!],\: \langle \tilde m,  \tilde e_i \rangle \geq 0,
$$
and  that the $T$-module structure on $\C[\chi^{m_1},\ldots,\chi^{m_d}]$ reads, for any one-parameter subgroup $\lambda_u:\C^*\to T$ generated by some $u\in N$ :
$$
\lambda_u(t)\cdot \chi^{\tilde m}=t^{\langle \tilde m, B(u)\rangle}\;\chi^{\tilde m}.
$$
This proves the claim, and the result follows.
\end{proof}
 
 \subsection{Singularities}
 \label{sec:singularity}
 We keep the notations from the previous section (see in particular the proof of Proposition \ref{prop:Wtoric}).
 Our goal here is to conclude the proof of Theorem \ref{theo:intro}, by showing that $W$ is Gorenstein and terminal. Let us first recall the definitions of those notions, and their combinatorial characterisation in the toric case (we refer the reader to \cite[Introduction and Chapter 5]{KM} for a more general treatment of those notions).
 \begin{definition}
  Let $Z$ be a normal toric variety\footnote{Being normal is not required in the most general definition of Gorenstein singularities. However, we will only deal with normal toric varieties, that are rational \cite[Theorem 11.4.2]{CLS}, hence Cohen--Macaulay, which is usually required to define Gorenstein singularities.}. Then $Z$ is {\it Gorenstein} if the canonical divisor $K_Z$ is Cartier ({\it $\Q$-Gorenstein} if $K_Z$ is $\Q$-Cartier). In that case, $Z$ has {\it terminal singularities} if there is a resolution of singularities $\pi : \tilde Z \to Z$ such that if we set
  $$ 
  K_{\tilde Z}=\pi^*K_Z + \sum_i a_i E_i,
  $$
  where the $E_i$'s are distinct irreducible divisors, then for all $i$, we have $a_i >0$.
 \end{definition}
One can check that the above definition doesn't depend on the choice of resolution. Terminal singularities play an important role in the minimal model program, being the singularities of minimal models. Their logarithmic version turned out very useful as well. Recall that a {\it log pair} is a normal variety $Z$ together with an effective $\Q$-divisor $D$ with coefficients in $[0,1]\cap \Q$. A {\it log resolution} for a log pair $(Z,D)$ is a resolution of singularities $\pi: \tilde Z \to Z$ such that the exceptional locus $\mathrm{Exc}(\pi)$ of $\pi$ is a divisor and such that  $\pi^{-1}(\mathrm{Supp}(D))\cup\mathrm{Exc}(\pi)$
is a simple normal crossing divisor.
\begin{definition}
 Let $(Z,D)$ be a log pair such that $K_Z+D$ is $\Q$-Cartier (the pair is then called {\it $\Q$-Gorenstein}). In that case, $(Z,D)$ has {\it klt singularities} if there is a log-resolution  $\pi : \tilde Z \to Z$  such that if we set
  $$ 
  K_{\tilde Z}=\pi^*(K_Z+D) + \sum_i a_i E_i,
  $$
  where the $E_i$'s are distinct irreducible divisors, then for all $i$, we have $a_i >-1$. Following \cite{BraunGLM}, we will say that a normal variety $Z$ is of {\it klt type} if there exists an effective $\Q$-divisor $D$ such that $(Z,D)$ is a $\Q$-Gorenstein log pair with klt singularities.
\end{definition}
At that stage, from \cite[Theorem 1]{BraunGLM}, we know that $W$ is of klt type. Also, by \cite[Corollary 11.4.25 ]{CLS}, if $W$ is Gorenstein, then it has log terminal singularities, meaning that the pair $(W,0)$ is klt. We will give a direct proof that it is actually Gorenstein with terminal singularities, using the following characterisation from \cite[Proposition 11.4.12]{CLS} (where we use the notations $\sigma(1)$ to denote the set of rays in a cone $\sigma$ and $u_\rho$ for the primitive generator of a ray $\rho$ in $\sigma(1)$).
\begin{proposition}
 \label{prop:Gorenstein and terminal}
 Consider $W=\Spec(\C[\sigma^\vee\cap M'])$ the affine toric variety associated to the rational strictly convex polyhedral cone $\sigma\subset N'_\R$ (with lattice $N'$). Then 
 \begin{itemize}
  \item $W$ is Gorenstein if and only if there exists $m\in M'$ such that for all $\rho\in\sigma(1)$, $\langle m, u_\rho \rangle =1$.
  \item In that case, $W$ has terminal singularities if and only if the only lattice points in 
  $$
  \Pi_\sigma:= \left\{ \sum_{\rho\in \sigma(1)} \lambda_\rho u_\rho\:\vert\: \sum_{\rho\in\sigma(1)} \lambda_\rho \leq 1,\: 0\leq \lambda_\rho \leq 1 \right\}\subset N'_\R
  $$
  are given by its vertices.
 \end{itemize}
\end{proposition}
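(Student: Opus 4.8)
The plan is to deduce the statement from two standard facts of toric geometry: the description $K_W=-\sum_{\rho\in\sigma(1)}D_\rho$ of the canonical divisor of an affine toric variety, and the dictionary between toric resolutions of $W$ and smooth refinements of the cone $\sigma$. After splitting off a torus factor I may assume that $\sigma$ is full-dimensional in $N'_\R$ (this affects neither being Gorenstein or terminal nor the polytope $\Pi_\sigma$), so that the primitive ray generators $u_\rho$, $\rho\in\sigma(1)$, span $N'_\R$. Recalling that a torus-invariant divisor $\sum_\rho a_\rho D_\rho$ on $W$ is Cartier if and only if there is $m\in M'$ with $\langle m,u_\rho\rangle=-a_\rho$ for all $\rho\in\sigma(1)$ (see e.g. \cite[Chapter 4]{CLS}), and applying this with $a_\rho=-1$, one gets the first bullet: $W$ is Gorenstein precisely when some $m\in M'$ satisfies $\langle m,u_\rho\rangle=1$ for every $\rho\in\sigma(1)$; such an $m$ is then unique since the $u_\rho$ span. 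Fix it.

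The second step computes discrepancies along a toric resolution. Since $K_W$ is Cartier and $\mathrm{div}(\chi^{-m})=\sum_{\rho\in\sigma(1)}\langle -m,u_\rho\rangle D_\rho=-\sum_\rho D_\rho=K_W$, for any resolution $\pi:X_{\Sigma'}\to W$ attached to a smooth refinement $\Sigma'$ of $\sigma$ one has $\pi^*K_W=\mathrm{div}(\chi^{-m})$ computed on $X_{\Sigma'}$, whence
$$
K_{X_{\Sigma'}}-\pi^*K_W=-\sum_{\rho'\in\Sigma'(1)}D_{\rho'}+\sum_{\rho'\in\Sigma'(1)}\langle m,u_{\rho'}\rangle D_{\rho'}=\sum_{\rho'\in\Sigma'(1)}\bigl(\langle m,u_{\rho'}\rangle-1\bigr)D_{\rho'}.
$$
Rays $\rho'\in\sigma(1)$ occur with coefficient $0$ and are non-exceptional, while each new ray $\rho'$ gives an exceptional prime divisor with discrepancy $\langle m,u_{\rho'}\rangle-1$. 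Because any primitive $u\in\sigma\cap N'$ can be realized as a ray of some smooth refinement, and because the terminal condition does not depend on the chosen resolution, this identifies terminality of $W$ with the requirement that $\langle m,u\rangle>1$ for every primitive lattice point $u\in\sigma$ that does not generate a ray of $\sigma$.

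The third step turns this into the condition on $\Pi_\sigma$. For $v=\sum_\rho\lambda_\rho u_\rho\in\sigma$ one has $\langle m,v\rangle=\sum_\rho\lambda_\rho$, so $\langle m,\cdot\rangle$ is nonnegative on $\sigma$, vanishes only at $0$, and is integer-valued on $\sigma\cap N'$; hence no lattice point of $\sigma$ sits strictly between the levels $0$ and $1$, and every lattice point of $\sigma$ at level $1$ is automatically primitive. Also the constraints $0\le\lambda_\rho\le 1$ in the definition of $\Pi_\sigma$ are implied by $\lambda_\rho\ge 0$ and $\sum_\rho\lambda_\rho\le 1$, so $\Pi_\sigma=\mathrm{conv}\bigl(\{0\}\cup\{u_\rho:\rho\in\sigma(1)\}\bigr)$; its vertices are exactly $0$ (the face on which $\langle m,\cdot\rangle=0$) and the $u_\rho$ (each a vertex of the facet on which $\langle m,\cdot\rangle=1$, since $u_\rho$ spans an extreme ray of $\sigma$), and $\Pi_\sigma$ lies in the slab $0\le\langle m,\cdot\rangle\le 1$. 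Therefore the lattice points of $\Pi_\sigma$ are $0$ together with the level-$1$ lattice points of $\sigma$, and comparing with the second step, $W$ is terminal if and only if the only level-$1$ lattice points of $\sigma$ are the $u_\rho$, i.e. if and only if $\Pi_\sigma$ contains no lattice point besides its vertices.

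The point requiring the most care is the identification in the second step, i.e. the two halves of the assertion that terminality is a condition on lattice points: that every primitive $u\in\sigma\cap N'$ off the rays appears as an exceptional divisor of some toric resolution with discrepancy exactly $\langle m,u\rangle-1$; and, conversely, that positivity of all these numbers already forces $W$ to be terminal — for which one uses that smooth varieties are terminal with discrepancies $\ge 1$ and that discrepancies only grow under further (toric) blow-ups, so positivity on one resolution propagates to every exceptional divisor over $W$. Both are standard consequences of resolution of toric singularities by subdivision (\cite[Chapter 11]{CLS}, \cite[Chapter 5]{KM}); granting them, the remainder is the elementary bookkeeping with the linear form $\langle m,\cdot\rangle$ done above.
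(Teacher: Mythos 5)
Your proof is correct, but note that the paper does not prove this statement at all: it is quoted verbatim from \cite[Proposition 11.4.12]{CLS}, so there is no internal argument to compare against. What you have written is essentially the standard derivation underlying that citation: $K_W=-\sum_\rho D_\rho$, the Cartier criterion for torus-invariant divisors giving the Gorenstein bullet, then discrepancies $\langle m,u_{\rho'}\rangle-1$ along toric resolutions coming from smooth refinements, and finally the identification $\Pi_\sigma=\mathrm{conv}(\{0\}\cup\{u_\rho\})=\sigma\cap\{\langle m,\cdot\rangle\le 1\}$, whose lattice points are $0$ and the level-one points of $\sigma$. All steps check out, including the harmless reduction to full-dimensional $\sigma$ and the observation that level-one lattice points are automatically primitive. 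The only place where you lean on an unproved input is the independence of terminality from the chosen resolution (needed for the "only if" half of the second bullet); this is standard and the paper itself asserts it after its definition, so citing \cite{KM} is acceptable. If you wanted to avoid it entirely, you could argue directly: if $u\in\sigma\cap N'$ is a non-ray lattice point with $\langle m,u\rangle=1$, then in \emph{any} smooth refinement $\Sigma'$ the cone containing $u$ has generators of level $\ge 1$, forcing $u$ itself to be a ray of $\Sigma'$, hence every toric resolution carries an exceptional divisor of discrepancy $0$, so no resolution can satisfy the positivity in the definition. With that remark your argument is self-contained up to toric resolution of singularities and the order-reversing dictionary you already cite.
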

From the discussion in Section \ref{sec:toric quotient}, the following proposition concludes the proof of Theorem \ref{theo:intro}.
\begin{proposition}
 \label{prop:Wterminal}
 The affine toric variety $W$ is Gorenstein and has terminal singularities.
\end{proposition}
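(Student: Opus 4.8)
The plan is to verify, for the cone $\sigma\subset N'_\R$ constructed in the proof of Proposition \ref{prop:Wtoric}, the combinatorial characterisation of Proposition \ref{prop:Gorenstein and terminal}. If $V=0$ (i.e. $X$ is one of the rigid surfaces $\P^2,\ \P^1\times\P^1,\ \mathrm{Bl}_{p_1,p_2,p_3}(\P^2)$ of Lemma \ref{lem:minimal models}), then $W$ is a point and there is nothing to prove, so assume $V\neq 0$. The argument then splits into the Gorenstein statement, which I would get from a symmetry consideration, and the terminality statement, which I would obtain by induction over the $C_p$-equivariant blow-ups of Lemma \ref{lem:minimal models}.

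\emph{Gorenstein.} Since $X$ is foldable there is $p\in\{2,3,4,6\}$ and an injection $C_p\hookrightarrow G=\Aut(N,\Sigma)$. The induced $C_p$-action on $V=H^1(X,TX)$ is compatible with the $T$-action through the semidirect product structure of $\Aut(X)$, so $C_p$ permutes the weight spaces $V_m$; hence the multiset of weights $\{m_1,\dots,m_d\}$ (listed with multiplicity) is invariant under the action of $C_p$ on $M$ contragredient to its action on $N$. As this action on $N_\R$, hence on $M_\R$, is orthogonal with no nonzero fixed vector for $p\geq 2$, we get $\sum_i m_i=0$. Dually, writing $B^*:\tilde M\to M$ for the transpose of $B:N\to\tilde N$ (so $B^*(\tilde e_i^*)=m_i$), the vector $\tilde m:=\sum_i\tilde e_i^*$ lies in $\ker B^*=\Im A^*$; let $m'\in M'$ be the unique element with $A^*(m')=\tilde m$. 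For $\rho\in\sigma(1)$, choose $i$ with $\rho=\R_+\cdot A(\tilde e_i)$; then $A(\tilde e_i)\neq 0$ and $A(\tilde e_i)=c\,u_\rho$ for some integer $c\geq 1$, while $c\,\langle m',u_\rho\rangle=\langle A^*(m'),\tilde e_i\rangle=\langle\tilde m,\tilde e_i\rangle=1$, forcing $c=1$ and $\langle m',u_\rho\rangle=1$. By the first point of Proposition \ref{prop:Gorenstein and terminal}, $W$ is Gorenstein; moreover each ray generator $u_\rho$ is of the form $A(\tilde e_i)$, so $P:=\sigma\cap\{\langle m',\cdot\rangle=1\}$ equals the lattice polytope $A(\Delta)$, where $\Delta=\mathrm{conv}(\tilde e_1,\dots,\tilde e_d)$ is the standard simplex, and $\Pi_\sigma=\mathrm{conv}(\{0\}\cup P)$.

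\emph{Terminal.} By Proposition \ref{prop:Gorenstein and terminal} it remains to show that the only lattice points of $P$ are its vertices. Since $\sum_i m_i=0$, one has $\sum_i (B(u))_i=0$ for all $u\in N_\R$, so $P\cap N'=A\big(\{\,\hat w\in\tilde N:\ \sum_i\hat w_i=1,\ \hat w+B(u)\in\R_{\geq 0}^d\text{ for some }u\in N_\R\,\}\big)$; thus terminality is equivalent to the assertion that no $\hat w\in\Z^d$ with $\sum_i\hat w_i=1$ and $\hat w\notin\{\tilde e_1,\dots,\tilde e_d\}$ satisfies $(\hat w+\Im B)\cap\R_{\geq 0}^d\neq\emptyset$. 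I would first check this directly for the three ``minimal'' foldable surfaces $Y_2,Y_3,Y_4$ (Lemma \ref{lem:minimal models}, Example \ref{example:deformation spaces}): for $Y_2$ and $Y_4$ the relations defining $N'$ collapse the four generators $A(\tilde e_i)$ to a $\Z$-basis of $N'\cong\Z^2$, so $W\cong\C^2$ is smooth, while for $Y_3$ the polytope $P$ is an explicit $3$-dimensional octahedron which one checks to contain no lattice point besides its six vertices. Then I would induct on the number of $C_p$-equivariant blow-ups: if $X'\to X$ is such a blow-up (with $X,X'$ foldable cscK) and $W_X$ is Gorenstein terminal, one computes the new weights of $V'=H^1(X',TX')$ via Ilten's formula \eqref{eq:iltenformula}—adding a $C_p$-orbit of rays $g\cdot(u_i+u_{i+1})$ to $\Sigma$ only alters the contributions of the rays adjacent to the blown-up fixed points—and uses $\tilde N'=\tilde N\oplus\tilde N''$, $B'=(B,B'')$ to compare $\sigma'$ with $\sigma$.

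The step I expect to be the main obstacle is exactly this inductive convex-geometry argument: one must show that passing from $\sigma$ to $\sigma'$ (equivalently, from $P$ to $P'$) creates no new non-vertex lattice point, i.e. that the blow-up introduces no crepant divisor on the local moduli space. Concretely, in the reformulation above one has to rule out integral vectors $\hat w'\in\Z^{d'}$ of coordinate-sum $1$, other than the new standard basis vectors, for which $\hat w'+\Im B'$ meets the positive orthant; the point should be that the new coordinates of $B''$ are rigid enough—because the new weights produced by \eqref{eq:iltenformula} at a blown-up corner are forced to be of $u_i+u_{i+1}$-type with the sign pattern dictated by Ilten's conditions—that any such $\hat w'$ would project to an admissible $\hat w$ for $X$, contradicting the inductive hypothesis. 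Making this precise, keeping track of the $C_p$-action, is where the detailed combinatorics of smooth toric surfaces enters.
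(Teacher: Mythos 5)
Your Gorenstein argument is essentially the paper's: the fixed-point-free $C_p$-action permutes the weights, so $\sum_i m_i=0$, i.e.\ $(1,\dots,1)\in\ker B^*=\Im A^*$, and the resulting $m'\in M'$ evaluates to $1$ on every $A(\tilde e_i)$. Your way of extracting primitivity of the ray generators from $c\,\langle m',u_\rho\rangle=1$ is in fact a little slicker than the paper, which proves primitivity separately by a case analysis on the coordinates of $B_j(x)$. This half of your proposal is complete and correct.

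The terminality half, however, has a genuine gap, and it is exactly the step you flag yourself: the inductive passage through the $C_p$-equivariant blow-ups is never carried out, and as sketched it would not close. Your idea is that a non-vertex lattice point $\hat w'$ for $X'$ should ``project to an admissible $\hat w$ for $X$, contradicting the inductive hypothesis'', but the projection $N'\to N_0'$ kills the new coordinates and can identify several generators, so a non-vertex lattice point of $\Pi_{\sigma'}$ may well map to a vertex of $\Pi_{\sigma_0}$ or to the apex (e.g.\ $\lambda_1=\lambda_2=\tfrac12$ with $A_0(\tilde e_1)=A_0(\tilde e_2)$ gives a vertex downstairs); the inductive hypothesis ``$W_{X_0}$ terminal'' is too coarse to rule this out, and controlling the new weights produced by \eqref{eq:iltenformula} at each blow-up is delicate (old weights can even gain multiplicity, as in the non-Gorenstein example of Section \ref{sec:examples}). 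The paper avoids the induction entirely: by Lemma \ref{lem:minimal models} and \cite[Corollary 1.6]{Ilten11} one has a $T$-equivariant inclusion $V_j\subset V$, hence a block decomposition $B=(B_j,B_+)$, and for a lattice point $u'=\sum_i\lambda_iA(\tilde e_i)\in\Pi_\sigma\cap N'$ one writes $\tilde u-\sum_i\lambda_i\tilde e_i=B(x)$ and argues in one shot: if $B_j(x)=0$ then $x=0$ and the $\lambda_i$ are integral; if $\lambda_i=0$ for all $i\le d_j$ then $B_j(x)\in\tilde N_j$ and \emph{saturation} of $B_j(N)$ in $\tilde N_j$ forces $x\in N$, again giving integrality; otherwise the first $d_j$ equations are exactly the $Y_j$-system, which is settled by the explicit computations you already propose for $Y_2,Y_3,Y_4$. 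The missing idea in your write-up is precisely this use of injectivity plus saturation of the sub-block $B_j$ to force integrality of the $\lambda_i$, anchored once and for all at the minimal model rather than step by step. (Two smaller points: your reformulation should exclude $\hat w$ with $A(\hat w)\in\{A(\tilde e_1),\dots,A(\tilde e_d)\}$ rather than $\hat w\notin\{\tilde e_1,\dots,\tilde e_d\}$, since $\hat w$ may differ from a basis vector by an integral element of $\Im B$; and the $C_p$-action plays no role in the terminality argument beyond the classification, so there is nothing to ``keep track of'' there.)
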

\begin{proof}
We will exclude the rigid cases, and as in the proof of Proposition \ref{prop:Wtoric}, assume that $X$ is obtained from $Y_j$ by successive $C_p$-equivariant blow-ups, where $j\in\lbrace 2, 3, 4\rbrace$, for suitable $p\in\lbrace 2,3 \rbrace$.

 We use the characterisation in Proposition \ref{prop:Gorenstein and terminal}, and first need to describe the rays of $\sigma$ and their primitive generators. Note that by construction, the rays of $\sigma$ belong to the set $\lbrace \R_+\cdot A(\tilde e_i),\: 1\leq i\leq d \rbrace$. Let $\rho_i=\R_+\cdot A(\tilde e_i)$ be such a ray. We claim that $A(\tilde e_i)$ is primitive in $N'$. The argument is similar to the one used to prove strict convexity in the proof of Proposition \ref{prop:Wtoric}. Suppose by contradiction that there is $a\in \N$, $a\geq 2$, and $\tilde e \in \tilde \sigma\cap \tilde N$ such that $A(\tilde e_i)=a A(\tilde e)$. Notice that there is $x\in N$ such that 
 \begin{equation}
  \label{eq:Bjx}
  B(x)=(B_j(x),B_+(x))=\tilde e_i - a \tilde e.
 \end{equation}
 By injectivity of $B_j$, if $B_j(x)=0$, then $x=0$ hence $\tilde e_i = a\tilde e$. This is absurd as $\tilde e_i$ is primitive. So we may assume $B_j(x)\neq 0$. Similarly, using Equation (\ref{eq:Im Bj}), we may assume as well that $B_j(x)\notin - \tilde \sigma_j$ and $B_j(x)\notin \tilde \sigma_j$. Hence $B_j(x)$ must have at least one positive and one negative coordinate in the basis $(\tilde e_k)_{1\leq k \leq d_j}$. As $\tilde e \in \tilde \sigma\cap \tilde N$, its coordinates in the basis $(\tilde e_k)_{1\leq k\leq d}$ are non-negative integers, and as $a\geq 2$,  we can write 
 $$
 a\tilde e = \sum_{k=1}^d a_k \tilde e_k
 $$
 with $a_k=0$ or $a_k\geq 2$. Then, from Equation (\ref{eq:Bjx}), $B_j(x)$ has exactly one coordinate equal to $1$, while its other non-zero coordinates are all less or equal to $-2$. A case by case analysis using the description of $V_j$ in Equation (\ref{eq:example Vs}) (cf Example \ref{example:deformation spaces}) shows that this is impossible. Indeed, for $V_2$, the map $B_2$ is
 $$
 B_2(x_1,x_2)=(x_2,-x_2, -x_1+x_2, x_1-x_2),
 $$
 with $x=(x_1,x_2)\in\R^2=N_\R$,
 so if one coordinate of $B_2(x)$ is $1$, there is another coordinate that equals $-1$. A similar argument leads to the same conclusion for $B_4$. For $B_3$, we can simply use the fact that the weight spaces are $2$ dimensional, and we have
 $$
 B_3(x)=(-x_2,-x_2,x_1,x_1,-x_1+x_2,-x_1+x_2),
 $$
 so the value $1$ would appear at least twice in the coordinates of $B_3(x)$, if it ever does.
 
 We will then prove that there is  $m' \in M'$ such that $$ \forall i\in [\![1, d ]\!]\: ,\:\langle m', A(\tilde e_i) \rangle =1 ,$$
 which implies that $W$ is Gorenstein. So let $m'\in M'$. Set 
 $$\tilde m = A^*(m')\in \ker (B^*).$$
 Then,
 $$
 \begin{array}{ccc}
   \forall i\in [\![1, d ]\!]\:,\: \langle m', A(\tilde e_i) \rangle =1 & \Longleftrightarrow & \forall i\in [\![1, d ]\!]\:,\: \langle \tilde m, \tilde e_i \rangle =1 \\
    & & \\
    & \Longleftrightarrow & \tilde m = (1, \ldots  ,1)
 \end{array}
$$
where we used the basis $(\tilde e_i)$ to produce the coordinates of $\tilde m$.
Hence, it is equivalent to show that 
$$
(1, \ldots, 1)\in \ker (B^*),
$$
which by definition of $B$ is equivalent to 
\begin{equation*}
 \label{eq:weights vanishing condition}
\sum_{i=1}^d  m_i =0\in M
\end{equation*}
where $m_i$'s are the weights describing the $T$-action on $V$. This is where we use that $X$ is foldable. Recall that $G=\Aut(N,\Sigma)$. The $G$-action on $N$ naturally induces a $G$-action on $M$ by duality, in such a way that the duality pairing is $G$-invariant. Explicitely, for $m\in M=\Hom(N,\Z)$, and $g\in G$, we have
$$
g\cdot m := m(g^{-1}\,\cdot)
$$
and thus 
$$\langle g\cdot m , g\cdot u \rangle =\langle m , u\rangle$$ for any $u\in N$.
By the characterisation of the weights $(m_i)$'s in Equation (\ref{eq:iltenformula}), we see that this $G$-action preserves the set of weights appearing in the decomposition $V=\bigoplus_{m\in M} V_m$. As $G$ contains a non-trivial cyclic group, there is an element $g\in\Aut(N)$ of order $p$ with no fixed point but $0$. This element generates a cyclic group that acts freely on $\lbrace m_i,\: 1\leq i \leq d \rbrace$ (freeness comes from the fact that the action is free on the whole $M_\R$). Hence  
\begin{equation*}
 \label{eq:weights vanishing}
\sum_{i=1}^d  m_i=\sum_{i'} \sum_{k=0}^{p-1} g^k\cdot m_i'
\end{equation*}
where we picked a single element $m_i'$ in each orbit under this action. As $g\neq \Id$, we have 
$$
\Id + g + \cdots + g^{p-1}=0
$$
so that for each orbit
$$
\sum_{k=0}^{p-1} g^k\cdot m_i'=0.
$$
We then deduce the existence of the required $m'$, and that $W$ is Gorenstein.

We proceed to the proof of the fact that $W$ has terminal singularities. Let 
$$
  \Pi_\sigma:= \left\{ \sum_{\rho\in \sigma(1)} \lambda_\rho u_\rho\:\vert\: \sum_{\rho\in\sigma(1)} \lambda_\rho \leq 1,\: 0\leq \lambda_\rho \leq 1 \right\}.
  $$
  As described above, we may fix a subset of 
  $$\lbrace A(\tilde e_i),\: 1\leq i\leq d \rbrace$$ as a set of generators for the rays in $\sigma(1)$.
  Let $u'\in \Pi_\sigma\cap N'$ given by 
  $$
  u'=\sum_{i=1}^d \lambda_i A(\tilde e_i),
  $$
  with 
  $$0\leq \lambda_i \leq 1,$$
  and 
  $$
  \lambda_1 +\ldots +\lambda_d \leq 1,$$
  assuming $\lambda_i=0$ when $A(\tilde e_i)$ is not in our fixed chosen set of ray generators. Assume that there is $i_0$ with $\lambda_{i_0}\neq 0$. We need to show that $\lambda_{i_0}=1$ and $\lambda_i=0$ for $i\neq i_0$. By assumptions on the $(\lambda_i)$'s, it is enough to show that $\lambda_i\in\Z$ for all $i$. As $u'\in N'$, there exists $\tilde u\in \tilde N$ such that 
  $$
  A (\tilde u )= A(\sum_{i=1}^d \lambda_i \tilde e_i),
$$
and then there is $x\in N_\R$ such that
$$
\tilde u - \sum_{i=1}^d \lambda_i \tilde e_i=B(x)=(B_j(x), B_+(x)),
$$
where we recall that we still denote by $A$ and $B$ their $\R$-linear extensions to $\tilde N_\R$ and $N_\R$.
% We reduced the problem to showing that $x\in N$.
By injectivity of $B_j$ again, we only need to consider the case when $B_j(x)\neq 0$ and $1\leq i_0\leq d_j$. Indeed, if $B_j(x)=0$ then by injectivity $x=0$ and $\tilde u$ being in $\tilde N$ forces the $\lambda_i$'s to be integral, whereas if $\lambda_i=0$ for all $i\leq d_j$, then $B_j(x)\in \tilde N$ (recall that $B_j$ maps to the first $d_j$-coordinates) and so by saturation of $B_j(N)$ we have $x\in N$ and thus $B(x)\in \tilde N$ which together with $\tilde u\in\tilde N$ implies that the $\lambda_i$'s are integral. Hence, we reduced ourselves to study the case when $X=Y_j$. We will use again the explicit descriptions of the $V_j$'s from Example \ref{example:deformation spaces}. For $V_3$, we find the system 
$$
\left\{ 
\begin{array}{ccc}
 \tilde u_1 - \lambda_1 & = & -x_2\\
 \tilde u_2 - \lambda_2 & = & -x_2\\
 \tilde u_3 - \lambda_3 & = & x_1\\
 \tilde u_4 - \lambda_4 & = & x_1\\
 \tilde u_5 - \lambda_5 & = & -x_1+x_2\\
 \tilde u_6 - \lambda_6 & = & -x_1+x_2
\end{array}
\right. 
$$
where $x=(x_1,x_2)\in\R^2=N_\R$ and $\tilde u=(\tilde u_1,\ldots,\tilde u_6)\in \Z^6\simeq \tilde N$ (using the basis $(\tilde e_i)_{1\leq i \leq 6}$),
from which we obtain 
$$
( \lambda_1 + \lambda_3+\lambda_5, \lambda_1 + \lambda_3+\lambda_6,\lambda_1 + \lambda_4+\lambda_6,\lambda_1 + \lambda_4+\lambda_5)\in\N^4
$$
and 
$$
(\lambda_2 + \lambda_3+\lambda_5, \lambda_2 + \lambda_3+\lambda_6,\lambda_2 + \lambda_4+\lambda_6,\lambda_2 + \lambda_4+\lambda_5)\in\N^4,
$$
where we used that the $\tilde u_i$'s are integers and  $\lambda_i\geq 0$ for all $i$.
 Together with
\begin{equation}
 \label{eq:sumbetween}
0 \leq \lambda_1 + \ldots + \lambda_6 \leq 1,
\end{equation}
we deduce that actually
$$
( \lambda_1 + \lambda_3+\lambda_5, \lambda_1 + \lambda_3+\lambda_6,\lambda_1 + \lambda_4+\lambda_6,\lambda_1 + \lambda_4+\lambda_5)\in\lbrace 0,1\rbrace^4
$$
and 
$$
(\lambda_2 + \lambda_3+\lambda_5, \lambda_2 + \lambda_3+\lambda_6,\lambda_2 + \lambda_4+\lambda_6,\lambda_2 + \lambda_4+\lambda_5)\in\lbrace 0,1\rbrace^4.
$$
 Let's then assume that $i_0=1$ to fix the ideas, so $\lambda_1>0$, the other cases being similar. We then must have $\lambda_1 + \lambda_3+\lambda_5=1$, and summing with $\lambda_2 + \lambda_4+\lambda_6$, we see with (\ref{eq:sumbetween}) that $\lambda_2+\lambda_4+\lambda_6=0$. Hence $\lambda_2=\lambda_4=\lambda_6=0$. Then, $\lambda_1+\lambda_4+\lambda_6=\lambda_1>0$ so $\lambda_1=1$. Considering $\lambda_1 + \lambda_3+\lambda_6=1$ and $\lambda_1 + \lambda_4+\lambda_5=1$ yieds to $\lambda_3=\lambda_5=0$, and the result follows in that case. The cases $V_2$ and $V_4$ are similar, so we will only treat $V_4$. In that case, in the coordinates $\tilde x=(\tilde x_1,\ldots,\tilde x_4)\in\R^4\simeq \tilde N_\R$ given by $(\tilde e_1,\ldots,\tilde e_4)$, the map $A$ is given by
 $$
 A(\tilde x_1,\tilde x_2,\tilde x_3,\tilde x_4)=(\tilde x_1+\tilde x_2,\tilde x_3+\tilde x_4).
 $$
 Hence
$$
\left\{ 
\begin{array}{ccc}
 A(\tilde e_1) & = & A(\tilde e_2) \\
 A(\tilde e_3) & = & A(\tilde e_4)
\end{array}
\right. 
$$
so we can pick the ray generators $(A(\tilde e_1), A(\tilde e_3))$ for the two-dimensional cone $\sigma$. The description of $V_4$ then implies
$$
\left\{ 
\begin{array}{ccc}
 \tilde u_1 - \lambda_1 & = & x_1\\
 \tilde u_2  & = & -x_1\\
 \tilde u_3 - \lambda_3 & = & x_2\\
 \tilde u_4  & = & -x_2
\end{array}
\right. 
$$
hence $x\in N$ and the result follows.
  \end{proof}
  
  \subsection{Examples}
  \label{sec:examples}
  We first provide two examples of local moduli spaces and then an example of a quotient $H^1(X,TX)//T$ that is not $\Q$-Gorenstein, for $X$ a smooth toric surface.
  
  \subsubsection{A smooth example : $Y_4$} We consider the toric variety $W_4$ which is the quotient of $V_4$ by $T$ (recall the definition of $Y_4$ in Example \ref{example:deformation spaces}, and that $V_4=H^1(Y_4,TY_4)$). As seen in the proof of Proposition \ref{prop:Wterminal}, the generators of $\sigma$ in that case can be taken to be $A(\tilde e_1)$ and $A(\tilde e_3)$, so that $\sigma$ is isomorphic to the cone
  $$
  \R_+\cdot (1,0)+\R_+\cdot (0,1) \subset \R^2.
  $$
  Then, 
  $$
  W_4\simeq \C^2,
  $$
  and the $G$-action on $V_4$ (see Remark \ref{rem:Gaction}), descends to a $D_1$-action on $W_4\simeq \C^2$ generated by a reflection
  $$
  (x,y)\mapsto (y,x).
  $$
  Hence, we conclude that the local moduli space is modeled on 
  $$
  W_4//G \simeq \C^2.
  $$

  \subsubsection{A singular example : $Y_3$}
  Let's consider now $W_3$, given by the GIT quotient of $V_3$ by $T$. We can pick isomorphisms 
  $
  N\simeq \Z^2,
  $
  $
  \tilde N\simeq \Z^6
  $
  and 
  $$
  N'= \tilde N/N\simeq \Z^4
  $$
  such that the map $B : N\to \tilde N$ is given by 
  $$
  B=\left[ 
  \begin{array}{cc}
   0 & -1 \\
   0 & -1 \\
   1 & 0 \\
   1 & 0 \\
   -1 & 1 \\
   -1 & 1 
  \end{array}
  \right]
  $$
  and the map $A : \tilde N \to N'$ by 
  $$
   A=\left[ 
  \begin{array}{cccccc}
   -1 & 1 & 0 & 0 & 0 & 0 \\
  0 & 0 & -1 & 1 & 0 & 0 \\
   1 & 0 & 1 & 0 & 1 & 0 \\
  1 & 0 & 1 & 0 & 0 & 1 
  \end{array}
  \right].
  $$
  Hence, $W_3$ is the toric variety associated to the cone
  $$
  \sigma = \sum_{i=1}^6 \R_+\cdot  e_i'\subset \R^4
  $$
  where $(e_i')_{1\leq i\leq 6}$ are given by the columns of the matrix $A$. 
%   Using Sage Math, we find 
%   $$
%   W_3\simeq \Spec(\C[z_0,\ldots,z_7]/I_3)
%   $$
%   with $I_3$ the ideal whose set of generators is given by
%   $$
%   \begin{array}{ccc}
%      I_3 & = & \langle -z_1z_4 + z_0z_7, -z_1z_5 + z_0z_6, -z_4z_6 + z_5z_7,\\
%   & & z_0z_4 - z_3z_5, -z_2z_4 + z_3z_7, -z_1z_4 + z_3z_6,\\
%   & & z_0z_2 - z_1z_3, z_2z_6 - z_1z_7, -z_1z_4 + z_2z_5 \rangle.  
%   \end{array}
% $$
  This toric affine variety is singular. It is not even simplicial, as the $(e_i')_{1\leq i\leq 6}$, which are the primitive generators of the six rays in $\sigma(1)$, do not form a $\Q$-basis for $N'_\Q$. The $G$-action descends to a $D_3$-action on $W_3$. Indeed, as explained in the proof of Proposition \ref{prop:Wterminal}, $G\simeq D_3$ acts on the set of weights $(m_i)$'s that appear in the weight space decomposition of $V_3\simeq \C^6$. This action results in permutations of the $\chi^{m_i}$'s, and thus in permutations of the elements of the basis $(\tilde e_i)_{1\leq i\leq 6}$ (we use the notations from the previous proofs of Propositions \ref{prop:Wtoric} and \ref{prop:Wterminal}). Then, this $D_3$-action descends to $N'\simeq \Z^4$, by permutations of the $e_i'$'s. Explicitely, we compute that the  associated representation 
  $$
  D_3\to \mathrm{GL}_4(\Z),
$$ 
is generated by 
$$
\left[ 
  \begin{array}{cccc}
   0 & 0 & -1 & 1 \\
  1 & 0 & 0 & 0 \\
   0 & 0 & 1 & 0  \\
  0 & 1 & 1 & 0 
  \end{array}
  \right] \mathrm{ and } 
  \left[ 
  \begin{array}{cccc}
   -1 & 0 & 0 & 0 \\
  0 & -1 & 0 & 0 \\
   1 & 1 & 0 & 1  \\
  1 & 1 & 1 & 0 
  \end{array}
  \right]
$$
where the first matrix represents an element of order $3$ and the second one a reflection. The final quotient 
$W_3//D_3$ provides a singular example of local moduli space.

  \subsection{A non--Gorenstein example}
We produce here  an example of a toric surface $X$ with 
$\Aut(X)\simeq T$, and $H^1(X,TX)//T$ a non $\Q$-Gorenstein toric variety (see Figure 13). For this, simply blow-up $Y_4$ in a single point to produce the following fan  :
\begin{figure}[htbp]
\label{figXlast}
\psset{unit=0.95cm}
\begin{pspicture}(-3.5,-4.5)(3.5,1)
$$
\xymatrix @M=0mm{
\bullet & \bullet & \bullet & \bullet & \bullet \\
\bullet & \bullet & \bullet & \bullet & \bullet \\
 \bullet & \bullet & \bullet\ar[u]\ar[r]\ar[l]\ar[d]\ar[ru]\ar[ld]\ar[lu]\ar[rd]\ar[ruu]& \bullet & \bullet \\
\bullet & \bullet & \bullet & \bullet & \bullet \\
\bullet & \bullet & \bullet & \bullet & \bullet 
}
$$
\end{pspicture}
\caption{Fan of $X$.}
\end{figure}
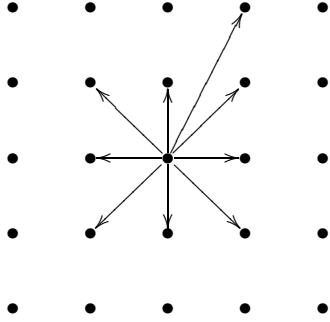
  
  As $X$ is blown-up from $Y_4$, we have $\Aut^0(X)\simeq T$. One can also check that we actually have $\Aut(X)\simeq T$. Moreover, the arguments in Section \ref{sec:toric quotient} go through, and we find that if $X$ were cscK, then its local moduli space would be modeled on the GIT quotient of $H^1(X,TX)$ by $T$. A direct computation again, using the method in Section \ref{sec:deformations of toric surfaces}, provides the weight space decomposition
  $$
  H^1(X,TX)= V^1_{+e_1^*}\oplus V^2_{-e_1^*}\oplus V^1_{+e_2^*}\oplus V^1_{-e_2^*}\oplus V^1_{e_1^*-e_2^*}.
  $$
  Given that the sum of the weights that appear in this decomposition doesn't vanish, the quotient $H^1(X,TX)//T$ is not $\Q$-Gorenstein (see the proof of Proposition \ref{prop:Gorenstein and terminal}).
  
  \section{Discussion and perspectives}
  \label{sec:discuss}
 
 \subsection{Relations between local moduli and Weil--Petersson metrics}
 \label{sec:relate moduli}
 We will discuss in this section the fact that the local moduli spaces we considered are related by toric fibrations. Assume that $\pi : X \to X_0$ is a $G$-equivariant blow-up between two foldable toric surfaces. We keep the notations from previous sections, using the subscript $0$ to refer to the spaces associated to $X_0$. From \cite[Corollary 1.6]{Ilten11}, the corresponding space $V_0$ injects in $V$. It is a straightforward exercise to check that we have the following commutative diagram : 
$$
\begin{array}{ccccccccc}
  0 &\longrightarrow & N &\stackrel{B}{\longrightarrow} &\tilde N &\stackrel{A}{\longrightarrow} & N'& \longrightarrow & 0 \\
   & & \downarrow & &  \downarrow & &  \downarrow & & \\
  0 & \longrightarrow & N  & \stackrel{B_0}{\longrightarrow} & \tilde N_0 &  \stackrel{A_0}{\longrightarrow} & N_0' & \longrightarrow & 0
\end{array}
$$
where the first vertical arrow is the identity and the last two are surjective. By construction, one sees that the surjective map $N'\to N_0'$ is compatible with $\sigma$ and $\sigma_0$, and induces a toric locally trivial fibration $W\to W_0$ whose fiber is itself an affine toric variety. The whole construction is $G$-equivariant, and provides maps between the associated local moduli spaces (up to shrinking the neighborhoods we considered). 

The cscK metric on $X$ lives in the class $\pi^*[\om_0]-\ep E_i $, where $\om_0$ is cscK on $X_0$, $\ep$ is small and the $E_i$'s stand for the exceptional divisors of the blow-up. It would be interesting to understand the behaviour of the associated Weil--Petersson metrics $(\Om^{WP}_{\ep})_{0 < \ep < \ep_0}$ on the local moduli spaces $\cW_\ep\subset W/G$  as constructed in \cite{DN} when $\ep$ goes to zero. It seems natural to  expect that the volume of the fibers of the fibration $W/G\to W_0/G$ go to zero, so that $\cW_\ep$ would converge in Gromov--Hausdorff sense to $\cW_0$.

\subsection{Higher dimensional case}
\label{sec:higher dimension}
Many features that hold for toric surfaces fail in higher dimension. First, even when it is reductive, the identity component of the automorphism group of a non-rigid toric variety will not a priori be isomorphic to the torus (see \cite{Nill}). Then, from dimension $3$, toric varieties may be obstructed (see \cite{IltenTuro}). Finally, the toric MMP (that produces the classification of toric surfaces) produces singular varieties in general. So our main ingredients to prove Theorem \ref{theo:intro} do not generalise, a priori, in higher dimension.

Nevertheless, it would be interesting to study what goes through the new difficulties. We expect  the right extension of the notion of foldable fans in higher dimension to be fans whose lattice automorphism group admit a subgroup that acts with no fixed point. This raises several questions :
\begin{enumerate}
 \item Do all crystallographic groups arise as lattice automorphism groups of smooth complete fans?
 \item Do all foldable toric varieties admit a cscK metric?
 \item Are foldable toric varieties unobstructed?
 \item What are the singularities of the moduli space of cscK metrics around cscK foldable toric varieties?
\end{enumerate}
 
\bibliographystyle{amsplain}
\bibliography{RepDefAbelian}

\end{document}